\numberwithin{equation}{section}
\theoremstyle{plain}
\newtheorem{theorem}{Theorem}[section]
\newtheorem{lemma}[theorem]{Lemma}
\newtheorem{remark}[theorem]{Remark}
\newtheorem{definition}[theorem]{Definition}
\newtheorem{proposition}[theorem]{Proposition}
\begin{document}

\begin{frontmatter}
\title{Stein's method for multivariate Brownian approximations of sums under dependence}
\runtitle{Multivariate Brownian approximations via Stein's method}

\begin{aug}
\author{\fnms{Miko{\l}aj J.} \snm{Kasprzak}\ead[label=e1]{mikolaj.kasprzak@uni.lu}},

\runauthor{Miko{\l}aj J. Kasprzak}

\affiliation{University of Luxembourg}

\address{University of Luxembourg\\
Department of Mathematics\\
Maison du Nombre\\
6 Avenue de la Fonte\\
L-4364 Esch-sur-Alzette\\
Luxembourg\\
\printead{e1}\\
\phantom{E-mail:\ }}
\end{aug}
\begin{abstract}
We use Stein's method to obtain a bound on the distance between scaled $p$-dimensional random walks and a $p$-dimensional (correlated) Brownian motion. We consider dependence schemes including those in which the summands in scaled sums are weakly dependent and their $p$ components are strongly correlated. As an example application, we prove a functional limit theorem for exceedances in an $m$-scans process, together with a bound on the rate of convergence. We also find a bound on the rate of convergence of scaled U-statistics to Brownian motion, representing an example of a sum of strongly dependent terms. 
\end{abstract}

\begin{keyword}[class=MSC]
\kwd[Primary ]{60B10}
\kwd{60F17}
\kwd[; secondary ]{60B12, 60J65, 60E05, 60E15}
\end{keyword}

\begin{keyword}
\kwd{Stein's method}
\kwd{functional convergence}
\kwd{Brownian motion}
\kwd{exceedances of the scans process}
\kwd{U-statistics}
\kwd{\\~\\ © 2020, Elsevier. Licensed under the Creative Commons Attribution-NonCommercial-NoDerivatives 4.0 International http://creativecommons.org/licenses/by-nc-nd/4.0/ }
\end{keyword}
\end{frontmatter}
\section{Introduction}
In the seminal paper \cite{diffusion}, Barbour addressed the problem of providing bounds on the rate of convergence in functional limit results (or invariance principles as they are often called in the literature). He observed that the celebrated Stein's method, first introduced in \cite{stein} as a tool for proving the Central Limit Theorem, may also be used in  the setup of the \textit{Functional} Central Limit Theorem. This theorem, whose early versions are attributed to Donsker \cite{donsker}, says that for a sequence of i.i.d. real random variables $(X_n)_{n=1}^{\infty}$ with mean zero and unit variance, the random process
\begin{equation}\label{int}
\mathbf{Y}_n(t)= n^{-1/2}\sum_{i=1}^{\lfloor nt\rfloor}X_i,\quad t\in[0,1]
\end{equation}
converges in distribution to the standard Brownian motion with respect to the Skorokhod topology. 

Through a careful and technical adaptation of Stein's method to the framework of Brownian-motion approximation and a subsequent repetitive use of Taylor's theorem, Barbour \cite{diffusion} proved a powerful estimate on a distance between the law of $\mathbf{Y}_n$ in (\ref{int}) and the Wiener measure. Specifically, he considered test functions $g$ acting on the Skorokhod space $D \left( [0,1], \mathbb{{R}} \right)$ of c\`adl\`ag real-valued maps on $[0,1]$, such that $g$ takes values in the reals, does not grow faster than a cubic, is twice Fr\'echet differentiable and its second derivative is Lipschitz. Denoting by $\mathbf{Z}$ the Brownian motion on $[0,1]$ and adopting the notation of (\ref{int}), his result says that
$$|\mathbb{E}g(\mathbf{Y}_n)-\mathbb{E}g(\mathbf{Z})|\leq C_g\frac{\mathbb{E}|X_1|^3+\sqrt{\log n}}{\sqrt{n}},$$
where $C_g$ is a constant, independent of $n$, yet depending on the (carefully defined) \textit{smoothness properties} of $g$. Among the applications and extensions considered by Barbour are an analysis of the empirical distribution function of i.i.d. random variables and the Wald-Wolfowitz theorem often used to construct tests in non-parametric statistics \cite{wald1940}.

Our aim in this paper is to extend the results of \cite{diffusion} to approximations of scaled sums of univariate and \textit{multivariate} random variables with different \textit{dependence structures} by univariate and \textit{multivariate} Wiener processes.
\subsection{Motivation}
Functional limit results play an important role in applied fields. Researchers often choose to model discrete phenomena with continuous processes arising as scaling limits of discrete ones. The reason is that those scaling limits may be studied using stochastic analysis and are more robust to changes in local details. Questions about the rate of convergence in functional limit results are equivalent to ones about the error those researchers make when doing so. Obtaining bounds on a certain distance between the scaled discrete and the limiting continuous processes provides a way of quantifying this error.

Our motivation in this paper comes from the desire to fill in a gap in the theory but we are also motivated by examples related to applications.

One of those, studied in the example in Section \ref{ex_exceedances} of this paper, considers \textit{exceedances of the $m$-scans process}. For a sequence of i.i.d. random variables $X_1,X_2,\dots$, the one-dimensional $m$-scans process is given by $R_i=\sum_{k=0}^{m-1}X_{i+k}$. The number of its exceedances of a real number $a$ is given by 
$$Y=\sum_{i=1}^n\mathbb{1}[R_i>a].$$
As noted in \cite[Example 9.2]{normal_approx}, this statistic has been studied by many authors, including \cite{glaz} and \cite{naus}. It is of high importance in many areas of applied statistics and has been used, for instance, to evaluate the significance of observed inhomogeneities in the distribution of markers along the length of long DNA sequences (see \cite{dembo_karlin, karlub_brede}). $Y$ may be normalized and centralized and then shown to converge in distribution to the standard normal law. Berry-Esseen bounds on the rate of this convergence have been found in \cite[Theorem 4.1]{dembo_rinott} and \cite[Example 9.2]{normal_approx}. We are interested in studying the functional convergence of a multidimensional version of $Y$.

Another example concerns \textit{bivariate U-statistics} and is treated in Theorem \ref{theorem_u_stats} of this paper. Bivariate U-statistics are defined to be random variables of the form:
$$S_n^2(h)=\sum_{1\leq i_1<i_2\leq n} h(X_{i_1},X_{i_2}),\quad n\geq 1$$
for a symmetric real (or complex) function $h$ on $\mathcal{S}^2$ (where $\mathcal{S}$ is some measurable space) and a sequence of i.i.d. random variables $(X_i)_{i\geq 1}$ taking values in $\mathcal{S}$. Because of their appealing properties, they are central objects in the field of Mathematical Statistics, as described in \cite{encyclopedia} and many commonly used statistics can be expressed in terms of certain U-statistics or approximated by them. They also appear in decompositions of more general statistics into sums of terms of a simpler form (see, e.g. \cite[Chapter 6]{serfling} or \cite{rubin_vitale} and \cite{vitale}) and play an important role in the study of random fields (see, e.g. \cite[Chapter 4]{christofides}). The appealing properties of \textit{non-degenerate bivariate U-statistics}, i.e. those such that, for
$$w(x)=\mathbb{E}h(x,X_1),$$
$0<\text{Var}[w(X_1)]<\infty$, include their asymptotic behaviour. It can be described by a Strong Law of Large Numbers (\cite{hoeffding2}), a central limit theorem (\cite{hoeffding}) or the functional central limit theorem (e.g. \cite[Chapter XI]{hilbert}), which will be studied in this paper. Other interesting results include those connected to large deviations for U-statistics (see \cite{lowe}), Berry-Esseen-type bounds (see \citep{chen_shao1}) and other bounds on the speed of convergence in the U-statistic CLT (see \cite{rinott}). \textit{Degenerate U-statistics} have also received much attention in the recent years with \cite{dobler2017} providing bounds on the speed of convergence in de Jong's theorem \cite{de_jong} and proving its multidimensional version. 

%Another example, treated in Theorem \ref{theorem_inflow}, considers an "inflow process on a graph", where each of the edges of an $r$-regular graph carries an independent compensated Poisson process and for each vertex we sum the processes connected with that vertex in order to compare the joint distribution of $p$ of those sums with a $p$-dimensional Brownian Motion. A similar setup has been looked at in \cite[Proposition 2.1]{diaconis_evans}, yet the edges and vertices there carry one-dimensional random variables and the graph is not necessarily regular. The authors use this construction to construct a Gaussian field with a given covariance. A similar dependency graph, with random variables attached to its edges, is also considered in \cite[Subsection 6.5.1]{holmes_reinert} in the study of bootstrap for dependent random variables and in the process of simulating a particular dependence structure between them. Our process, considered in Theorem \ref{theorem_inflow} has a particular interpretation as a model for traffic passing between nodes in a network.

Our theoretical motivation is expressed in Proposition \ref{local1} of this paper. It seems natural to ask whether techniques similar to those of \cite{diffusion} may be used to study a process of the form
\begin{equation}\label{int_proc}
t\mapsto n^{-1/2}\sum_{i=1}^{\lfloor nt\rfloor}X_i,\quad t\in[0,1]
\end{equation}
where $\lbrace X_i:i=1,\dots,n\rbrace$ is a collection of i.i.d. random vectors in $\mathbb{R}^p$ for $p>1$ with a given covariance matrix $\Sigma$. Interesting questions arising include those about the rate of convergence of the process in (\ref{int_proc}) to the correlated $p$-dimensional Brownian motion created from a standard Brownian motion $\mathbf{B}$ by premultiplying it by $\Sigma^{1/2}$. In this context, the role played by $\Sigma$ in the quality of this approximation seems worth paying attention to. 
\subsection{Contribution of the paper}
The main achievements of the paper are the following:
\begin{enumerate}[label=(\alph*)]
\item A very general result providing a bound on the distance between a process of the form
$$\mathbf{Y}_n(t)=\left(\sum_{i=1}^{\lambda_1}X_{i,1}J_{i,1}(t),\dots,\sum_{i=1}^{\lambda_p}X_{i,p}J_{i,p}(t)\right),\quad t\in[0,1],$$
where:
\begin{itemize}
\item the numbers $\lambda_j$ are such that $\lambda_j\leq n$;
\item $p$ is a fixed positive integer;
\item the collection of vectors $X_i=(X_{i,1},\dots,X_{i,p})$ for $i=1,\dots, n$ is allowed to be \textit{dependent} and those vectors themselves are allowed to have non-identity covariance matrices;
\item the collection of (possibly random) functions 
$$\left\lbrace J_{i,k}\in D([0,1],\mathbb{R}):i=1,\dots n, k=1,\dots, p\right\rbrace$$
is independent of the collection  of vectors $(X_i)_{i=1}^n$ from the previous point;
\end{itemize}
and a correlated $p$-dimensional Brownian motion. The bound is presented in Theorem \ref{local3} and provides a substantial extension of the result of \cite{diffusion}, which bounds the rate of convergence in the classical, one-dimensional Donsker's invariance principle.
%\item A result similar to Theorem \ref{local3}, improving the bound therein for piecewise constant processes $\mathbf{Y}_n$ at the cost of restricting the class of test functions with respect to which the bound holds. This is provided in Proposition \ref{proposition_weak}. 
\item A novel functional central limit theorem involving the number of exceedances in the multidimensional $m$-scans process, together with bounds on the rate of convergence, presented in the example in Section \ref{ex_exceedances}.
\item A novel bound on the rate of convergence in the functional central limit theorem for non-degenerate, bivariate U-statistics (for a classical proof of the theorem see, for instance, \cite{hall}), which is presented in Theorem \ref{theorem_u_stats}.
%\item A bound on the rate of convergence of a certain \textit{inflow process} on a graph derived in Theorem \ref{theorem_inflow} as an application of Theorem \ref{local3}.
\item A technical result, presented in Proposition \ref{prop_m}, showing that our bounds' converging to zero implies weak convergence of the underlying processes with respect to the Skorokhod and uniform topologies. This result is a direct extension of \cite[Proposition 3.1]{functional_combinatorial} to the multidimensional setting.
\end{enumerate}
We provide explicit values for all the constants appearing in our bounds. To our best knowledge, none of the authors who have considered functional approximations with Stein's method so far has done so. We do it as we hope that this will make our results more powerful when used in applications.

The technique which is central in obtaining all the bounds is Stein's method.
\subsection{Stein's method for distributional approximation}
In \cite{stein} it is observed that a random variable $Z$ has standard normal law if and only if $\mathbb{E}Zf(Z)=\mathbb{E}f'(Z)$ for all smooth functions $f$. Therefore, if, for a random variable $W$ with mean zero and unit variance, $\mathbb{E}f'(W)-\mathbb{E}Wf(W)$ is close to zero for a large class of functions $f$, then the law of $W$ should be approximately Gaussian. This leads to a method of bounding the speed of convergence to the normal distribution. Instead of evaluating $|\mathbb{E}h(W)-\mathbb{E}h(Z)|$ directly for a given function $h$, one can first find an $f=f_h$ solving the following \textit{Stein equation}:
$$f'(w)-wf(w)=h(w)-\mathbb{E}h(Z)$$
and then find a bound on $|\mathbb{E}f'(W)-\mathbb{E}Wf(W)|$. This approach, called \textit{Stein's method}, often turns out to be surprisingly easy and has also proved to be useful for approximations by distributions other than normal.

The aim of the generalised version of Stein's method is to find a bound for the quantity $|\mathbb{E}_{\nu_n}h-\mathbb{E}_{\mu}h|$, where $\mu$ is the target (known) distribution, $\nu_n$ is the approximating law and $h$ is chosen from a suitable class of real-valued test functions $\mathcal{H}$. The procedure can be described in terms of three steps. First, an operator $\mathcal{A}$ acting on a class of real-valued functions is sought, such that $$\left(\forall f\in\text{Domain}(\mathcal{A})\quad\mathbb{E}_{\nu}\mathcal{A}f=0\right)\quad \Longleftrightarrow \quad\nu=\mu,$$
where $\mu$ is the target distribution. Then, for a given function $h\in\mathcal{H}$, the Stein equation
$$\mathcal{A}f=h-\mathbb{E}_{\mu}h$$
has to be solved. Finally, using properties of the solution and various mathematical tools (among which the most popular are Taylor's expansions in the continuous case, Malliavin calculus, as described in \cite{nourdin}, and coupling methods), an explicit bound is sought for the quantity $|\mathbb{E}_{\nu_n}\mathcal{A}f_h|$.

An accessible account of the method can be found, for example, in the surveys \cite{reinert} and \cite{ross} as well as the books \cite{janson} and \cite{normal_approx}, which treat the cases of Poisson and normal approximation, respectively, in detail. The reference \cite{swan} is a database of information and publications connected to Stein's method.

Approximations by laws of diffusion processes have not been covered in the Stein's method literature very widely, with the notable exceptions of \cite{diffusion, functional_combinatorial, shih, Coutin} and recently \cite{decreusefond2, kasprzak1, kasprzak3}. Our aim in this paper is to develop it in a direction not previously explored by other authors while completely natural given the direction in which the finite-dimensional Stein's method literature has evolved.

\subsection{Structure of the paper}
In Section \ref{section 2} we define the spaces of test functions we will be working with and the corresponding norms which will appear in the bounds. We also present Proposition \ref{prop_m} giving circumstances under which the bounds obtained later in the paper converging to zero imply weak convergence of the considered probability distributions. Section \ref{section3} gives statements of the main results of the paper, mentioned above. Section \ref{ex_exceedances} presents the example concerning exceedances of an $m$-scans process. Section \ref{section5} contains all the proofs preceded by finding the Stein equation for approximation by the law of interest, solving it and examining properties of the solutions. In the appendix we present the proof of the aforementioned Proposition \ref{prop_m}.

\section{Notation and spaces $M$, $M^1$, $M^2$ and $M^0$}\label{section 2}
The following notation is used throughout the paper. For a function $w$ defined on the interval $[0,1]$ and taking values in a Euclidean space, we define 
$$\|w\|=\sup_{t\in[0,1]}|w(t)|,$$ 
where $|\cdot|$ denotes the Euclidean norm. We also let $p$ be an integer such that $p\geq 1$ and $D^p=D([0,1],\mathbb{R}^p)$ be the Skorokhod space of all c\`adl\`ag functions on $[0,1]$ taking values in $\mathbb{R}^p$. In the literature, this space is usually equipped with the \textit{Skorokhod topology} generated by the \textit{Skorokhod metric} $\sigma$ given by
$$\sigma(w,v)=\inf_{\lambda\in\Lambda}\max\lbrace\|\lambda-I\|,\|w-v\circ\lambda\|\rbrace,$$
where $I$ is the identity function and $\Lambda$ is the set of all strictly increasing continuous bijections on $[0,1]$. We will most often consider the topology generated by the supremum norm, though. 

In the sequel, for $i=1,\dots,p$, $e_i$ will denote the $i$th unit vector of the canonical basis of $\mathbb{R}^p$ and the $i$th component of $x\in\mathbb{R}^p$ will be represented by $x^{(i)}$, i.e. $x=\left(x^{(1)},\dots,x^{(p)}\right)$.

Let $p\in\mathbb{N}$. Let us define:
$$\|f\|_L:=\sup_{w\in D^p}\frac{|f(w)|}{1+\|w\|^3}\text{,}$$
and let $L$ be the Banach space of continuous functions $f:D^p\to\mathbb{R}$ such that $\|f\|_L<\infty$. Following \cite{diffusion}, we now define $M\subset L$ to be the set of the twice Fr\'echet differentiable functions $f$, such that:
\begin{equation}\label{space_m}
\|D^2f(w+h)-D^2f(w)\|\leq k_f\|h\|\text{,}
\end{equation}
for some constant $k_f$, uniformly in $w,h\in D^p$. By $D^kf$ we mean the $k$-th Fr\'echet derivative of $f$ and the norm of $k$-linear form $B$ on $L$ is defined to be 
$$\|B\|=\sup_{\|h_i\|\leq 1\,\forall i=1,\dots,k} |B[h_1,...,h_k]|,$$ 
where $$B[h_1,...,h_k]$$ denotes $B$ applied to arguments $h_1,\dots,h_k$. Note the following lemma, which can be proved in an analogous way to that used to show (2.6) and (2.7) of \cite{diffusion}. We omit the proof here. 
\begin{lemma}\label{first_der}
For every $g\in M$, let:
\begin{align*}
\|g\|_M:=&\sup_{w\in D^p}\frac{|g(w)|}{1+\|w\|^3}+\sup_{w\in D^p}\frac{\|Dg(w)\|}{1+\|w\|^2}+\sup_{w\in D^p}\frac{\|D^2g(w)\|}{1+\|w\|}\\
&+\sup_{w,h\in D^p}\frac{\|D^2g(w+h)-D^2g(w)\|}{\|h\|}.
\end{align*}
Then, for all $g\in M$, we have $\|g\|_M<\infty$.
\end{lemma}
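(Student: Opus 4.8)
\emph{Proof strategy.} The plan is to bound, one at a time, the four suprema that make up $\|g\|_M$. The first of them, $\sup_{w}|g(w)|/(1+\|w\|^3)$, is precisely $\|g\|_L$, which is finite because $g\in M\subset L$; the fourth one, $\sup_{w,h}\|D^2g(w+h)-D^2g(w)\|/\|h\|$, is bounded by the constant $k_g$ appearing in \eqref{space_m}. So the real content of the lemma is the assertion that $\|Dg(w)\|$ grows no faster than $\|w\|^2$ and that $\|D^2g(w)\|$ grows no faster than $\|w\|$.

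For the second derivative I would simply apply \eqref{space_m} with the increment there equal to $w$ and the base point equal to $0$, which gives $\|D^2g(w)\|\le\|D^2g(0)\|+k_g\|w\|$; here $\|D^2g(0)\|<\infty$ because $g$ is twice Fréchet differentiable, so $D^2g(0)$ is a bounded bilinear form. Dividing by $1+\|w\|$ then yields $\sup_w\|D^2g(w)\|/(1+\|w\|)\le\|D^2g(0)\|+k_g<\infty$.

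For the first derivative I would run the Taylor argument used to establish (2.6)--(2.7) of \cite{diffusion}. By Taylor's theorem with integral remainder, for all $w,h\in D^p$,
$$g(w+h)-g(w)-Dg(w)[h]-\tfrac12 D^2g(w)[h,h]=\int_0^1(1-t)\bigl(D^2g(w+th)-D^2g(w)\bigr)[h,h]\,dt,$$
and the right-hand side has absolute value at most $\tfrac{k_g}{6}\|h\|^3$ by \eqref{space_m}. Fixing a unit vector $h$ and a scalar $s>0$, applying this with $h$ replaced by $sh$ and rearranging gives
$$|Dg(w)[h]|=\tfrac1s\bigl|Dg(w)[sh]\bigr|\le\tfrac1s\Bigl(|g(w+sh)|+|g(w)|+\tfrac{s^2}{2}\|D^2g(w)\|+\tfrac{k_g s^3}{6}\Bigr).$$
I would then bound $|g(w)|$ and $|g(w+sh)|$ by $\|g\|_L(1+\|\cdot\|^3)$, use $\|w+sh\|\le\|w\|+s$ together with the linear-growth bound on $\|D^2g(w)\|$ from the previous step, and finally take $s=1+\|w\|$, so that $\|w\|+s\le 2s$ and $\|w\|\le s\le s^2$. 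Every term on the right then becomes at most a fixed multiple of $s^2=(1+\|w\|)^2$, and since $(1+\|w\|)^2\le 2(1+\|w\|^2)$ this gives $\sup_w\|Dg(w)\|/(1+\|w\|^2)<\infty$, with an explicit constant assembled from $\|g\|_L$, $\|D^2g(0)\|$ and $k_g$. Summing the four bounds yields $\|g\|_M<\infty$.

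The only delicate point is this last computation. It is entirely elementary, but one has to choose the scale $s$ in the Taylor expansion correctly so that each term carries exactly the power $\|w\|^2$ and no more, and to keep careful track of the numerical constants through the triangle-inequality bookkeeping — which matters here because the paper aims to make all such constants explicit.
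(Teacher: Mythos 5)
Your proof is correct and follows exactly the route the paper indicates, namely the Taylor-expansion argument behind (2.6)--(2.7) of \cite{diffusion}: the first and fourth suprema are immediate from $g\in L$ and \eqref{space_m}, the Lipschitz property of $D^2g$ anchored at $0$ gives the linear growth of $\|D^2g(w)\|$, and the second-order Taylor expansion with the scale $s=1+\|w\|$ gives the quadratic growth of $\|Dg(w)\|$. The paper omits this proof entirely, so your write-up supplies the details consistently with its cited source; no gaps.
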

For future reference, we let $ M^1\subset M$ be the class of functionals $g\in M$ such that:
\begin{align}
\|g\|_{M^1}:=&\sup_{w\in D^p}\frac{|g(w)|}{1+\|w\|^3}+\sup_{w\in D^p}\|Dg(w)\|+\sup_{w\in D^p}\|D^2g(w)\|\nonumber\\
&+\sup_{w,h\in D^p}\frac{\|D^2g(w+h)-D^2g(w)\|}{\|h\|}<\infty\text{.}\label{m_1}
\end{align}
and $ M^2\subset M$ be the class of functionals $g\in M$ such that:
\begin{align}
\|g\|_{M^2}:=&\sup_{w\in D^p}\frac{|g(w)|}{1+\|w\|^3}+\sup_{w\in D^p}\frac{\|Dg(w)\|}{1+\|w\|}+\sup_{w\in D^p}\frac{\|D^2g(w)\|}{1+\|w\|}\nonumber\\
&+\sup_{w,h\in D^p}\frac{\|D^2g(w+h)-D^2g(w)\|}{\|h\|}<\infty\text{.}\label{m_2}
\end{align}
We also let $M^0$ be the class of functionals $g\in M$ such that:
\begin{align}
\|g\|_{M^0}:=&\sup_{w\in D^p}|g(w)|+\sup_{w\in D^p}\|Dg(w)\|+\sup_{w\in D^p}\|D^2g(w)\|\nonumber\\
&+\sup_{w,h\in D^p}\frac{\|D^2g(w+h)-D^2g(w)\|}{\|h\|}<\infty\text{.}\nonumber
\end{align}
We note that $M^0\subset M^1\subset M^2\subset M$.  We shall refer to those different classes of functions in the results presented in the remainder of this paper. In each case we aim to obtain our bounds for the largest possible class, yet it is not always possible to do so for class $M$ or even $M^2$. Hence, the introduction of the above presented restrictions of $M$ is necessary for a recovery of the full strength of our results.

The next proposition is a $p$-dimensional version of \cite[Proposition 3.1]{functional_combinatorial} and shows conditions, under which convergence of the sequence of expectations of a functional $g$ under the approximating measures to the expectation of $g$ under the target measure for all $g\in M^0$ implies weak convergence of the measures of interest. The proposition will be later used to conclude weak convergence from bounds derived in the theorems of the next section. Its proof can be found in the Appendix.
\begin{definition}
$Y\in D\left([0,1],\mathbb{R}^p\right)$ is called piecewise constant if $[0,1]$ can be divided into intervals of constancy $[a_k,a_{k+1})$ such that $(Y(t_1)-Y(t_2))=0$ for all $t_1,t_2\in[a_k,a_{k+1})$.
\end{definition}

\begin{proposition}\label{prop_m}
Suppose that, for each $n\geq 1$, the random element $\mathbf{Y}_n$ of $D^p$ is piecewise constant and let $r_n>0$ be such that the intervals of constancy are of length at least $r_n$. Let $\left(\mathbf{Z}_n\right)_{n\geq 1}$ be random elements of $D^p$ converging in distribution in $D^p$, with respect to the Skorokhod topology, to a random element $\mathbf{Z}\in C\left([0,1],\mathbb{R}^p\right)\subset D^p$. If there exists a sequence $(\kappa_n)_{n\geq 1}$ such that $\kappa_n\log^2(1/r_n)\xrightarrow{n\to\infty}0$ and
\begin{equation}\label{assumption}
|\mathbb{E}g(\mathbf{Y}_n)-\mathbb{E}g(\mathbf{Z}_n)|\leq C\kappa_n\|g\|_{M^0}
\end{equation}
for each $g\in M^0$ then $\mathbf{Y}_n\Rightarrow \mathbf{Z}$ (converges weakly) in $D^p$, in both the uniform and the Skorokhod topology.
\end{proposition}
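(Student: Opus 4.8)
The plan is to follow the one‑dimensional argument of \cite[Proposition 3.1]{functional_combinatorial} and to adapt each step to the $p$‑dimensional path space. I would deduce the claimed weak convergence from two ingredients: (a) convergence of the finite‑dimensional distributions of $\mathbf{Y}_n$ to those of $\mathbf{Z}$, and (b) tightness of $\{\mathbf{Y}_n\}_{n\geq1}$ in $D^p$ with the Skorokhod topology. Since $\mathbf{Z}$ has sample paths in $C([0,1],\mathbbm{R}^p)$, (a) and (b) together already give $\mathbf{Y}_n\Rightarrow\mathbf{Z}$ in the Skorokhod topology, and the upgrade to the uniform topology is then automatic: on $C([0,1],\mathbbm{R}^p)$ the Skorokhod and uniform topologies coincide, a sequence in $D^p$ converging in the Skorokhod sense to a continuous limit converges to it uniformly, and hence by the Skorokhod representation theorem (valid since $D^p$ with the Skorokhod topology is Polish) $\mathbf{Y}_n\Rightarrow\mathbf{Z}$ also in the uniform topology. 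Throughout I use that the hypothesis forces $\tau_n\to0$.

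For (a): fix $0\leq t_1<\dots<t_m\leq1$ and $\varphi\in C^3_b\big((\mathbbm{R}^p)^m\big)$ and put $g(w)=\varphi\big(w(t_1),\dots,w(t_m)\big)$. Each evaluation map $w\mapsto w(t_j)$ is linear with operator norm at most $1$ for $\|\cdot\|$, so $g$ is twice Fr\'echet differentiable, its derivatives are bounded, and $D^2g$ is Lipschitz; thus $g\in M^0$ with $\|g\|_{M^0}$ bounded by a constant depending only on $\varphi$ and $m$, not on $n$. Hypothesis \eqref{assumption} then gives $\mathbbm{E}\varphi\big(\mathbf{Y}_n(t_1),\dots,\mathbf{Y}_n(t_m)\big)-\mathbbm{E}\varphi\big(\mathbf{Z}_n(t_1),\dots,\mathbf{Z}_n(t_m)\big)\to0$. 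Since every $t_j$ is a continuity point of $\mathbf{Z}$, one has $\big(\mathbf{Z}_n(t_1),\dots,\mathbf{Z}_n(t_m)\big)\Rightarrow\big(\mathbf{Z}(t_1),\dots,\mathbf{Z}(t_m)\big)$, and as $C^3_b$ is convergence determining on $(\mathbbm{R}^p)^m$ this yields convergence of the finite‑dimensional distributions of $\mathbf{Y}_n$ to those of $\mathbf{Z}$.

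For (b): the key observation is that, since $\mathbf{Y}_n$ is piecewise constant with intervals of constancy of length at least $r_n$, on any grid $G_n\subset[0,1]$ of mesh $r_n/2$ (so $|G_n|=:N_n=O(1/r_n)$) the restriction of $\mathbf{Y}_n$ to each grid cell takes only the two values attained at its endpoints; hence $\|\mathbf{Y}_n\|=\max_{t\in G_n}|\mathbf{Y}_n(t)|$, while the c\`adl\`ag modulus $w'(\mathbf{Y}_n,\delta)$ of Billingsley vanishes for $\delta<r_n$ and, for $\delta\geq r_n$, is at most $\max\{|\mathbf{Y}_n(u)-\mathbf{Y}_n(v)|:u,v\in G_n,\ |u-v|\leq5\delta\}$. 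Both conditions of Billingsley's tightness criterion in $D^p$ are thereby reduced to estimates on the finitely many coordinates $(\mathbf{Y}_n(t))_{t\in G_n}$, which I would transfer from the corresponding estimates for $\mathbf{Z}_n$ — its supremum being tight and its modulus being controlled uniformly in $n$ because $\mathbf{Z}_n\Rightarrow\mathbf{Z}$ with $\mathbf{Z}$ continuous — by interposing a functional $g\in M^0$ given by a smooth cut‑off composed with a log‑sum‑exp soft maximum over the $O(N_n^2)$ quantities $|w(t)|^2$, $t\in G_n$, respectively $|w(u)-w(v)|^2$, $u,v\in G_n$. Choosing the temperature of the soft maximum proportional to $\log N_n$, so that it resolves the relevant thresholds with an $O(1)$ margin, forces $\|g\|_{M^0}=O(\log^2N_n)=O(\log^2(1/r_n))$, whence the error introduced by \eqref{assumption} is $O(\tau_n\log^2(1/r_n))\to0$; this is exactly where the hypothesis $\tau_n\log^2(1/r_n)\to0$ is used.

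I expect step (b), and within it the construction of the soft‑maximum functionals, to be the main obstacle. One must verify that composing a smooth cut‑off with the (non‑smooth at the origin, but quadratically smooth) functions $w\mapsto|w(t)|^2$ and $w\mapsto|w(u)-w(v)|^2$ keeps all three derivative bounds uniform in $w$ — this works because the outer cut‑off localises $w$ to a bounded set on the support of its derivative — and that the second Fr\'echet derivative contributes one power of $\log N_n$ and its Lipschitz constant a further power, accounting precisely for the two logarithms in the hypothesis. The remaining pieces — that convergence to a continuous limit in the Skorokhod topology upgrades to the uniform topology, that $\mathbf{Z}_n$ inherits a continuous‑type modulus bound from its weak convergence, and that Billingsley's tightness criterion holds verbatim in $D^p$ — are standard and need only routine adaptation from the one‑dimensional case of \cite{functional_combinatorial}.
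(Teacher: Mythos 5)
Your proposal is correct in outline but follows a genuinely different architecture from the paper's proof. The paper does not go through finite-dimensional distributions and tightness at all: it fixes a finite intersection $B=\bigcap_{l}B_l$ of sup-norm balls around continuous centres $s_l$ (a convergence-determining class, exploited via the Skorokhod representation of $\mathbf{Z}_n\to\mathbf{Z}$ and Fatou's lemma), builds smooth upper and lower approximations of $\mathbbm{1}_{B_l}$ by composing a cut-off with the $L^{p_n}$-norm of $\sqrt{(\epsilon\gamma_l)^2+|w-s_l|^2}$, proves in Lemma \ref{lemma413} that these have $M^0$-norm $O(p_n^2\eta_n^{-3})$, and uses the piecewise constancy through the observation that $\|\mathbf{Y}_n-s_l\|\geq\gamma_l$ forces the exceedance set to have Lebesgue measure at least $\delta\wedge(r_n/2)$, so the $L^{p_n}$ soft maximum detects it once $p_n\gg\log(1/r_n)$ --- which is exactly where the hypothesis $\tau_n\log^2(1/r_n)\to0$ enters. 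Your route instead splits the work into fidi convergence (where the test functionals have $n$-independent $M^0$-norm, so only $\tau_n\to0$ is used) and C-tightness (where the $\log^2(1/r_n)$ budget is spent on a log-sum-exp soft maximum over the $O(1/r_n)$ grid values to which piecewise constancy reduces the supremum and the modulus). Both proofs hinge on the same phenomenon --- a smooth soft maximum over $N$ path functionals costs $\log^2 N$ in the $M^0$-norm, one logarithm from $\|D^2\|$ and a second from its Lipschitz constant --- so your temperature/derivative bookkeeping is the exact analogue of Steps 1--3 in the proof of Lemma \ref{lemma413} and would need to be written out at the same level of detail (including the localisation by the outer cut-off, which, as you correctly note, is what keeps the derivatives of the inner quadratics $|w(t)|^2$ uniformly bounded). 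What your approach buys is modularity and a transparent accounting of where the hypothesis is actually needed (only for tightness); what the paper's approach buys is that it lands directly on convergence of $\mathbbm{P}[\mathbf{Y}_n\in B]$ for uniform-topology balls, so the ``uniform topology'' half of the conclusion is immediate rather than obtained through the standard but measure-theoretically slightly delicate upgrade from Skorokhod convergence to a continuous limit.
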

\begin{remark}
The formulation of Proposition \ref{prop_m} is almost identical to that of \cite[Proposition 3.1]{functional_combinatorial} with the only difference being that $\mathbf{Y}_n$ and $\mathbf{Z}_n$ are allowed to be $p$-dimensional for $p>1$. For completeness, the appendix contains a more detailed proof than the one presented in \cite{functional_combinatorial}, which may be used by the reader to derive extensions or other versions of the result.
\end{remark}

\section{Main results}\label{section3}
\subsection{Scaled sum of dependent vectors with dependent components}
Theorem \ref{local3} below studies a scaled sum of locally dependent terms whose components are (strongly) dependent. It bounds the error on its approximation by a correlated Brownian motion for test functions in $M^1$.
\begin{theorem}[Dependent components and locally dependent summands]\label{local3}
Let $n$ and $p$ be positive integers. Consider an array of mean-zero random variables 
$$\lbrace X_{i,j}:i=1,...,n,j=1,...,p\rbrace,$$ 
with a positive definite covariance matrix $\tilde{\Sigma}_n$. Let
\begin{enumerate}[label=(\alph*)]
\item $\lambda_j\leq n$, for $j=1,\dots p$, be deterministic positive integers;
\item $\mathbb{A}_i\subset\lbrace 1,2,...,n\rbrace$, for $i=1,\dots n$ be a set such that $X_i=\left(X_{i,1},\dots,X_{i,p}\right)$ is independent of $\left\lbrace X_{j}:j\in \mathbb{A}_i^c\right\rbrace$;
\item $\mathbb{A}_{ij}\subset\lbrace 1,\dots,n\rbrace$, for $i,j=1,\dots, n$ be a set such that $(X_i,X_j)$ and $\lbrace X_k:k\in A_{ij}^c\rbrace$ are independent.
\item  $J_{i,k}\in D\left([0,1],\mathbb{R}\right)$ for $i=1,\dots, n$ and  $k=1,\dots, p$, be (possibly random) functions, independent of the family $\lbrace X_{i,k}:\,i=1,\dots,n,\,k=1,\dots,p\rbrace$.
\end{enumerate}
Assume that:
\begin{equation*}
\underset{k_1,k_2,k_3\in\lbrace 1,\dots,p\rbrace}{\sup_{i_1,i_2,i_3\in\lbrace 1,\dots, n\rbrace}}\mathbb{E}\left[\|J_{i_1,k_1}\|\|J_{i_2,k_2}\|\|J_{i_3,k_3}\|\right]<\infty.
\end{equation*}
Let
$$\mathbf{Y}_n(t)=\left(\sum_{i=1}^{\lambda_1}X_{i,1}J_{i,1}(t),\dots,\sum_{i=1}^{\lambda_p}X_{i,p}J_{i,p}(t)\right),\quad t\in[0,1].$$
Furthermore, for a standard $p$-dimensional Brownian motion $\mathbf{B}$ and a positive definite covariance matrix $\Sigma\in\mathbb{R}^{p\times p}$, let $\mathbf{Z}=\Sigma^{1/2}\mathbf{B}
$.
Then, for any $g\in M^1$, as defined by (\ref{m_1}):
$$|\mathbb{E}g(\mathbf{\mathbf{Y}}_n)-\mathbb{E}g(\mathbf{Z})|\leq\|g\|_{M^1} \sum_{i=1}^7 \epsilon_i,$$
where:

\begin{align*}
&\epsilon_1=\frac{1}{6}\sum_{i=1}^n\mathbb{E}\left\lbrace\left(\sum_{k,l,m=1}^p\left[\left(X_{i,k}\right)^2\|J_{i,k}\|^2\mathbb{1}_{[1,\lambda_k]}(i)\left(\sum_{j\in \mathbb{A}_i}X_{j,l}\|J_{j,l}\|\mathbb{1}_{[1,\lambda_l]}(j)\right)^2\right.\right.\right.\nonumber\\
&\left.\left.\left.\phantom{\frac{\|g\|_{M}}{3}\sum_{i=1}^n\mathbb{E}\lbrace}\cdot\left(\sum_{j\in \mathbb{A}_i}X_{j,m}\|J_{j,m}\|\mathbb{1}_{[1,\lambda_m]}(j)\right)^2\right]\right)^{1/2}\right\rbrace;\\
&\epsilon_2=\frac{1}{3}\sum_{i=1}^n\sum_{j\in \mathbb{A}_i}\sum_{k,l=1}^p\mathbb{E}\left\lbrace\vphantom{\left[\left(\sum_r\in A_j\right)^2\right]^{1/2}}\left[\vphantom{\left(\sum_r\in A_j\right)^2}\sum_{m=1}^p\left(\vphantom{\sum_r\in A_j}X_{i,k}\,\|J_{i,k}\|\,X_{j,l}\,\|J_{j,l}\|\,\mathbb{1}_{[1,\lambda_k]}(i)\mathbb{1}_{[1,\lambda_l]}(j)\right.\right.\right.\nonumber\\
&\left.\left.\left.\phantom{\frac{\|g\|_M}{3}\sum_{i=1}^n\sum_{j\in \mathbb{A}_i}\sum_{k,l=1}^p\mathbb{E}\lbrace}\cdot\sum_{r\in \mathbb{A}_{ij}\,\cap\, \mathbb{A}_i^c}X_{r,m}\|J_{r,m}\|\mathbb{1}_{[1,\lambda_m]}(r)\right)^2\right]^{1/2}\right\rbrace;\\
&\epsilon_3=\frac{1}{3}\sum_{i=1}^n\sum_{j\in\mathbb{A}_i}\sum_{k,l=1}^p\left\lbrace\vphantom{\left[\sqrt{\left(\sum_1^p\right)^2}\right]}\left|\mathbb{E}\left[X_{i,k}X_{j,l}\right]\right|\mathbb{1}_{[1,\lambda_k]}(i)\mathbb{1}_{[1,\lambda_l]}(j)\right.\nonumber\\
&\phantom{\frac{\|g\|_{M}}{3}\sum_{i=1}^n\sum_{j\in\mathbb{A}_i}\sum_{k,l=1}^p\lbrace}\cdot\left.\mathbb{E}\left[\|J_{i,k}\|\,\|J_{j,l}\|\sqrt{\sum_{m=1}^p\left(\sum_{r\in \mathbb{A}_{ij}}X_{r,m}\|J_{r,m}\|\mathbb{1}_{[1,\lambda_m]}(r)\right)^2}\right]\right\rbrace;\\
&\epsilon_4=\frac{1}{2}\sum_{k,l=1}^p\sum_{i=1}^{\lambda_k\wedge\lambda_l}\left|\frac{\Sigma_{k,l}}{\sqrt{\lambda_k\lambda_l}}-\mathbb{E}[X_{i,k}X_{i,l}]\right|;\\
&\epsilon_5=\frac{1}{2}\sum_{k,l=1}^p\sum_{i=1}^{\lambda_k}\sum_{j\in \mathbb{A} _i\setminus\lbrace i\rbrace}\left|\mathbb{E}[X_{i,k}X_{j,l}]\right|;\\
&\epsilon_6=\frac{6\sqrt{5}}{\sqrt{2\log 2}}\left(\sum_{i=1}^p \frac{\log\left(2\lambda_i\right)}{\lambda_i}\right)^{1/2}\left(\sum_{i=1}^p\Sigma_{i,i}\right)^{1/2};\\
&\epsilon_7=\sum_{k=1}^p\sum_{i=1}^{\lambda_k}\sqrt{\mathbb{E}\left[\left(X_{i,k}\right)^2\right]}\mathbb{E}\left\|J_{i,k}-\mathbb{1}_{[i/\lambda_k,1]}\right\|.
\end{align*}
\end{theorem}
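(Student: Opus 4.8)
The plan is to follow the standard Stein's-method-for-Brownian-approximation strategy pioneered in \cite{diffusion}, adapted to the multivariate and locally dependent setting. First I would identify the generator of the target process $\mathbf{Z}=\Sigma^{1/2}\mathbf{B}$: the relevant operator $\mathcal{A}$ acts on functionals $f\in M$ (or a suitable subclass) by $\mathcal{A}f(w)=\tfrac12\sum_{k,l=1}^p\Sigma_{k,l}\int_0^1 D^2f(w)[\mathbbm{1}_{[s,1]}e_k,\mathbbm{1}_{[s,1]}e_l]\,ds - Df(w)[w]$, the $p$-dimensional analogue of Barbour's operator, characterised by the fact that $\mathbbm{E}\,\mathcal{A}f(\mathbf{Z})=0$. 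One then solves the Stein equation $\mathcal{A}f = g - \mathbbm{E}g(\mathbf{Z})$ by writing $f$ as the integral of the semigroup applied to $g-\mathbbm{E}g(\mathbf{Z})$, and one records the bounds on $\|f\|_M$, $\|D^2f(w)\|$, and the Lipschitz constant of $D^2f$ in terms of $\|g\|_{M^1}$; these estimates are exactly the place where the subclass $M^1$ (rather than $M$ or $M^2$) is forced, because the semigroup regularises the test function and one needs control of $Df$ and $D^2f$ that does not degrade in $\|w\|$. I would assume these solution estimates as known (they are the $p$-dimensional versions of the corresponding lemmas in \cite{diffusion} and will be established in Section \ref{section5}).

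Next I would reduce the problem to bounding $|\mathbbm{E}\,\mathcal{A}f(\mathbf{Y}_n)|$ for the solution $f$ of the Stein equation. Here the idea is the usual "Barbour–Stein" trick: for each coordinate process $\mathbf{Y}_n^{(k)}(t)=\sum_{i=1}^{\lambda_k}X_{i,k}J_{i,k}(t)$, write it as a sum over $i$ of the pieces $X_{i,k}J_{i,k}$, and exploit the local dependence structure encoded by the sets $\mathbbm{A}_i$. Concretely, one introduces, for each index $i$, the "leave-out" process $\mathbf{Y}_n^{i}$ obtained by removing all summands with indices in $\mathbbm{A}_i$ (so that $\mathbf{Y}_n^{i}$ is independent of $X_i$), and performs a second-order Taylor expansion of $Df(\mathbf{Y}_n)[\cdot]$ around $\mathbf{Y}_n^{i}$. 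Matching the $Df(\mathbf{Y}_n)[\mathbf{Y}_n]$ term against the $D^2 f$ integral term in $\mathcal{A}f$ and using the independence of $X_i$ from $\mathbf{Y}_n^i$ (and conditioning on the $J$'s throughout, using their independence of the $X$'s), the first-order contributions cancel in expectation. What survives are: (i) Taylor remainders of third order — these, after bounding $\|D^2f(w+h)-D^2f(w)\|\le k_f\|h\|$ and applying Cauchy–Schwarz, produce the terms $\epsilon_1,\epsilon_2,\epsilon_3$, which are precisely the expectations of products of three "size factors" $|X\|J\||$ along the dependency neighbourhoods; (ii) a discrepancy between the empirical second-moment structure $\sum_i \mathbbm{E}[X_{i,k}X_{i,l}]\|J\cdots\|$ (including cross terms within dependency sets) and the target $\Sigma$-weighted integral against $\mathbbm{1}_{[s,1]}$ — this yields $\epsilon_4$ (diagonal-in-$i$ covariance mismatch, normalised by $\sqrt{\lambda_k\lambda_l}$) and $\epsilon_5$ (off-diagonal within-neighbourhood covariances); (iii) an error coming from replacing the actual jump functions $J_{i,k}$ by the idealised step functions $\mathbbm{1}_{[i/\lambda_k,1]}$, which is $\epsilon_7$. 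Finally $\epsilon_6$ arises from controlling $\mathbbm{E}\|\mathbf{Z}_m-\mathbf{Z}\|$ or, more precisely, a smoothing/interpolation error where a piecewise-constant version of $\mathbf{Z}$ (with $\lambda_k$ steps in coordinate $k$) is compared to $\mathbf{Z}$ itself; the $\sqrt{\log}$ factor is the classical modulus-of-continuity bound for Brownian motion, and the constant $6\sqrt5/\sqrt{2\log 2}$ comes from an explicit maximal inequality (a Lévy/Bernstein-type bound on $\sup|\mathbf{B}|$ over dyadic partitions), which is why it can be made fully explicit.

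The main obstacle will be step (ii) combined with the bookkeeping in (i): getting the cancellation of first-order terms to work cleanly requires carefully tracking which summands belong to $\mathbbm{A}_i$, to $\mathbbm{A}_i^c$, and to intersections like $\mathbbm{A}_j\cap\mathbbm{A}_i^c$ and unions $\mathbbm{A}_i\cup\mathbbm{A}_j$ — these are exactly the index sets appearing in $\epsilon_2$ and $\epsilon_3$, and they come from a two-step conditioning (first removing $\mathbbm{A}_i$, then within that expansion removing $\mathbbm{A}_j$ for the second-order term in $j$). Keeping the Taylor expansions at the right order (so that all non-vanishing lower-order terms are genuinely of the advertised form and the remainders are genuinely third-order in the "size factors") is the delicate part; the multivariate aspect multiplies the number of index tuples $(k,l,m)$ one must sum over but does not change the structure. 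Throughout, I would condition on the collection $\{J_{i,k}\}$ first (legitimate by assumption (c)), do all the Stein-method manipulations with the $X$'s, and only at the end take expectations over the $J$'s, invoking the uniform third-moment bound $\sup \mathbbm{E}[\|J_{i_1,k_1}\|\|J_{i_2,k_2}\|\|J_{i_3,k_3}\|]<\infty$ to ensure every term is finite.
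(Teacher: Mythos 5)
Your overall architecture differs from the paper's in one essential respect: you build the Stein operator for the \emph{limiting} process $\mathbf{Z}=\Sigma^{1/2}\mathbf{B}$ (Barbour's original operator, with the term $\tfrac12\sum_{k,l}\Sigma_{k,l}\int_0^1 D^2f(w)[e_k\mathbbm{1}_{[s,1]},e_l\mathbbm{1}_{[s,1]}]\,ds$), whereas the paper deliberately builds it for the \emph{pre-limiting} Gaussian process $\mathbf{D}_n=\big(\sum_i\tilde Z_{i,k}J_{i,k}\big)_k$ whose covariance exactly matches that of the $X_{i,k}$'s. With the paper's choice, the generator is $\mathcal{A}_nf(w)=-Df(w)[w]+\mathbbm{E}D^2f(w)[\mathbf{D}_n,\mathbf{D}_n]$, the covariance term in the Stein bound cancels identically, and Lemma \ref{lemma_aux} produces only $\epsilon_1,\epsilon_2,\epsilon_3$. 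The remaining terms then come from a chain of \emph{non-Stein} comparisons $\mathbf{D}_n\to\tilde{\mathbf{D}}_n\to\tilde{\mathbf{A}}_n\to\mathbf{Z}$: $\epsilon_7$ by the Mean Value Theorem ($J_{i,k}$ versus $\mathbbm{1}_{[i/\lambda_k,1]}$), $\epsilon_4+\epsilon_5$ from a finite-dimensional Gaussian--Gaussian comparison (\cite[Proposition 2.8]{reinert_roellin}), and $\epsilon_6$ from an explicit coupling of the piecewise-constant Gaussian with $\mathbf{Z}$ and the bound on $\mathbbm{E}\sup_t|\mathbf{B}(t)-\mathbf{B}(\lfloor\lambda t\rfloor/\lambda)|^2$. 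Incidentally, the restriction to $M^1$ is not forced by the Stein-equation estimates (Proposition \ref{prop12.17} holds for all $g\in M$); it is forced by these MVT steps, which require $\sup_w\|Dg(w)\|<\infty$ because only first moments of the coupling errors are controlled.

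The concrete gap in your route is the covariance-matching step. If you bound $|\mathbbm{E}\mathcal{A}f(\mathbf{Y}_n)|$ for the generator of $\mathbf{Z}$, then after the local-dependence Taylor expansion you must compare $\sum_{i,j}\mathbbm{E}[X_{i,k}X_{j,l}]\,\mathbbm{E}D^2f(\cdot)[e_kJ_{i,k},e_lJ_{j,l}]$ with $\Sigma_{k,l}\int_0^1\mathbbm{E}D^2f(\cdot)[e_k\mathbbm{1}_{[s,1]},e_l\mathbbm{1}_{[s,1]}]\,ds$ \emph{inside} the Stein bound. The only regularity you have recorded for $f$ is the Lipschitz property of $D^2f$ in the sup norm, and this is useless here: $\|\mathbbm{1}_{[s,1]}-\mathbbm{1}_{[s',1]}\|=1$ for all $s\neq s'$, so the Riemann-sum-versus-integral discrepancy cannot be made small this way. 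This is precisely where the $\sqrt{\log\lambda/\lambda}$ modulus-of-continuity rate has to enter, and it cannot be extracted from the Lipschitz constant of $D^2f$; it requires either much finer (and harder to establish) continuity properties of $s\mapsto D^2f(w)[e_k\mathbbm{1}_{[s,1]},\cdot]$ for the BM semigroup, as in Barbour's one-dimensional argument, or the paper's device of realising the piecewise-constant Gaussian and $\mathbf{Z}$ on one probability space and applying the MVT to $\mathbbm{E}\|\tilde{\mathbf{A}}_n-\mathbf{Z}\|$. Your sketch implicitly acknowledges this by invoking ``a piecewise-constant version of $\mathbf{Z}$'' for $\epsilon_6$, but that object has no place in a proof organised around $|\mathbbm{E}\mathcal{A}f(\mathbf{Y}_n)|$ for the limiting generator; once you introduce it you are, in effect, redoing the paper's decomposition, and the proposal does not explain how $\epsilon_4$ and $\epsilon_5$ (a Gaussian--Gaussian covariance comparison in the paper) and $\epsilon_6$ would emerge with their stated forms and explicit constants from the single Stein estimate you describe.
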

\begin{remark}[Relevance of terms in the bound]~
\begin{enumerate}[label={(\alph*)}]
\item Terms $\epsilon_1, \epsilon_2, \epsilon_3$ correspond to a Berry-Esseen-type bound involving third moments of the summands, and also account for local dependence between the summands;
\item Terms $\epsilon_4$ and $\epsilon_5$ involve a variance estimation with the latter corresponding to the off-diagonal terms of the covariance matrix of the summands, accounting for the dependence;
\item Term $\epsilon_6$ comes from estimates on the moments of the Brownian modulus of continuity and accounts for the transition from the Skorokhod space to the Wiener space of continuous functions;
\item Term $\epsilon_7$ describes the randomness of the functions $J_{i,k}$ and their distance from indicators $\mathbb{1}_{[i/\lambda_k,1]}$. 
\end{enumerate}
\end{remark}
\begin{remark}[Convergence of the bound and process weak convergence]\label{remark_weak}
By Proposition \ref{prop_m}, if, in Theorem \ref{local3}, $J_{i,k}=\mathbb{1}_{[i/\lambda_k,1]}$ for all $i=1,\dots,n$ and $k=1,\dots,p$ and the bound $\sum_{i=1}^7\epsilon_i$ converges to $0$ faster than $\frac{1}{\log^2(\max(\lambda_1,\dots,\lambda_p))}$, then $\mathbf{Y}_n$ converges to $\mathbf{Z}$ in distribution with respect to the uniform topology. We note that, in practice, one might expect that $\epsilon_4$ and $\epsilon_5$ will be the slowest vanishing terms. 
\end{remark}
\begin{remark}[Independent summands]\label{remark_ind}
If the summands are independent in Theorem \ref{local3}, i.e. $\mathbb{A}_i=\lbrace i\rbrace$ for all $i$, then $\epsilon_2$ and $\epsilon_5$ disappear from the bound and $\epsilon_1$ and $\epsilon_3$ become simpler.
The new bound takes the following form
$$\left|\mathbb{E}g(\mathbf{Y}_n)-\mathbb{E}g(\mathbf{Z})\right|\leq \|g\|_{M^1}\left(\epsilon_1+\epsilon_3+\epsilon_4+\epsilon_6+\epsilon_7\right),$$
where:
\begin{align*}
&\epsilon_1=\frac{1}{6}\sum_{i=1}^n\mathbb{E}\left\lbrace\left[\sum_{k,l,m=1}^p\left(X_{i,k}X_{i,l}X_{i,m}\|J_{i,k}\|\,\|J_{i,l}\|\,\|J_{i,m}\|\mathbb{1}_{[1,\lambda_k]\cap[1,\lambda_l]\cap[1,\lambda_m]}(i)\right)^2\right]^{1/2}\right\rbrace;\\
&\epsilon_3=\frac{1}{3}\sum_{k,l=1}^p\sum_{i=1}^{\min(\lambda_k,\lambda_l)}\left\lbrace\vphantom{\left[\sqrt{\left(\sum_1^p\right)^2}\right]}\left|\mathbb{E}\left[X_{i,k}X_{i,l}\right]\right|\mathbb{E}\left[\|J_{i,k}\|\,\|J_{i,l}\|\sqrt{\sum_{m=1}^p\left(X_{i,m}\|J_{i,m}\|\mathbb{1}_{[1,\lambda_m]}(i)\right)^2}\right]\right\rbrace;\\
&\epsilon_4=\frac{1}{2}\sum_{k,l=1}^p\sum_{i=1}^{\min(\lambda_k,\lambda_l)}\left|\frac{\Sigma_{k,l}}{\sqrt{\lambda_k\lambda_l}}-\mathbb{E}[X_{i,k}X_{i,l}]\right|;\\
&\epsilon_6=\frac{6\sqrt{5}}{\sqrt{2\log 2}}\left(\sum_{i=1}^p \frac{\log\left(2\lambda_i\right)}{\lambda_i}\right)^{1/2}\left(\sum_{i=1}^p\Sigma_{i,i}\right)^{1/2};\\
&\epsilon_7=\sum_{k=1}^p\sum_{i=1}^{\lambda_k}\sqrt{\mathbb{E}\left[\left(X_{i,k}\right)^2\right]}\mathbb{E}\left\|J_{i,k}-\mathbb{1}_{[i/\lambda_k,1]}\right\|.
\end{align*}
In this case, it is also possible to derive a bound for the larger class of test functions $M$ (see Section \ref{section 2}). A bound for such test functions, in the case of independent summands, is obtained in Proposition \ref{local1}.
\end{remark}

\subsection{Scaled sum of independent vectors with dependent components}
The next result treats quantitatively the case of independent $p$-dimensional terms with dependent components, whose scaled sum can be compared to a correlated $p$-dimensional Brownian motion:
\begin{proposition}[Independent summands with dependent components]\label{local1}
Suppose that $X_1,...,X_n$, where $X_i=\left(X_i^{(1)},\dots,X_i^{(p)}\right)$ for $i=1,\dots,n$, are i.i.d. random vectors in $\mathbb{R}^p$. Suppose that each has a positive definite symmetric covariance matrix $\Sigma\in \mathbb{R}^{p\times p}$ and mean zero. Let:
$$\mathbf{Y}_n(t)=n^{-1/2}\sum_{i=1}^{\lfloor nt\rfloor} X_i,\quad t\in[0,1]$$
and for $\mathbf{B}$, a standard $p$-dimensional Brownian motion, let $\mathbf{Z}=\Sigma^{1/2}\mathbf{B}$. Then, for any $g\in M$:
\begin{align*}
&|\mathbb{E}g(\mathbf{Y}_n)-\mathbb{E}g(\mathbf{Z})|\\
\leq& \|g\|_Mn^{-1/2}\left\lbrace\sqrt{\log 2n}\left[\frac{6\sqrt{5}}{\sqrt{\pi\log 2}}\left(\sum_{i=1}^p\Sigma_{i,i}\right)^{1/2}+\frac{93p^{1/2}}{\sqrt{2\log 2}}\sum_{i=1}^p|\Sigma_{i,i}|^{3/2}\right]\right.\\
& +\frac{1}{6}\left(p^{1/2}\sum_{m=1}^p\mathbb{E}\left|X_1^{(m)}\right|^3+2\sum_{k,l=1}^p\left|\Sigma_{k,l}\right|\left(\sum_{m=1}^p\mathbb{E}\left|X_1^{(m)}\right|^2\right)^{1/2}\right)\\
&+\left. n^{-1}(\log 2n)^{3/2}p^{1/2}\frac{2160}{\sqrt{\pi}(\log 2)^{3/2}}\sum_{i=1}^p\left|\Sigma_{i,i}\right|^{3/2}\right\rbrace.
\end{align*}
\end{proposition}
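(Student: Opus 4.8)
The plan is to run the generator-based Stein's-method scheme for Brownian approximation in the spirit of \cite{diffusion, functional_combinatorial, kasprzak1}, specialised to i.i.d.\ summands but carried through for the full class $M$. First I would set up the Stein equation $\mathcal{A}\phi=g-\mathbbm{E}g(\mathbf{Z})$ for the infinite-dimensional Ornstein--Uhlenbeck process on $D^p$ whose stationary law is that of $\mathbf{Z}=\Sigma^{1/2}\mathbf{B}$, namely $\mathcal{A}\phi(w)=\mathbbm{E}\big(D^2\phi(w)[\mathbf{Z}',\mathbf{Z}']\big)-D\phi(w)[w]$ with $\mathbf{Z}'$ an independent copy of $\mathbf{Z}$, and solve it by $\phi_g(w)=-\int_0^\infty\big(\mathbbm{E}g\big(e^{-u}w+\sqrt{1-e^{-2u}}\,\mathbf{Z}\big)-\mathbbm{E}g(\mathbf{Z})\big)\,du$. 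From the analysis of the Stein equation carried out earlier in Section~\ref{section5} I would import the explicit Fréchet-derivative bounds on $\phi_g$ in terms of $\|g\|_M$: for $j=0,1,2$, $\|D^j\phi_g(w)\|$ is at most an explicit multiple of $\|g\|_M(1+\|w\|^{3-j})$, together with the Lipschitz estimate $\|D^2\phi_g(w+h)-D^2\phi_g(w)\|\le c\,\|g\|_M\|h\|$.

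Then I would write $\mathbf{Y}_n=\sum_{i=1}^n\xi_i$ with $\xi_i=n^{-1/2}X_i\,\mathbbm{1}_{[i/n,1]}$, use $\mathbbm{E}g(\mathbf{Y}_n)-\mathbbm{E}g(\mathbf{Z})=\mathbbm{E}\mathcal{A}\phi_g(\mathbf{Y}_n)$, and expand. The term $D\phi_g(\mathbf{Y}_n)[\mathbf{Y}_n]=\sum_i D\phi_g(\mathbf{Y}_n)[\xi_i]$ is Taylor-expanded about the leave-one-out path $\mathbf{Y}_n^{(i)}=\mathbf{Y}_n-\xi_i$ to second order, the remainder controlled by the Lipschitz bound on $D^2\phi_g$. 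By independence and $\mathbbm{E}X_i=0$ the first-order terms $\mathbbm{E}\,D\phi_g(\mathbf{Y}_n^{(i)})[\xi_i]$ vanish; the second-order terms $\sum_i\mathbbm{E}\,D^2\phi_g(\mathbf{Y}_n^{(i)})[\xi_i,\xi_i]$ are matched against $\mathbbm{E}\,D^2\phi_g(\mathbf{Y}_n)[\mathbf{Z}',\mathbf{Z}']$ by (i) replacing $\mathbf{Y}_n^{(i)}$ with $\mathbf{Y}_n$ at the cost of the $D^2\phi_g$-Lipschitz bound times $\|\xi_i\|$, and (ii) comparing $\sum_i\mathbbm{E}[\xi_i\otimes\xi_i]$, whose kernel equals $\Sigma\,\lfloor n(s\wedge t)\rfloor/n$, with the covariance kernel $\Sigma(s\wedge t)$ of $\mathbf{Z}'$; the $\ell^1$ discrepancy, of size at most $n^{-1}\sum_{k,l}|\Sigma_{k,l}|$, produces the $\sum_{k,l}|\Sigma_{k,l}|\big(\sum_m\mathbbm{E}|X_1^{(m)}|^2\big)^{1/2}$ contribution after combining with the quadratic growth of $D\phi_g$. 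The Taylor remainders are bounded by $k_{\phi_g}\sum_i\mathbbm{E}\|\xi_i\|^3=k_{\phi_g}n^{-1/2}\mathbbm{E}|X_1|^3$, which after estimating $|X_1|$ coordinatewise yields the $p^{1/2}\sum_m\mathbbm{E}|X_1^{(m)}|^3$ term.

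The logarithmic and $\Sigma_{i,i}^{3/2}$ terms come from the passage between the piecewise-constant $\mathbf{Y}_n$ and the continuous limit. Writing $\widehat{\mathbf{Z}}_n$ for the discretisation of $\mathbf{Z}$ to the mesh $\{i/n\}$ --- equivalently $n^{-1/2}\sum_i W_i\mathbbm{1}_{[i/n,1]}$ with $W_i$ i.i.d.\ $N(0,\Sigma)$, for which the covariance matching above is exact and the third-moment remainder is governed by $\mathbbm{E}|W_1|^3$, i.e.\ by $\sum_i\Sigma_{i,i}^{3/2}$ --- one would split $|\mathbbm{E}g(\mathbf{Y}_n)-\mathbbm{E}g(\mathbf{Z})|\le|\mathbbm{E}g(\mathbf{Y}_n)-\mathbbm{E}g(\widehat{\mathbf{Z}}_n)|+|\mathbbm{E}g(\widehat{\mathbf{Z}}_n)-\mathbbm{E}g(\mathbf{Z})|$ and bound the second difference by $\|g\|_M\,\mathbbm{E}\big((1+\|\cdot\|^2)\,\|\widehat{\mathbf{Z}}_n-\mathbf{Z}\|\big)$, then estimate the moments of $\|\widehat{\mathbf{Z}}_n-\mathbf{Z}\|\le\omega_{\mathbf{Z}}(1/n)$ through the Brownian modulus of continuity --- of order $\sqrt{\log(2n)/n}$ and $(\log(2n)/n)^{3/2}$, carrying the $6\sqrt5/\sqrt{2\log2}$-type constants --- together with $\mathbbm{E}\|\mathbf{Z}\|^q$ bounded by explicit multiples of $\big(\sum_i\Sigma_{i,i}\big)^{q/2}$ (Doob/Burkholder--Davis--Gundy). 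This produces the $\sqrt{\log 2n}\,\sum_i|\Sigma_{i,i}|^{3/2}$ and $n^{-1}(\log 2n)^{3/2}\sum_i|\Sigma_{i,i}|^{3/2}$ terms. Collecting all contributions, pulling out $\|g\|_M$, and tracking every numerical constant gives the claimed bound.

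The main obstacle is the bookkeeping: one must carry the polynomial-growth factors of the $\phi_g$-derivatives (this is exactly what forces the weaker class $M$ here rather than $M^1$ as in Theorem~\ref{local3}) alongside the matching moment estimates for $\mathbf{Y}_n$, $\mathbf{Y}_n^{(i)}$, $\widehat{\mathbf{Z}}_n$ and $\mathbf{Z}$, while keeping all constants explicit and correctly handling the small-time behaviour of the Ornstein--Uhlenbeck flow (equivalently, the $n^{-1}$-scale comparison of càdlàg and continuous paths), which is where the $\log(2n)$ factors are generated; this is the technically heaviest and most error-prone part of the argument, whereas the rest is a routine, if lengthy, repetition of Taylor's theorem.
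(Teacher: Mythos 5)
Your proposal follows essentially the same route as the paper's proof: the generator/Ornstein--Uhlenbeck Stein equation with Fr\'echet-derivative bounds in terms of $\|g\|_M$, a leave-one-out Taylor expansion exploiting independence to produce the third-moment and $\sum_{k,l}|\Sigma_{k,l}|$ terms, and a separate coupling of the discretised Gaussian process $\widehat{\mathbf{Z}}_n$ (the paper's $\mathbf{D}_n$) with $\mathbf{Z}$, controlled by the Brownian modulus of continuity and the Mean Value Theorem with the quadratic growth of $Dg$ (which is exactly what forces the $\mathbbm{E}\|\mathbf{Z}\|^3$-type moments and hence the $\sum_i|\Sigma_{i,i}|^{3/2}$ factors for the class $M$). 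One small bookkeeping correction: in the paper the term $\sum_{k,l}|\Sigma_{k,l}|\left(\sum_m\mathbbm{E}|X_1^{(m)}|^2\right)^{1/2}$ comes entirely from your step (i) --- replacing $\mathbf{Y}_n^{(i)}$ by $\mathbf{Y}_n$ in the second derivative at the cost of the Lipschitz constant of $D^2\phi_g$ times $\mathbbm{E}\|\xi_i\|$ --- and not from the covariance-kernel discrepancy in your step (ii), which contributes nothing once the Stein target is taken to be the discretised process $\widehat{\mathbf{Z}}_n$, as you in fact do in your final splitting.
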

\begin{remark}
The bound in Proposition \ref{local1} is of order $\frac{\sqrt{\log n}}{\sqrt{n}}$. We are not aware of any reference providing a bound in a similar setup (i.e. in a multidimensional version of Donsker's theorem) but we note that our bound is of the same order as the bound derived in \cite{diffusion} for one-dimensional Donsker's theorem.
\end{remark}
\begin{remark}
If the components are uncorrelated and scaled in Proposition \ref{local1}, i.e. $\Sigma=I_{p\times p}$, then the bound simplifies in the following way:
\begin{align*}
&|\mathbb{E}g(\mathbf{Y}_n)-\mathbb{E}g(\mathbf{Z})|\\
\leq& \|g\|_Mn^{-1/2}\left\lbrace\sqrt{\log 2n}\left[\frac{6\sqrt{5}p^{1/2}}{\sqrt{2\log 2}}+\frac{93p^{3/2}}{\sqrt{\pi\log 2}}\right]\right.\\
&+\frac{1}{6}\left(p^{1/2}\sum_{m=1}^p\mathbb{E}\left|X_1^{(m)}\right|^3+2p^{3/2}\right)
+\left. n^{-1}(\log 2n)^{3/2}p^{3/2}\frac{2160}{\sqrt{\pi}(\log 2)^{3/2}}\right\rbrace.
\end{align*}
\end{remark}
\begin{remark}
For fixed $p$, by Proposition \ref{prop_m}, Theorem \ref{local1} implies that $\mathbf{Y}_n$ converges in distribution to $ \mathbf{Z}$ in the uniform topology as the bound is of order $\frac{\sqrt{\log n}}{\sqrt{n}}$. If one made $p$ depend on $n$ the bound would also converge to zero as $n\to\infty$ as long as $p=o\left(n^{1/5}\right)$.
\end{remark}

\subsection{Non-degenerate bivariate U-statistics}
The next result will be proved using ideas similar to those used to prove Theorem \ref{local3}.
It treats non-degenerate bivariate U-statistics. Those, as observed for instance in \cite[Corollary 1]{hall}, after proper rescaling, represent a process created out of globally dependent summands and converge to standard Brownian motion in distribution under certain conditions. We find a bound for the rate of this convergence.
 
We note that bivariate U-statistics are defined to be random variables of the form:
$$S_n^2(h)=\sum_{1\leq i_1<i_2\leq n} h(X_{i_1},X_{i_2}),\quad n\geq 1$$
for a symmetric real (or complex) function $h$ on $\mathcal{S}^2$ (where $\mathcal{S}$ is some measurable space) and a sequence of i.i.d. random variables $(X_i)_{i\geq 1}$ taking values in $\mathcal{S}$. Here, we only consider non-degenerate U-statistics, i.e. those with $0<\sigma_w^2=\text{Var}(w(X_1))<\infty$, where $w(x)=\mathbb{E}[h(X_1,x)]$. The reason is that in the case of degenerate ones (i.e. those satisfying $\text{Var}(w(X_1))=0$) the limit in the invariance principle is non-Gaussian (see \cite[Corollary 1]{hall}), which is beyond the scope of this paper.
\begin{theorem}[Non-degenerate bivariate U-statistics]\label{theorem_u_stats}
Let $X_1,X_2,...$ be i.i.d. random variables taking values in some measurable space $\mathcal{S}$ and let $h:\mathcal{S}^2\to\mathbb{R}$ be a symmetric function such that $\mathbb{E}\left[h(X_1,X_2)\right]=0$, $\mathbb{E}\left[h^2(X_1,X_2)\right]=\sigma_h^2<\infty$. Also, suppose that, for the function $w(x)=\mathbb{E}[h(X_1,x)]$, we have that: $0<\sigma_w^2=\text{Var}(w(X_1))$ and $\mathbb{E}|w(X_1)|^3<\infty$. Let:
$$\mathbf{Y}_n(t)=\frac{n^{-3/2}}{\sigma_wt}\sum_{1\leq i_1<i_2\leq \lfloor nt\rfloor}h(X_{i_1},X_{i_2}),\quad t\in[0,1]$$
and let $\mathbf{Z}$ be a standard Brownian motion. Then, for any $g\in M^2$, as defined by (\ref{m_2}):
\begin{align*}
|\mathbb{E}g(\mathbf{Y}_n)-\mathbb{E}g(\mathbf{Z})|
\leq& \|g\|_{M^2}n^{-1/2}\left[ \left(141+16\frac{\sigma_h^2}{\sigma_w^2}+12\left(\frac{\sigma_h^2}{\sigma_w^2}-2\right)^{1/2}\right)\sqrt{\log 3n}
\right.\\
&\left.+43+\frac{\mathbb{E}|w(X_1)|^3+2\sigma_w^2\mathbb{E}|w(X_1)|}{6\sigma_w^3}\vphantom{\left(\frac{\sigma_h^2}{\sigma_w^2}-2\right)^{1/2}}\right].
\end{align*}
\end{theorem}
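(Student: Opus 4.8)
The plan is to apply the Hoeffding decomposition to reduce the U-statistic process to a rescaled i.i.d.\ random walk---to which Theorem \ref{local3} applies directly---plus two explicit remainder processes controlled by martingale maximal inequalities. Write $h(x,y)=w(x)+w(y)+\psi(x,y)$ with $\psi(x,y):=h(x,y)-w(x)-w(y)$. Symmetry of $h$ and $w(x)=\mathbbm{E}[h(X_1,x)]$ give $\mathbbm{E}[w(X_1)]=0$, $\mathbbm{E}[\psi(X_1,x)]=0$ for all $x$, and $\mathbbm{E}[\psi(X_1,X_2)^2]=\sigma_h^2-2\sigma_w^2$ (so $\sigma_h^2\ge2\sigma_w^2$). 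For $m=\lfloor nt\rfloor$ one has $\sum_{1\le i<j\le m}h(X_i,X_j)=(m-1)L_m+D_m$ with $L_m=\sum_{i=1}^m w(X_i)$ and $D_m=\sum_{1\le i<j\le m}\psi(X_i,X_j)$, hence $\mathbf{Y}_n=\tilde{\mathbf{Y}}_n+\mathbf{R}_n^{(1)}+\mathbf{R}_n^{(2)}$, where
\[
\tilde{\mathbf{Y}}_n(t)=\frac{L_{\lfloor nt\rfloor}}{\sigma_w\sqrt n},\quad\mathbf{R}_n^{(1)}(t)=\Bigl(\frac{\lfloor nt\rfloor-1}{\sigma_w n^{3/2}t}-\frac1{\sigma_w\sqrt n}\Bigr)L_{\lfloor nt\rfloor},\quad\mathbf{R}_n^{(2)}(t)=\frac{D_{\lfloor nt\rfloor}}{\sigma_w n^{3/2}t}.
\]

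For the main term, note that $\tilde{\mathbf{Y}}_n(t)=\sum_{i=1}^n(\sigma_w\sqrt n)^{-1}w(X_i)\,\mathbbm{1}_{[i/n,1]}(t)$ is precisely a rescaled random walk of the i.i.d.\ mean-zero variables $\sigma_w^{-1}w(X_i)$, whose scalar variance already matches the limit, $\mathbbm{E}[((\sigma_w\sqrt n)^{-1}w(X_1))^2]=1/n$. Applying the $p=1$ case of Theorem \ref{local3} with $\lambda_1=n$, $\Sigma=1$, $\mathbbm{A}_i=\{i\}$, $J_{i,1}=\mathbbm{1}_{[i/n,1]}$, so that by Remark \ref{remark_ind} $\epsilon_2=\epsilon_4=\epsilon_5=\epsilon_7=0$, $\epsilon_1=\mathbbm{E}|w(X_1)|^3/(6\sigma_w^3\sqrt n)$, $\epsilon_3=\mathbbm{E}|w(X_1)|/(3\sigma_w\sqrt n)$ and $\epsilon_6=\frac{6\sqrt5}{\sqrt{2\log2}}(\log(2n)/n)^{1/2}$, bounds $|\mathbbm{E}g(\tilde{\mathbf{Y}}_n)-\mathbbm{E}g(\mathbf{Z})|$ by $\|g\|_{M^2}$ times
\[
\frac{\mathbbm{E}|w(X_1)|^3+2\sigma_w^2\mathbbm{E}|w(X_1)|}{6\sigma_w^3\sqrt n}+\frac{6\sqrt5}{\sqrt{2\log2}}\sqrt{\frac{\log 2n}{n}},
\]
which are exactly the $\sigma_h$-free terms of the asserted bound. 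Here one must check that the estimates in the proof of Theorem \ref{local3} (equivalently Proposition \ref{local1}) persist for the wider class $M^2$ in this independent-summand, deterministic-$J$ special case; this is true because the first-order Stein term then vanishes by independence and only the second-derivative norms, which $M^1$, $M^2$ and $M$ share, are used afterwards.

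To pass from $\tilde{\mathbf{Y}}_n$ to $\mathbf{Y}_n$, put $\mathbf{R}_n:=\mathbf{R}_n^{(1)}+\mathbf{R}_n^{(2)}$ and use $g(v+\mathbf{R}_n)-g(v)=\int_0^1 Dg(v+s\mathbf{R}_n)[\mathbf{R}_n]\,ds$ with $\|Dg(u)\|\le\|g\|_{M^2}(1+\|u\|)$ and Cauchy--Schwarz, giving $|\mathbbm{E}g(\mathbf{Y}_n)-\mathbbm{E}g(\tilde{\mathbf{Y}}_n)|\le\|g\|_{M^2}\bigl(\mathbbm{E}\|\mathbf{R}_n\|+(\mathbbm{E}\|\tilde{\mathbf{Y}}_n\|^2)^{1/2}(\mathbbm{E}\|\mathbf{R}_n\|^2)^{1/2}+\mathbbm{E}\|\mathbf{R}_n\|^2\bigr)$, so the task reduces to moment estimates. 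Since $t\in[k/n,(k+1)/n)$ forces $nt\ge k$, one has $\|\mathbf{R}_n^{(1)}\|\le2(\sigma_w\sqrt n)^{-1}\max_{1\le k\le n}k^{-1}|L_k|$ and $\|\mathbf{R}_n^{(2)}\|\le(\sigma_w\sqrt n)^{-1}\max_{1\le k\le n}k^{-1}|D_k|$. Both $(L_k)$ and $(D_k)$ are martingales for $\mathcal{F}_k=\sigma(X_1,\dots,X_k)$---for $D_k$ because $\mathbbm{E}[D_{k+1}-D_k\mid\mathcal{F}_k]=\sum_{i\le k}\mathbbm{E}[\psi(X_i,X_{k+1})\mid\mathcal{F}_k]=0$ by degeneracy of $\psi$---so, splitting $\{1,\dots,n\}$ into dyadic blocks, applying Doob's $L^2$ inequality on each, and using $\mathbbm{E}[L_m^2]=m\sigma_w^2$ and $\mathbbm{E}[D_m^2]=\binom m2(\sigma_h^2-2\sigma_w^2)$, one gets $\mathbbm{E}\|\tilde{\mathbf{Y}}_n\|^2=O(1)$, $\mathbbm{E}\|\mathbf{R}_n^{(1)}\|^2=O(1/n)$ (the dyadic sum is geometric), and $\mathbbm{E}\|\mathbf{R}_n^{(2)}\|^2=O\bigl((\sigma_h^2/\sigma_w^2-2)\log n/n\bigr)$ (now $O(\log n)$ blocks, each $O(\sigma_h^2-2\sigma_w^2)$). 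Feeding these in, bounding $\log n/n\le\sqrt{\log(3n)/n}$, and collecting constants produces the remaining terms: $16(\sigma_h^2/\sigma_w^2)n^{-1/2}\sqrt{\log 3n}$ from $\mathbbm{E}\|\mathbf{R}_n^{(2)}\|^2$, $12(\sigma_h^2/\sigma_w^2-2)^{1/2}n^{-1/2}\sqrt{\log 3n}$ from $(\mathbbm{E}\|\tilde{\mathbf{Y}}_n\|^2)^{1/2}(\mathbbm{E}\|\mathbf{R}_n^{(2)}\|^2)^{1/2}$ and $\mathbbm{E}\|\mathbf{R}_n^{(2)}\|$, plus a purely $O(n^{-1/2})$ contribution from $\mathbf{R}_n^{(1)}$; adding this to the previous paragraph and consolidating absolute constants gives the stated bound.

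I expect the delicate point---and the reason for the first-order (Lipschitz) expansion of $g$ rather than a second-order Taylor expansion around $\tilde{\mathbf{Y}}_n$, and for stating the theorem on $M^2$ rather than the larger $M$---to be the weak moment hypothesis: only $\mathbbm{E}h^2<\infty$ is assumed, so no moment of $\psi$, hence of $\mathbf{R}_n^{(2)}$, beyond the second is available. The whole remainder estimate must therefore stay at the $L^2$ level, where the martingale maximal inequality and $\mathbbm{E}[\psi(X_1,X_2)^2]=\sigma_h^2-2\sigma_w^2$ suffice, and this is exactly what forces the $1+\|u\|$ (rather than $1+\|u\|^2$) growth of $Dg$, i.e.\ the class $M^2$. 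A secondary nuisance is the $1/t$ singularity of $\mathbf{Y}_n$ at $t=0$, handled above by passing to the maximal functions $\max_k k^{-1}|L_k|$ and $\max_k k^{-1}|D_k|$; and one should confirm the $M^1\to M^2$ extension of the one-dimensional Donsker estimate invoked in the second paragraph.
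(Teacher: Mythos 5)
Your proposal follows essentially the same route as the paper: the Hoeffding decomposition $h(x,y)=w(x)+w(y)+\psi(x,y)$, reduction of the linear part to a rescaled i.i.d.\ walk compared to Brownian motion via the Stein estimate of Lemma \ref{lemma_aux} together with the Brownian modulus-of-continuity bound, and control of the degenerate remainder through its martingale structure, the identity $\mathbbm{E}[\psi(X_1,X_2)^2]=\sigma_h^2-2\sigma_w^2$, and an $L^2$ maximal inequality for $\max_k k^{-1}|D_k|$ (the paper cites the Fazekas--Klesov inequality where you use dyadic blocking, and it shifts the $(\lfloor nt\rfloor-1)/(nt)$-versus-$1$ discrepancy onto the Gaussian side as $\mathbf{A}_n$ versus $\hat{\mathbf{A}}_n$ rather than your $\mathbf{R}_n^{(1)}$, but these are the same estimates). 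The one inaccuracy is your justification of the $M^1\to M^2$ extension: it is not true that only second-derivative norms are used after the vanishing first-order Stein term, since Step 3 of the proof of Theorem \ref{local3} bounds the modulus-of-continuity contribution by $\sup_w\|Dg(w)\|\,\mathbbm{E}\|\tilde{\mathbf{A}}_n-\mathbf{Z}\|$, and $\sup_w\|Dg(w)\|$ is infinite for general $g\in M^2$; the correct fix, which the paper implements and which your own remainder argument already contains, is the first-plus-second-moment Cauchy--Schwarz device $\mathbbm{E}[(1+\|\cdot\|)\|\tilde{\mathbf{A}}_n-\mathbf{Z}\|]\leq\mathbbm{E}\|\tilde{\mathbf{A}}_n-\mathbf{Z}\|+\mathbbm{E}\|\tilde{\mathbf{A}}_n-\mathbf{Z}\|^2+(\mathbbm{E}\|\mathbf{Z}\|^2)^{1/2}(\mathbbm{E}\|\tilde{\mathbf{A}}_n-\mathbf{Z}\|^2)^{1/2}$, which contributes additional terms of the same order $\sqrt{\log n/n}$ that are absorbed into the stated constants.
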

\begin{remark}[Discussion of the bound]
The term $\frac{\mathbb{E}|w(X_1)|^3+2\sigma_w^2\mathbb{E}|w(X_1)|}{6\sigma_w^3}$ appearing in the bound comes from the comparison of the process given by
$$\tilde{\mathbf{Y}}_n(t)=\frac{n^{-3/2}}{\sigma_wt}\sum_{1\leq i_1<i_2\leq \lfloor nt\rfloor}\left(w(X_{i_1})+w(X_{i_2})\right),\quad t\in[0,1]$$ and a piecewise constant Gaussian process. It involves a Berry-Esseen-type third absolute moment component. The remaining terms come from the comparison of $\mathbf{Y}_n$ and $\tilde{\mathbf{Y}}_n$ and from the comparison of the piecewise constant Gaussian process and Brownian motion, for
 which the Brownian modulus of continuity is used.
 
 The bound is of order $\frac{\sqrt{\log n}}{\sqrt{n}}$. We are not aware of any reference providing a bound on the rate of \textit{functional} convergence of non-degenerate U-statistics but we note that our bound is of the same order as the bound obtained in \cite{diffusion} for the rate of convergence in the classical Donsker's theorem.
\end{remark}
\begin{remark}
By Proposition \ref{prop_m}, Theorem \ref{theorem_u_stats} implies that $\mathbf{Y}_n$ converges in distribution to  $\mathbf{Z}$ in the uniform (and Skorokhod) topology.
\end{remark}

\begin{remark}
The constants in Theorems \ref{local3}, \ref{theorem_u_stats} and Proposition \ref{local1} are not optimal ones as they are often estimated in a crude manner in the proofs presented in the section below. The constants are, however, expressed explicitly, which is often not the case in related pieces of literature. We also have no information about the optimality of the orders of the obtained bounds.  
\end{remark}
\section{Example: Exceedances of the m-scans process}\label{ex_exceedances}
Consider an extension of the one-dimensional results presented in \cite[Example 9.2, p. 254]{normal_approx} to the multidimensional and functional setting. For $j=1,2,\dots,$ let $V_j=\left(V_{j,1},\dots,V_{j,p}\right)$  be i.i.d. random vectors in $\mathbb{R}^p$. For $k=1,\dots, p$ and $i=1,2,\dots$ let $R_{i,k}=\sum_{l=0}^{m-1}V_{i+l,k}$ be an $m$-scans process. Let $a=(a_1,\dots,a_p)\in\mathbb{R}^p$ and suppose that $n>m$.

For $k=1,\dots,p$, let $\pi_k=\mathbb{P}(R_{1,k}\leq a_k)$ and for $i=1,\dots,n$ and $k=1,\dots,p$, let
$$X_{i,k}=\frac{1}{n}\left(\sum_{j=1}^n\mathbb{1}[R_{n(i-1)+j,k}\leq a_k]\right)-\pi_k.$$ 

Extending \cite[(4.1)]{dembo_rinott}, we have that, for $k,l=1,\dots,p$ and for $\psi_{k,l}(d)=\mathbb{P}\left[R_{d+1,k}\leq a_k,R_{1,l}\leq a_l\right]-\pi_k\pi_l$,
\begin{equation}\label{same_vector}
\mathbb{E}\left[X_{i,k}X_{i,l}\right]=\frac{1}{n}\left(\psi_{k,l}(0)+\sum_{d=1}^{m-1}\left(1-\frac{d}{n}\right)\left(\psi_{l,k}(d)+\psi_{k,l}(d)\right)\right).
\end{equation}

Let $X_i=(X_{i,1},\dots,X_{i,p})$ for $i=1,\dots,n$. Note that  $\mathbb{A}_{i}=\lbrace i-1,i,i+1\rbrace$ satisfies the requirement that $X_i$ is independent of $\lbrace X_j:j\in\mathbb{A}_i^c\rbrace$ and that we can take $\mathbb{A}_{ij}=\mathbb{A}_i\cup\mathbb{A}_j$. Furthermore,  for all $k,l\in\lbrace 1,\dots, p\rbrace$,
\begin{equation}\label{diff_vector}
\mathbb{E}\left[X_{i,k}X_{i+1,l}\right]=\frac{1}{n^2}\sum_{d=1}^{m-1}d\psi_{k,l}(d).
\end{equation}
Consider
$$\mathbf{Y}_n(t)=\sum_{i=1}^{\lfloor nt\rfloor}\left(X_{i,1},\dots,X_{i,p}\right)\quad t\in[0,1].$$
Let $\Sigma\in\mathbb{R}^{p\times p}$ be given by
\begin{equation}\label{cov_matrix}
\Sigma_{k,l}=\psi_{k,l}(0)+\sum_{d=1}^{m-1}\left(\psi_{l,k}(d)+\psi_{k,l}(d)\right).
\end{equation}
We will bound the distance between $\mathbf{Y}_n$ and $\mathbf{Z}=\Sigma^{1/2}\mathbf{B}$, where $\mathbf{B}$ is a standard $p$-dimensional Brownian motion. Using the notation of Theorem \ref{local3}, note that for all $k\in\lbrace 1,\dots,p\rbrace$, $\lambda_k=n$, for all $i\in\lbrace 1,\dots, n\rbrace$, $J_{i,k}=\mathbb{1}_{[i/n,1]}$ and
\begin{enumerate}[label=(\arabic*)]
\item By Cauchy-Schwarz and Jensen inequalities and (\ref{same_vector}),
\begin{flalign*}
\epsilon_1\leq&\frac{3}{2n^{1/2}}\sum_{k,l,r=1}^p\left\lbrace \left(\psi_{k,k}(0)+2\sum_{d=1}^{m-1}\left(1-\frac{d}{n}\right)\left(\psi_{k,k}(d)\right)\right)^{1/2}\right.&\\
&\cdot\left(\psi_{l,l}(0)+2\sum_{d=1}^{m-1}\left(1-\frac{d}{n}\right)\left(\psi_{l,l}(d)\right)\right)^{1/2}&\\
&\cdot\left.\left(\psi_{r,r}(0)+2\sum_{d=1}^{m-1}\left(1-\frac{d}{n}\right)\left(\psi_{r,r}(d)\right)\right)^{1/2}\right\rbrace;&
\end{flalign*}
\item By Cauchy-Schwarz and Jensen inequalities and (\ref{same_vector}),
\begin{flalign*}
\epsilon_2\leq &\frac{2}{3n^{1/2}}\sum_{k,l,r=1}^p\left\lbrace \left(\psi_{k,k}(0)+2\sum_{d=1}^{m-1}\left(1-\frac{d}{n}\right)\left(\psi_{k,k}(d)\right)\right)^{1/2}\right.&\\
&\cdot\left(\psi_{l,l}(0)+2\sum_{d=1}^{m-1}\left(1-\frac{d}{n}\right)\left(\psi_{l,l}(d)\right)\right)^{1/2}&\\
&\cdot\left.\left(\psi_{r,r}(0)+2\sum_{d=1}^{m-1}\left(1-\frac{d}{n}\right)\left(\psi_{r,r}(d)\right)\right)^{1/2}\right\rbrace;&
\end{flalign*}
\item By Cauchy-Schwarz and Jensen inequalities and (\ref{same_vector}),
\begin{flalign*}
\epsilon_3\leq&\frac{2}{n^{1/2}}\sum_{k,l,r=1}^p\left\lbrace \left(\psi_{k,k}(0)+2\sum_{d=1}^{m-1}\left(1-\frac{d}{n}\right)\left(\psi_{k,k}(d)\right)\right)^{1/2}\right.&\\
&\cdot\left(\psi_{l,l}(0)+2\sum_{d=1}^{m-1}\left(1-\frac{d}{n}\right)\left(\psi_{l,l}(d)\right)\right)^{1/2}&\\
&\cdot\left.\left(\psi_{r,r}(0)+2\sum_{d=1}^{m-1}\left(1-\frac{d}{n}\right)\left(\psi_{r,r}(d)\right)\right)^{1/2}\right\rbrace;&
\end{flalign*}
\item By (\ref{same_vector}) and (\ref{cov_matrix}),
\begin{flalign*}
&\epsilon_4=\frac{1}{2n}\sum_{k,l=1}^p\left|\sum_{d=1}^{m-1}d(\psi_{l,k}(d)+\psi_{k,l}(d))\right|;&
\end{flalign*}
\item By (\ref{diff_vector}),
\begin{flalign*}
&\epsilon_5\leq\frac{1}{n}\sum_{l,k=1}^p\sum_{d=1}^{m-1}d\psi_{k,l}(d);&
\end{flalign*}
\item By (\ref{cov_matrix}),
\begin{flalign*}
&\epsilon_6=\frac{6\sqrt{5}p^{1/2}}{\sqrt{2\log 2}}\frac{\sqrt{\log(2n)}}{\sqrt{n}}\left[\sum_{k=1}^p\left(\psi_{k,k}(0)+2\sum_{d=1}^{m-1}\psi_{k,k}(d)\right)\right]^{1/2};&
\end{flalign*}
\item Since for all $k\in\lbrace 1,\dots,p\rbrace$ and $i\in\lbrace 1,\dots, n\rbrace$, $J_{i,k}=\mathbb{1}_{[i/n,1]}$,
\begin{flalign*}
&\epsilon_7=0.&
\end{flalign*}
\end{enumerate}
By Theorem \ref{local3}, for any $g\in M^1$, as defined in (\ref{m_1}),
$$|\mathbb{E}g(\mathbf{Y}_n)-\mathbb{E}g(\mathbf{Z})|\leq\|g\|_{M^1}\sum_{i=1}^7\epsilon_i,$$
which gives the desired bound. The bound clearly approaches zero faster than $\log^{-2}(n)$, as $n\to\infty$. Indeed, terms $\epsilon_1,\epsilon_2,\epsilon_3$ converge to zero at rate $n^{-1/2}$, $\epsilon_4$ and $\epsilon_5$ do so at rate $n^{-1}$, $\epsilon_6$ at rate $\frac{\sqrt{\log n}}{\sqrt{n}}$ and $\epsilon_7=0$. This, by Proposition \ref{prop_m}, implies that $\mathbf{Y}_n$ converges in distribution to $\mathbf{Z}$ with respect to the uniform topology.

\section{Proofs of the main results}\label{section5}
The main tool used in the proofs of Theorems \ref{local3}, \ref{theorem_u_stats} and Proposition \ref{local1} is Stein's method. It can be used in a surprisingly easy way to find a distance of the processes of interest from certain scaled sums of Gaussian random variables, which approximate the limiting continuous Gaussian process. 

First, we set up Stein's method for distributions of certain $D^p$-valued random objects expressed as scaled sums of Gaussian random variables. Using a collection of Ornstein-Uhlenbeck processes with a Gaussian stationary law, we will construct a process whose stationary law is that of our target distribution. Then, we will find the infinitesimal generator $\mathcal{A}$ of that process and deduce that $\mathcal{A}g=g-\mathbb{E}_\mu g$ can be used as our Stein equation, where $\mu$ is the target law. This follows from the fact that $\mathbb{E}_\mu\mathcal{A}g=0$ for all $g$ in the domain of $\mathcal{A}$. We will then solve the Stein equation for all $g\in M$, using the analysis of \cite{kasprzak}, and use some appealing properties of the Ornstein-Uhlenbeck semigroup to prove bounds on the derivatives of the solution.
\subsection{Setting up Stein's method}\label{section_stein}
Let $n,p\in\mathbb{N}_+$ and let $\tilde{Z}_{i,k}$'s be centred Gaussian random variables for $i=1,\dots,n$, $k=1,\dots,p$. Suppose that
\begin{enumerate}[label=\alph*)]
\item the covariance matrix of $\left(\tilde{Z}_{1,1},\dots, \tilde{Z}_{1,p},\tilde{Z}_{2,1},\dots,\tilde{Z}_{2,p},\dots, \tilde{Z}_{n,1},\dots, \tilde{Z}_{n,p}\right)$ is given by $\Sigma_n\in\mathbb{R}^{(np)\times(np)}$;
\item $\lbrace J_{i,k}\in D\left([0,1],\mathbb{R}\right):\,i=1,\dots, n,\,k=1,\dots, p\rbrace$ is a collection of functions independent of $\lbrace \tilde{Z}_{i,k}:\,i=1,\dots, n,\,k=1,\dots, p\rbrace$;
\item $\lambda_k\leq n$, for all $k=1,\dots,p$.
\end{enumerate}
Let
\begin{equation}\label{d_n}
\mathbf{D}_n(t)=\left(\sum_{i=1}^{ \lambda_1}\tilde{Z}_{i,1}J_{i,1}(t),\dots,\sum_{i=1}^{\lambda_p}\tilde{Z}_{i,p}J_{i,p}(t)\right),\quad t\in[0,1],
\end{equation}

Now let $\lbrace (\mathscr{X}_{i,j}(u),u\geq 0):i=1,\dots,n,j=1,\dots,p\rbrace$ be an array of i.i.d. Ornstein-Uhlenbeck processes with stationary law $\mathcal{N}(0,1)$, i.e. independent processes such that each weakly solves the following stochastic differential equation
$$dx_t=-x_t\,dt+\sqrt{2}\,dB(t),\quad x_0\sim\mathcal{N}(0,1),$$
for $B(t),t\geq 0$ denoting the standard Wiener process.
Suppose that the collection $\lbrace (\mathscr{X}_{i,j}(u),u\geq 0):i=1,\dots,n,j=1,\dots,p\rbrace$ is independent of the collection $\lbrace J_{i,k}: i=1,\dots,n,k=1,\dots,p\rbrace$. Consider:
$$\tilde{\mathscr{U}}(u)=\left(\Sigma_n\right)^{1/2}\left(\mathscr{X}_{1,1}(u),\dots,\mathscr{X}_{1,p}(u),\mathscr{X}_{2,1}(u),\dots,\mathscr{X}_{2,p}(u),\dots,\mathscr{X}_{n,1}(u),\dots \mathscr{X}_{n,p}(u)\right)^T$$ for $u\geq 0$ and write $\mathscr{U}_{i,k}(u)=\left(\tilde{\mathscr{U}}(u)\right)_{p(i-1)+k}$ for $i=1,\dots,n$ and $k=1,\dots,p$. This notation is introduced for convenience, in order to define the following process:
$$\mathbf{W}_n(t,u)=\left(\sum_{i=1}^{\lambda_1}\mathscr{U}_{i,1}(u)J_{i,1}(t),\dots,\sum_{i=1}^{ \lambda_p}\mathscr{U}_{i,p}(u)J_{i,p}
(t)\right),\qquad t\in[0,1],\quad u\geq 0.$$
The stationary law of the process $\left(\mathbf{W}_n(\cdot,u)\right)_{u\geq 0}$ is exactly the law of $\mathbf{D}_n$. We claim that:
\begin{proposition}\label{prop12.7}
The infinitesimal generator $\mathcal{A}_n$ of the process $\left(\mathbf{W}_n(\cdot,u)\right)_{u\geq 0}$ acts on any $f\in M$ in the following way:
\begin{align*}
&\mathcal{A}_nf(w)=-Df(w)[w]+\mathbb{E}D^2f(w)\left[\mathbf{D}_n,\mathbf{D}_n\right].
\end{align*}
\end{proposition}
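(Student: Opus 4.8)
The plan is to recognize $\bigl(\mathbf{W}_n(\cdot,u)\bigr)_{u\geq 0}$ as the image of a well-understood finite-dimensional Ornstein--Uhlenbeck process under a fixed (conditionally on the $J_{i,k}$) linear map, and then push the generator through that map. Concretely, write $\mathscr{Y}(u)=\bigl(\mathscr{X}_{1,1}(u),\dots,\mathscr{X}_{n,p}(u)\bigr)^T\in\mathbbm{R}^{np}$, so that $\tilde{\mathscr{U}}(u)=(\Sigma_n)^{1/2}\mathscr{Y}(u)$ and, for each fixed realization of the functions $\{J_{i,k}\}$, the map $\Phi\colon\mathbbm{R}^{np}\to D^p$ sending a vector $z=(z_{i,k})$ to $\Phi(z)=\bigl(\sum_{i=1}^{\lambda_1}z_{i,1}J_{i,1},\dots,\sum_{i=1}^{\lambda_p}z_{i,p}J_{i,p}\bigr)$ is linear and bounded. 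Then $\mathbf{W}_n(\cdot,u)=\Phi\bigl((\Sigma_n)^{1/2}\mathscr{Y}(u)\bigr)$, i.e. $\mathbf{W}_n$ is $\Psi(\mathscr{Y}(u))$ for the fixed linear map $\Psi=\Phi\circ(\Sigma_n)^{1/2}$.

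First I would recall the generator of the standard $np$-dimensional OU process $\mathscr{Y}$ with stationary law $\mathcal{N}(0,I_{np})$: for smooth $\varphi\colon\mathbbm{R}^{np}\to\mathbbm{R}$,
\[
\mathcal{L}\varphi(z)=-\langle z,\nabla\varphi(z)\rangle+\Delta\varphi(z)=-\langle z,\nabla\varphi(z)\rangle+\sum_{m}\partial_m^2\varphi(z).
\]
Next, for $f\in M$ I would apply this with $\varphi=f\circ\Psi$. Since $\Psi$ is linear, the chain rule gives $\nabla\varphi(z)=\Psi^*Df(\Psi z)$ and $\partial_m^2\varphi(z)=D^2f(\Psi z)[\Psi e_m,\Psi e_m]$, where $e_m$ ranges over the standard basis of $\mathbbm{R}^{np}$ and $\Psi e_m\in D^p$; the Fréchet differentiability of $f$ and boundedness of $\Psi$ justify this (one must check $f\circ\Psi$ lies in the domain of $\mathcal{L}$, which follows from the polynomial growth and derivative bounds packaged in $\|f\|_M<\infty$ from Lemma~\ref{first_der}). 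Evaluating at $z=\mathscr{Y}(u)$ and using $\Psi\mathscr{Y}(u)=\mathbf{W}_n(\cdot,u)$, whose stationary value is distributed as $\mathbf{D}_n$, the drift term becomes $-Df(w)[w]$ at $w=\mathbf{W}_n$. For the second-order term, $\sum_m D^2f(w)[\Psi e_m,\Psi e_m]$ must be identified with $\mathbbm{E}D^2f(w)[\mathbf{D}_n,\mathbf{D}_n]$: writing $\mathbf{D}_n=\Phi\bigl((\Sigma_n)^{1/2}G\bigr)$ for $G\sim\mathcal{N}(0,I_{np})$ independent of the $J_{i,k}$, bilinearity of $D^2f(w)$ gives
\[
\mathbbm{E}\,D^2f(w)[\mathbf{D}_n,\mathbf{D}_n]
=\sum_{m,m'}\mathbbm{E}[G_mG_{m'}]\,D^2f(w)[\Psi e_m,\Psi e_{m'}]
=\sum_{m}D^2f(w)[\Psi e_m,\Psi e_m],
\]
since $\mathbbm{E}[G_mG_{m'}]=\delta_{m,m'}$ (and the expectation over the $J$'s matches because $\Psi$ carries that dependence). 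Conditioning on the $J_{i,k}$ throughout and then taking expectations handles the randomness of the functions.

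The main obstacle I expect is the analytic bookkeeping around differentiating the composition $f\circ\Psi$ on the infinite-dimensional space $D^p$ and verifying that $f\circ\Psi$ genuinely sits in the domain of the OU generator $\mathcal{L}$ — i.e. justifying the interchange of the generator limit $u^{-1}(\mathbbm{E}[f(\mathbf{W}_n(\cdot,u))\mid\mathbf{W}_n(\cdot,0)=w]-f(w))\to\mathcal{A}_nf(w)$ with the finite sum over basis vectors. This is routine given the growth controls in $M$ and the fact that $\Psi$ has finite rank (at most $np$), so the "infinite-dimensional" aspect is cosmetic: everything reduces to the smooth finite-dimensional OU generator composed with a linear map, and the $M$-norm bounds supply the dominating functions needed for the limit. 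Once that is in place, the stated formula follows by matching terms as above.
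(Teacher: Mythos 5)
Your argument is correct, but it takes a genuinely different route from the paper. The paper first proves Lemma \ref{lemma_1.2} (via the representation $\mathscr{X}_{i,j}(u)=e^{-u}\mathscr{B}_{i,j}(e^{2u})$) to obtain the explicit transition formula $(T_{n,u}f)(w)=\mathbbm{E}\left[f\left(we^{-u}+\sigma(u)\mathbf{D}_n\right)\right]$, and then identifies the generator by a direct second-order Taylor expansion of $f$ in $D^p$, using the $\|\cdot\|_M$ bounds to show the remainder is $O(u^{3/2})(1+\|w\|^3)$ and that $\mathbbm{E}Df(w)[\mathbf{D}_n]=0$ kills the first-order Gaussian term. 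You instead pull everything back to the standard $np$-dimensional OU process and push its classical generator through the finite-rank linear map $\Psi$; your chain-rule computation and the identification $\sum_m D^2f(w)[\Psi e_m,\Psi e_m]=\mathbbm{E}\,D^2f(w)[\mathbf{D}_n,\mathbf{D}_n]$ (conditionally on the $J_{i,k}$'s) are exactly right, including the absence of a factor $1/2$. Two points deserve to be made explicit in your route. First, $\Psi$ is not injective in general, so "the generator of the image process" is only meaningful once you check that $\mathbbm{E}[f(\Psi\mathscr{Y}(u))\mid \mathscr{Y}(0)=z]=\mathbbm{E}[f(e^{-u}\Psi z+\sigma(u)\Psi G)]$ depends on $z$ only through $w=\Psi z$ — a one-line verification, but it is precisely the content of the paper's Lemma \ref{lemma_1.2}, so you have not really avoided that step. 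Second, membership of $f\circ\Psi$ in the domain of the finite-dimensional OU generator for a $C^2$ function with cubic growth and Lipschitz second derivative is itself proved by the same Taylor-expansion-with-remainder estimate the paper performs, so the analytic work is relocated rather than removed. What the paper's approach additionally buys is the explicit semigroup representation (\ref{semigroup}), which is indispensable later (Proposition \ref{prop12.17} differentiates under the integral $\phi_n(g)=-\int_0^\infty T_{n,u}g\,du$ to bound the derivatives of the Stein solution); your version of the proof would still need to derive that formula separately.
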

\begin{remark}
By definition, the first Fr\'echet derivative of a function, at a certain point, is a linear map, while the second Fr\'echet derivative of a function, at a certain point, is a bilinear map. In Proposition \ref{prop12.7} above, and throughout this paper, $Df(w)[w]$ denotes the first Fr\'echet derivative of $f$, at $w$, applied to $w$ and $D^2f(w)\left[\mathbf{D}_n,\mathbf{D}_n\right]$ is the second Fr\'echet derivative of $f$, at $w$, applied to $\mathbf{D}_n$ and $\mathbf{D}_n$.
\end{remark}
\begin{remark}
The generator in Proposition \ref{prop12.7} can also be written in the following way:
\begin{align*}
\mathcal{A}_nf(w)=-Df(w)[w]+\sum_{k,l=1}^p\sum_{i=1}^{\lambda_k}\sum_{j=1}^{\lambda_l}\left(\Sigma_n\right)_{p(i-1)+k,p(j-1)+l}\mathbb{E}D^2f(w)\left[e_k J_{i,k},e_lJ_{j,l}\right].
\end{align*}
\end{remark}
Let us prove a lemma that will be used in the proof of Proposition \ref{prop12.7}.
\begin{lemma}\label{lemma_1.2}
We have, for $u\geq 0,v\geq 0$:
$$\mathbf{W}_n(\cdot,u+v)-e^{-v}\mathbf{W}_n(\cdot,u)\stackrel{\mathcal{D}}=\sigma(v)\mathbf{D}_n(\cdot)$$
for $\sigma^2(v)=1-e^{-2v}$.
\end{lemma}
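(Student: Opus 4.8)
The plan is to reduce the claim to the one–dimensional Markov/Mehler property of a stationary Ornstein--Uhlenbeck process and then propagate it through the two linear operations (premultiplication by $(\Sigma_n)^{1/2}$, and integration against the $J_{i,k}$'s) that define $\mathbf{W}_n$ out of the $\mathscr{X}_{i,j}$'s.

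First I would recall that if $(X(u))_{u\ge 0}$ is a stationary Ornstein--Uhlenbeck process with invariant law $\mathcal N(0,1)$, solving $dX=-X\,du+\sqrt2\,dB$ for a standard Brownian motion $B$, then $X(u+v)=e^{-v}X(u)+\sqrt2\int_u^{u+v}e^{-(u+v-s)}dB_s$, so that for every $u,v\ge 0$ the random variable $X(u+v)-e^{-v}X(u)$ is centred Gaussian with variance $1-e^{-2v}=\sigma^2(v)$ and is independent of $(X(s))_{s\le u}$. Applying this coordinatewise to the i.i.d. array $\lbrace\mathscr X_{i,j}\rbrace$, the $np$-vector
\[
G(u,v):=\bigl(\mathscr X_{1,1}(u+v)-e^{-v}\mathscr X_{1,1}(u),\ \dots,\ \mathscr X_{n,p}(u+v)-e^{-v}\mathscr X_{n,p}(u)\bigr)^T
\]
is $\mathcal N\bigl(0,\sigma^2(v)I_{np}\bigr)$ and is independent both of $\tilde{\mathscr U}(u)$ and of $\lbrace J_{i,k}\rbrace$ (the latter because the $\mathscr X_{i,j}$'s are independent of the $J_{i,k}$'s by construction).

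Next I would push this through the linear maps. By linearity, $\tilde{\mathscr U}(u+v)-e^{-v}\tilde{\mathscr U}(u)=(\Sigma_n)^{1/2}G(u,v)$, which is therefore centred Gaussian with covariance $\sigma^2(v)(\Sigma_n)^{1/2}(\Sigma_n)^{1/2}=\sigma^2(v)\Sigma_n$; reading off the coordinates $p(i-1)+j$, the array $\lbrace\mathscr U_{i,j}(u+v)-e^{-v}\mathscr U_{i,j}(u)\rbrace_{i,j}$ thus has exactly the joint law of $\lbrace\sigma(v)\tilde Z_{i,j}\rbrace_{i,j}$, and it is independent of $\lbrace J_{i,k}\rbrace$. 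Since $\lbrace\tilde Z_{i,j}\rbrace$ is likewise independent of $\lbrace J_{i,k}\rbrace$, the pair (array of Gaussian coefficients, array of functions) has the same law on both sides. Because $\mathbf W_n(\cdot,u+v)-e^{-v}\mathbf W_n(\cdot,u)$ is the image of that pair under the fixed measurable map $\bigl(\lbrace z_{i,j}\rbrace,\lbrace J_{i,k}\rbrace\bigr)\mapsto\bigl(t\mapsto(\sum_{i\le\lambda_1}z_{i,1}J_{i,1}(t),\dots,\sum_{i\le\lambda_p}z_{i,p}J_{i,p}(t))\bigr)$ — the same map sending $\bigl(\lbrace\tilde Z_{i,j}\rbrace,\lbrace J_{i,k}\rbrace\bigr)$ to $\mathbf D_n$ — equality in law of the inputs gives equality in law of the outputs as $D^p$-valued random elements, i.e. $\mathbf W_n(\cdot,u+v)-e^{-v}\mathbf W_n(\cdot,u)\stackrel{\mathcal D}{=}\sigma(v)\mathbf D_n(\cdot)$.

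\textbf{Main difficulty.} There is no genuine obstacle here; the only care needed is bookkeeping — keeping the $np$ indices and the reindexing $(i,j)\mapsto p(i-1)+j$ straight, and invoking the independence of the OU increments from the $J_{i,k}$'s explicitly, so that the distributional identity transfers from the coefficient arrays to the whole path in $t$ rather than merely for a fixed $t$. It may be cleanest to condition on $\lbrace J_{i,k}\rbrace$ throughout and observe that, conditionally, both sides are Gaussian processes in $t$ with mean zero and the same covariance kernel.
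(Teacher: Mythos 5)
Your proof is correct and follows essentially the same route as the paper: both reduce the claim to the fact that the one-dimensional stationary OU increment $\mathscr X(u+v)-e^{-v}\mathscr X(u)$ is $\mathcal N(0,1-e^{-2v})$ and then push this through the linear maps $(\Sigma_n)^{1/2}$ and the summation against the $J_{i,k}$'s. The only (immaterial) difference is that the paper obtains the OU increment law via the time-change representation $\mathscr X_{i,j}(u)=e^{-u}\mathscr B_{i,j}(e^{2u})$ rather than your stochastic-integral solution formula, and your explicit handling of the joint law with the $J_{i,k}$'s is if anything slightly more careful than the paper's.
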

\begin{proof}
We can construct i.i.d. standard Brownian motions $\mathscr{B}_{i,j}$ such that $\left(\mathscr{X}_{i,j}(u),u\geq 0\right)=\left(e^{-u}\mathscr{B}_{i,j}(e^{2u}),u\geq 0\right)$ (see, for instance \cite[Subsection 4.4.3]{pitman_yor}). Then, writing $\mathbf{W}_n=\left(\mathbf{W}_n^{(1)},\dots,\mathbf{W}_n^{(p)}\right)$ and $\mathbf{D}_n=\left(\mathbf{D}_n^{(1)},\dots,\mathbf{D}_n^{(k)}\right)$ we obtain for all $k=1,\dots,p$:
\begin{align*}
&\mathbf{W}_n^{(k)}(\cdot,u+v)-e^{-v}\mathbf{W}_n^{(k)}(\cdot,u)\\
=&\sum_{i=1}^{\lambda_k}\left[\mathscr{U}_{i,k}(u+v)-e^{-v}\mathscr{U}_{i,k}(u)\right]J_{i,k}(\cdot)\\
=&\sum_{i=1}^{\lambda_k}\left[\left(\tilde{\mathscr{U}}(u+v)\right)_{p(i-1)+k}-e^{-v}\left(\tilde{\mathscr{U}}(u)\right)_{p(i-1)+k}\right]J_{i,k}(\cdot)\\
\stackrel{(\ast)}=&\sum_{j=1}^n\sum_{l=1}^p\sum_{i=1}^{\lambda_k}\left(\Sigma_n^{1/2}\right)_{p(i-1)+k,p(j-1)+l}\left[\mathscr{X}_{j,l}(u+v)-e^{-v}\mathscr{X}_{j,l}(u)\right]J_{i,k}(\cdot)\\
\stackrel{\mathcal{D}}=&e^{-(u+v)}\sum_{j=1}^n\sum_{l=1}^p\sum_{i=1}^{\lambda_k}\left(\Sigma_n^{1/2}\right)_{p(i-1)+k,p(j-1)+l}\left[\mathscr{B}_{j,l}\left(e^{2(u+v)}\right)-\mathscr{B}_{j,l}\left(e^{2u}\right)\right]J_{i,k}(\cdot)\\
\stackrel{\mathcal{D}}=&\sigma(v)\mathbf{D}^{(k)}_n(\cdot),
\end{align*}
as $\mathscr{B}_{j,l}\left(e^{2(u+v)}\right)-\mathscr{B}_{j,l}\left(e^{2u}\right)\sim\mathcal{N}\left(0,e^{2(u+v)}-e^{2u}\right)$. In the above formula, the equality $(\ast)$ represents the matrix multiplication formula.
\end{proof}
\begin{proof}[Proof of Proposition \ref{prop12.7}]
Note that the semigroup of $\left(\mathbf{W}_n(\cdot,u)\right)_{u\geq 0}$, acting on $L$ of Section \ref{section 2} is defined by:
\begin{equation}\label{semigroup}
(T_{n,u}f)(w):=\mathbb{E}\left[\left.f\left(\mathbf{W}_n(\cdot,u)\right)\right|\mathbf{W}_n(\cdot,0)=w\right]=\mathbb{E}\left[f\left(we^{-u}+\sigma(u)\mathbf{D}_n(\cdot)\right)\right],
\end{equation}
where the last equality follows from Lemma \ref{lemma_1.2}.
By (\ref{semigroup}) and Lemma \ref{first_der} we have that, for every $f\in M$:
\begin{align*}
&\left|\vphantom{\frac{1}{2}}(T_{n,u}f)(w)-f(w)-\mathbb{E}Df(w)[\sigma(u)\mathbf{D}_n-w(1-e^{-u})]\right.\\
&\left.-\frac{1}{2}\mathbb{E}D^2f(w)\left[\sigma(u)\mathbf{D}_n-w(1-e^{-u}),\sigma(u)\mathbf{D}_n-w(1-e^{-u})\right]\right|\\
\leq&\|f\|_M\mathbb{E}\|\sigma(u)\mathbf{D}_n-w(1-e^{-u})\|^3\\
\leq &K_1(1+\|w\|^3)u^{3/2}
\end{align*}
for a constant $K_1$ depending only on $f$, where the last inequality follows from the fact that for $u\geq 0$, $\sigma^3(u)\leq 3u^{3/2}$ and $(1-e^{-u})^3\leq u^{3/2}$.
So:
\begin{align}
&\left|(T_{n,u}f-f)(w)+uDf(w)[w]-u\mathbb{E}D^2f(w)[\mathbf{D}_n,\mathbf{D}_n]\right| \nonumber\\
\leq&\left|\vphantom{\frac{1}{2}}(T_{n,u}f)(w)-f(w)-\mathbb{E}Df(w)[\sigma(u)\mathbf{D}_n-w(1-e^{-u})]\right.\nonumber\\
&-\left.\frac{1}{2}\mathbb{E}D^2f(w)[\sigma(u)\mathbf{D}_n-w(1-e^{-u}),\sigma(u)\mathbf{D}_n-w(1-e^{-u})]\right|+\left|\sigma(u)\mathbb{E}Df(w)[\mathbf{D}_n]\right|\nonumber\\
&+\left|(u-1+e^{-u})Df(w)[w]\right|+\left|\left(\frac{\sigma^2(u)}{2}-u\right)\mathbb{E}D^2f(w)[\mathbf{D}_n,\mathbf{D}_n]\right|\nonumber\\
&+\left|\frac{(1-e^{-u})^2}{2}D^2f(w)[w,w]\right|+\left|\sigma(u)(1-e^{-u})\mathbb{E}D^2f(w)[\mathbf{D}_n,w]\right|\nonumber\\
\leq &K_2u^{3/2}\left[\vphantom{\sum}(1+\|w\|^3)+(1+\|w\|^2)\|w\|+(1+\|w\|)\mathbb{E}\|\mathbf{D}_n\|^2\right.\nonumber\\
&\left.+(1+\|w\|)\|w\|^2+(1+\|w\|)\|w\|\mathbb{E}\|\mathbf{D}_n\|\vphantom{\sum}\right]+\left|\sigma(u)\mathbb{E}Df(w)[\mathbf{D}_n]\right|\nonumber\\
\leq& K_3(1+\|w\|^3)u^{3/2}\text{,}\label{trala}
\end{align}
for some constants $K_2$ and $K_3$ depending only on $f$. The last inequality follows from the fact that:
$$\mathbb{E}Df(w)[\mathbf{D}_n]=\sum_{k=1}^p\sum_{i=1}^{\lambda_k}\mathbb{E}Df(w)\left[J_{i,k}e_k\right]\mathbb{E}[\tilde{Z}_{i,k}]=0\text{.}$$
Therefore, by (\ref{trala}), we obtain that:
$$\mathcal{A}_nf(w):=\lim_{u\searrow 0}\frac{T_{n,u}f(w)-f(w)}{u}=-Df(w)[w]+\mathbb{E}D^2f(w)\left[\mathbf{D}_n,\mathbf{D}_n\right]\text{,}$$
as required.
\end{proof}
Now we prove the following:
\begin{proposition}\label{prop12.17}
For any $g\in M$ such that $\mathbb{E}g(\mathbf{D}_n)=0$, the Stein equation $\mathcal{A}_nf_n=g$ is solved by:
\begin{equation}\label{phi}
f_n=\phi_n(g)=-\int_0^{\infty}T_{n,u}gdu,
\end{equation}
where $(T_{n,u}f)(w)=\mathbb{E}\left[f\left(we^{-u}+\sigma(u)\mathbf{D}_n\right)\right]$
for $\sigma^2(v)=1-e^{-2v}$. Furthermore:
\begin{align}
\text{A)} \quad &\|D\phi_n(g)(w)\|\leq \|g\|_{ M}\left(1+\frac{2}{3}\|w\|^2+\frac{4}{3}\mathbb{E}\|\mathbf{D}_n\|^2\right)\text{,}\nonumber\\
\text{B)} \quad &\|D^2\phi_n(g)(w)\|\leq \|g\|_{ M}\left(\frac{1}{2}+\frac{\|w\|}{3}+\frac{\mathbb{E}\|\mathbf{D}_n\|}{3}\right)\text{,}\nonumber\\
\text{C)} \quad &\frac{\left\|D^2\phi_n(g)(w+h)-D^2\phi_n(g)(w)\right\|}{\|h\|}\nonumber\\
\leq&\sup_{w,h\in D^p}\frac{\|D^2(g+c)(w+h)-D^2(g+c)(w)\|}{3\|h\|}.\label{norm_bound}
\end{align}
for any constant function $c:D^p\to\mathbb{R}$ and for all $w,h\in D^p$.
\end{proposition}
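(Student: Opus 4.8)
The plan is to follow the standard generator approach to Stein's method, in three stages: (i) show that the integral in (\ref{phi}) converges and that $\phi_n(g)$ lies in the space $L$ of Section \ref{section 2}, so that the semigroup $T_{n,u}$ may legitimately be applied to it; (ii) use the semigroup property of $(T_{n,u})_{u\geq 0}$ together with the continuity of $u\mapsto T_{n,u}g(w)$ at $u=0$ to conclude $\mathcal{A}_n\phi_n(g)=g$; and (iii) obtain the bounds A)--C) by differentiating $\phi_n(g)$ under the integral and expectation signs and invoking the polynomial growth estimates of Lemma \ref{first_der}. Most of the analytic groundwork for (i) and (ii) is the one carried out in \cite{kasprzak}, which I would cite, reproducing here only what is needed to pin down the explicit constants in A)--C).

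For (i), starting from (\ref{semigroup}) one has $T_{n,u}g(w)=\mathbbm{E}[g(we^{-u}+\sigma(u)\mathbf{D}_n)]$; since $\mathbbm{E}g(\mathbf{D}_n)=0$, writing $T_{n,u}g(w)=\mathbbm{E}[g(we^{-u}+\sigma(u)\mathbf{D}_n)-g(\sigma(u)\mathbf{D}_n)]+\mathbbm{E}[g(\sigma(u)\mathbf{D}_n)-g(\mathbf{D}_n)]$ and applying Taylor's theorem to $g\in M$ together with the bound $\|Dg(v)\|\leq\|g\|_M(1+\|v\|^2)$ from Lemma \ref{first_der} (and $e^{-u}\to 0$, $\sigma(u)\to 1$ as $u\to\infty$) gives $|T_{n,u}g(w)|$ bounded by a constant times $e^{-u}$ for large $u$ and by a polynomial in $\|w\|$ for all $u$; hence the integral in (\ref{phi}) converges and $\phi_n(g)\in L$. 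For (ii), the increment identity of Lemma \ref{lemma_1.2} together with the Markov property of the Ornstein--Uhlenbeck processes gives the semigroup property $T_{n,u}T_{n,s}=T_{n,u+s}$, whence $T_{n,u}\phi_n(g)(w)-\phi_n(g)(w)=-\int_u^{\infty}T_{n,r}g(w)\,dr+\int_0^{\infty}T_{n,r}g(w)\,dr=\int_0^uT_{n,r}g(w)\,dr$; dividing by $u$ and letting $u\searrow 0$, continuity of $r\mapsto T_{n,r}g(w)$ at $0$ (valid since $g$ is continuous and $we^{-r}+\sigma(r)\mathbf{D}_n\to w$ in $D^p$) yields $\mathcal{A}_n\phi_n(g)(w)=T_{n,0}g(w)=g(w)$.

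For (iii), differentiating $\phi_n(g)(w)=-\int_0^{\infty}\mathbbm{E}[g(we^{-u}+\sigma(u)\mathbf{D}_n)]\,du$ under the integral and expectation --- justified by dominated convergence using the polynomial bounds on $Dg$ and $D^2g$ and the finiteness of the relevant moments of $\|\mathbf{D}_n\|$ --- gives $D\phi_n(g)(w)[h]=-\int_0^{\infty}e^{-u}\,\mathbbm{E}\bigl(Dg(we^{-u}+\sigma(u)\mathbf{D}_n)[h]\bigr)\,du$ and $D^2\phi_n(g)(w)[h,h]=-\int_0^{\infty}e^{-2u}\,\mathbbm{E}\bigl(D^2g(we^{-u}+\sigma(u)\mathbf{D}_n)[h,h]\bigr)\,du$. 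For A) I would use $\|Dg(v)\|\leq\|g\|_M(1+\|v\|^2)$, the inequality $\|we^{-u}+\sigma(u)\mathbf{D}_n\|^2\leq 2e^{-2u}\|w\|^2+2\sigma^2(u)\|\mathbf{D}_n\|^2$, and the elementary integrals $\int_0^{\infty}e^{-u}\,du=1$, $\int_0^{\infty}e^{-3u}\,du=1/3$, $\int_0^{\infty}e^{-u}\sigma^2(u)\,du=2/3$. For B) I would use $\|D^2g(v)\|\leq\|g\|_M(1+\|v\|)$, the triangle inequality $\|we^{-u}+\sigma(u)\mathbf{D}_n\|\leq e^{-u}\|w\|+\sigma(u)\|\mathbf{D}_n\|$, and $\int_0^{\infty}e^{-2u}\,du=1/2$, $\int_0^{\infty}e^{-3u}\,du=1/3$, $\int_0^{\infty}e^{-2u}\sigma(u)\,du=1/3$. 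For C), the arguments of $D^2g$ evaluated at $(w+h)e^{-u}+\sigma(u)\mathbf{D}_n$ and at $we^{-u}+\sigma(u)\mathbf{D}_n$ differ by $he^{-u}$, so the integrand is bounded in operator norm by $e^{-2u}\cdot e^{-u}\|h\|$ times the seminorm $\sup_{v,h'}\|D^2g(v+h')-D^2g(v)\|/\|h'\|$, which is unchanged when $g$ is replaced by $g+c$ for a constant $c$ since $D^2(g+c)=D^2g$; integrating $\int_0^{\infty}e^{-3u}\,du=1/3$ then gives the stated bound.

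The main obstacle is not the algebra but the analytic bookkeeping concentrated in stages (i) and (iii): checking that $\phi_n(g)$ is well defined and lies in $L$ (so that $T_{n,u}\phi_n(g)$ makes sense), that the interchange of differentiation with the $u$-integral and with the expectation over $\mathbf{D}_n$ is legitimate, and that the moments $\mathbbm{E}\|\mathbf{D}_n\|$ and $\mathbbm{E}\|\mathbf{D}_n\|^2$ appearing in A) and B) are finite. These are precisely the points treated in \cite{kasprzak}, so I would invoke that reference for the details and present in full only the parts that differ in the present $p$-dimensional setting and that yield the explicit constants.
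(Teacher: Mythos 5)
Your proposal is correct and follows essentially the same route as the paper: the paper likewise invokes \cite[Proposition 1]{kasprzak} for the fact that $\phi_n(g)$ solves the Stein equation, obtains $D^k\phi_n(g)(w)=-\mathbbm{E}\int_0^{\infty}e^{-ku}D^kg(we^{-u}+\sigma(u)\mathbf{D}_n)\,du$ by dominated convergence, and derives A)--C) from the growth bounds of Lemma \ref{first_der} and the same elementary integrals ($\int_0^\infty e^{-u}\sigma^2(u)\,du=2/3$, $\int_0^\infty e^{-2u}\sigma(u)\,du=1/3$, $\int_0^\infty e^{-3u}\,du=1/3$). The only cosmetic difference is that you sketch the semigroup/generator verification of stage (ii) explicitly, whereas the paper delegates it entirely to the cited reference.
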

\begin{remark}
It is worth noting that obtaining a bound for $\mathbb{E}\|\mathbf{D}_n\|$ or $\mathbb{E}\|\mathbf{D}_n\|^2$ that does not blow up with $n\to\infty$ is not easy, unless $\mathbf{D}_n$ is a martingale and Doob's $L^2$ inequality can be used to show that $\mathbb{E}\|\mathbf{D}_n\|^2\leq\mathbb{E}|\mathbf{D}_n(1)|=\mathbb{E}\sqrt{\sum_{i=1}^p \mathbf{D}_n^{(i)}(1)}$. This is, for instance, the case, if $\tilde{Z}_i=\left(\tilde{Z}_{i,1},\dots,\tilde{Z}_{i,p}\right)$'s are independent and $J_{i,k}$'s are independent.
\end{remark}
\begin{proof}
The first part of the proposition follows by the argument used to prove \cite[Proposition 4.4]{kasprzak} upon noting that we can readily substitute $\mathbf{D}_n$ in the place of $Z$ therein due to $\mathbb{E}\|\mathbf{D}_n\|^3$ being finite. What follows is a sketch summary of this argument. Using dominated convergence theorem, we note that, for any $f\in M$ and $w\in D([0,1],\mathbb{R})$,
\begin{align*}
\left(\frac{d}{ds}\right)^+T_{n,s}f(w)&=\lim_{h\searrow 0}T_{n,s}\left[\frac{T_{n,h}-I}{h}f(w)\right]=\lim_{h\searrow 0}\mathbb{E}\left[\frac{T_{n,h}-I}{h}f(we^{-s}+\sigma(s)\mathbf{D}_n)\right]\\
&=\mathbb{E}\left[\lim_{h\searrow 0}\frac{T_{n,h}-I}{h}f(we^{-s}+\sigma(s)\mathbf{D}_n)\right]=T_{n,s}\mathcal{A}_nf(w).
\end{align*}
Similarly, for $s>0$, $\left(\frac{d}{ds}\right)^-T_{n,s}f=T_{n,s}\mathcal{A}_nf$ because:
\begin{align*}
&\lim_{h\searrow0}\frac{1}{-h}\left[T_{n,s-h}f-T_{n,s}f\right](w)-T_{n,s}\mathcal{A}_nf(w)\\
=&\lim_{h\searrow0}T_{n,s-h}\left[\left(\frac{T_{n,h}-I}{h}-\mathcal{A}_n\right)f\right](w)+\lim_{h\searrow0}\left(T_{n,s-h}-T_{n,s}\right)\mathcal{A}_nf(w)\\
=&\lim_{h\searrow 0}\mathbb{E}\left[\left(\frac{T_{n,h}-I}{h}-\mathcal{A}_n\right)f(we^{-s+h}+\sigma(s-h)\mathbf{D}_n)\right]\\
&+\lim_{h\searrow 0}\mathbb{E}\left[\mathcal{A}_n f(we^{-s+h}+\sigma(s-h)\mathbf{D}_n)-\mathcal{A}_nf(we^{-s}+\sigma(s)\mathbf{D}_n)\right]\\
=&0
\end{align*}
again, by dominated convergence and an argument similar to (\ref{trala}). Thus, for all $f\in M$ and $s>0$, we have
$$\frac{d}{ds}T_{n,s}f=T_{n,s}\mathcal{A}_nf$$
and so, by the fundamental theorem of calculus, for any $r>0$,
$$T_{n,r}f-f=\int_0^rT_{n,s}\mathcal{A}_nf\,ds.$$
Applying this to $f=\int_0^t T_{n,u}g\, du$ (which belongs to $M$, for instance by \cite[(2.23), (2.24)]{diffusion}), for some $t>0$, we obtain for any $r>0$ and any $w\in D([0,1],\mathbb{R})$,
\begin{align}\label{add_proof2}
T_{n,r}\int_0^t T_{n,u}g(w)du-\int_0^t T_{n,u}g(w)du=\int_0^r T_{n,s}\mathcal{A}_n\left(\int_0^tT_{n,u}g(w)du\right)ds.
\end{align}

On the other hand, for all $w\in D[0,1]$ and $h>0$:
\begin{align}
&\frac{1}{h} [T_{n,h}-I]\int_0^tT_{n,u}g(w)du=\frac{1}{h}\int_0^t[T_{n,u+h}g(w)-T_{n,u}g(w)]du\nonumber\\
=&\frac{1}{h}\int_t^{t+h}T_{n,u}g(w)du-\frac{1}{h}\int_0^hT_{n,u}g(w)du\nonumber\\
\stackrel{(\ref{semigroup})}=&\frac{1}{h}\int_t^{t+h}\mathbb{E}[g(w e^{-u}+\sigma(u)\mathbf{D}_n)]du-\frac{1}{h}\int_0^{h}\mathbb{E}[g(w e^{-u}+\sigma(u)\mathbf{D}_n)]du.
&\label{add_proof1}
\end{align}
Taking $h\to 0$ in (\ref{add_proof1}) and noting that
$$\lim_{h\searrow 0}\left[\frac{1}{h}\int_0^h\mathbb{E}g\left(we^{-s}+\sigma(s)\mathbf{D}_n\right)ds\right]=g(w),$$
as proved in \cite[(4.6)]{kasprzak}, yields
\begin{align}\label{add_proof3}
\mathcal{A}_n\left(\int_0^tT_{n,u}gdu\right)=T_{n,t}g-g.
\end{align}

Now, taking $t\to\infty$ in (\ref{add_proof2}) and applying dominated convergence, we obtain
\begin{align}
T_{n,r}\int_0^{\infty}T_{n,u}g(w)-\int_0^{\infty} T_{n,u}g(w)du=&\int_0^r T_{n,s}\lim_{t\to\infty}\mathcal{A}_n\left(\int_0^t T_{n,u}g(w)du\right)ds\nonumber\\
\stackrel{(\ref{add_proof3})}=&-\int_0^rT_{n,s}g(w)ds.\label{add_proof}
\end{align}

Furthermore, by \cite[Lemma 4.1]{kasprzak}, $\int_0^{\infty}T_{n,u}gdu$ is in the domain of $\mathcal{A}_n$. Therefore,  dividing both sides of (\ref{add_proof}) by $r$ and taking $r\searrow 0$ gives
\begin{align*}
\mathcal{A}_n\left(\int_0^{\infty}T_{n,u}g(w)du\right)\stackrel{(\ref{add_proof})}=&-\lim_{r\searrow 0}\frac{1}{r}\int_0^r T_{n,s}g(w)ds\\
=&-\lim_{r\searrow 0}\left[\frac{1}{r}\int_0^r\mathbb{E}g\left(we^{-s}+\sigma(s)\mathbf{D}_n\right)ds\right]\\
&=-g(w),
\end{align*}
where the last equality follows from \cite[(4.6)]{kasprzak}. This lets us conclude that the Stein equation $\mathcal{A}_nf_n=g$ is indeed solved by:
\begin{equation*}
f_n=\phi_n(g)=-\int_0^{\infty}T_{n,u}gdu.
\end{equation*}
Now, note that for $\phi_n$ defined in (\ref{phi}) we get:
\begin{align*}
&\phi_n(g)(w+h)-\phi_n(g)(w)\\
\stackrel{(\ref{semigroup})}=&-\mathbb{E}\int_0^\infty\left[g\left((w+h)e^{-u}+\sigma(u)\mathbf{D}_n\right)-g\left(we^{-u}+\sigma(u)\mathbf{D}_n\right)\right]du
\end{align*}
and so dominated convergence (which can be applied because of \cite[(4.2)]{kasprzak}) gives:
\begin{equation}\label{4.800}
D^k\phi_n(g)(w)=-\mathbb{E}\int_0^{\infty}e^{-ku}D^kg(we^{-u}+\sigma(u)\mathbf{D}_n)du,\quad k=1,2\text{.}
\end{equation}

Now, using (\ref{4.800}) observe that:
\begin{align*}
\text{A)}\quad&\|D\phi_n(g)(w)\|\nonumber\\
\leq& \int_{0}^\infty e^{-u}\mathbb{E}\|Dg(we^{-u}+\sigma(u)\mathbf{D}_n)\|du\nonumber\\
\leq& \|g\|_M\int_0^\infty \left(e^{-u}+2\|w\|^2e^{-3u}+2\mathbb{E}\|\mathbf{D}_n\|^2(e^{-u}-e^{-3u})\right)du\nonumber\\
\leq& \|g\|_{ M}\left(1+\frac{2}{3}\|w\|^2+\frac{4}{3}\mathbb{E}\|\mathbf{D}_n\|^2\right)\text{,}\nonumber\\
\text{B)}\quad&\|D^2\phi_n(g)(w)\|\nonumber\\
\leq &\int_0^\infty e^{-2u}\mathbb{E}\left\|D^2g(we^{-u}+\sigma(u)\mathbf{D}_n)\right\|du\nonumber\\
\leq& \|g\|_M\int_0^\infty e^{-2u}(1+\mathbb{E}\|we^{-u}+\sigma(u)\mathbf{D}_n\|)du\nonumber\\
\leq& \|g\|_M\left(\frac{1}{2}+\frac{\|w\|}{3}+\frac{\mathbb{E}\|\mathbf{D}_n\|}{3}\right)\nonumber,\\
\text{C)}\quad&\frac{\|D^2\phi_n(g)(w+h)-D^2\phi_n(g)(w)\|}{\|h\|}\nonumber\\
\leq&\|h\|^{-1}\left\|\mathbb{E}\int_0^\infty e^{-2u}D^2g((w+h)e^{-u}+\sigma(u)\mathbf{D}_n)-e^{-2u}D^2g(we^{-u}+\sigma(u)\mathbf{D}_n)du\right\|\nonumber\\
\leq&\sup_{w,h\in D^p}\frac{\|D^2g(w+h)-D^2g(w)\|}{\|h\|}\int_0^\infty e^{-2u}e^{-u}du\\
=&\sup_{w,h\in D^p}\frac{\|D^2(g+c)(w+h)-D^2(g+c)(w)\|}{3\|h\|}\text{,}
\end{align*}
uniformly in $g\in M$, for any constant $c$, which proves (\ref{norm_bound}).
\end{proof}
\subsection{An auxiliary result}
We now move to proving the main results of the paper.
We start with an auxiliary lemma in which we use Stein's method combined with Taylor expansions to bound the distance between $\mathbf{Y}_n$, as defined in Theorem \ref{local3} and $\mathbf{D}_n$, as defined in (\ref{d_n}). This result is of independent interest and will be used in all the proofs in this Section.
\begin{lemma}\label{lemma_aux}
Consider the setup of Theorem \ref{local3}. Let $\mathbf{D}_n$ be defined as in (\ref{d_n}) for the covariance matrix $\Sigma_n$ equal to the covariance matrix of \\$\left(X_{1,1},\dots,X_{1,p},\dots,X_{n,1},\dots,X_{n,p}\right)$. Assume that the two collections $\lbrace Z_{i,k}:i=1,\dots,n,\, k=1,\dots,p\rbrace$ and $\lbrace X_{i,k}:i=1,\dots,n,\, k=1,\dots,p\rbrace$ are independent.
Let $g\in M$, as defined in Section \ref{section 2}. Then:
\begin{align*}
&\left|\mathbb{E}g(\mathbf{Y}_n)-\mathbb{E}g(\mathbf{D}_n)\right|\\
\leq&\frac{\|g\|_{M}}{6}\sum_{i=1}^n\mathbb{E}\left\lbrace\left(\sum_{k,l,m=1}^p\left[\left(X_{i,k}\right)^2\|J_{i,k}\|^2\mathbb{1}_{[1,\lambda_k]}(i)\left(\sum_{j\in \mathbb{A}_i}X_{j,l}\|J_{j,l}\|\mathbb{1}_{[1,\lambda_l]}(j)\right)^2\right.\right.\right.\nonumber\\
&\left.\left.\left.\phantom{\frac{\|g\|_{M}}{3}\sum_{i=1}^n\mathbb{E}\lbrace}\cdot\left(\sum_{j\in \mathbb{A}_i}X_{j,m}\|J_{j,m}\|\mathbb{1}_{[1,\lambda_m]}(j)\right)^2\right]\right)^{1/2}\right\rbrace\\
&+\frac{\|g\|_M}{3}\sum_{i=1}^n\sum_{j\in \mathbb{A}_i}\sum_{k,l=1}^p\mathbb{E}\left\lbrace\vphantom{\left[\left(\sum_r\in A_j\right)^2\right]^{1/2}}\left[\vphantom{\left(\sum_r\in A_j\right)^2}\sum_{m=1}^p\left(\vphantom{\sum_r\in A_j}X_{i,k}\,\|J_{i,k}\|\,X_{j,l}\,\|J_{j,l}\|\,\mathbb{1}_{[1,\lambda_k]}(i)\mathbb{1}_{[1,\lambda_l]}(j)\right.\right.\right.\nonumber\\
&\left.\left.\left.\phantom{\frac{\|g\|_M}{3}\sum_{i=1}^n\sum_{j\in \mathbb{A}_i}\sum_{k,l=1}^p\mathbb{E}\lbrace}\cdot\sum_{r\in \mathbb{A}_{ij}\,\cap\, \mathbb{A}_i^c}X_{r,m}\|J_{r,m}\|\mathbb{1}_{[1,\lambda_m]}(r)\right)^2\right]^{1/2}\right\rbrace\\
&+\frac{\|g\|_{M}}{3}\sum_{i=1}^n\sum_{j\in\mathbb{A}_i}\sum_{k,l=1}^p\left\lbrace\vphantom{\left[\sqrt{\left(\sum_1^p\right)^2}\right]}\left|\mathbb{E}\left[X_{i,k}X_{j,l}\right]\right|\mathbb{1}_{[1,\lambda_k]}(i)\mathbb{1}_{[1,\lambda_l]}(j)\right.\nonumber\\
&\phantom{\frac{\|g\|_{M}}{3}\sum_{i=1}^n\sum_{j\in\mathbb{A}_i}\sum_{k,l=1}^p\lbrace}\cdot\left.\mathbb{E}\left[\|J_{i,k}\|\,\|J_{j,l}\|\sqrt{\sum_{m=1}^p\left(\sum_{r\in \mathbb{A}_{ij}}X_{r,m}\|J_{r,m}\|\mathbb{1}_{[1,\lambda_m]}(r)\right)^2}\right]\right\rbrace.
\end{align*}
\end{lemma}
The proof of Lemma \ref{lemma_aux} is based on manipulating the Stein operator, given in Proposition \ref{prop12.7}, using Taylor's theorem. 

\begin{proof}[Proof of Lemma \ref{lemma_aux}]
Let $g_n=g-\mathbb{E}g(\mathbf{D}_n)$ and $f_n=\phi_n(g_n)$, as defined in (\ref{phi}). From Proposition \ref{prop12.7} we know that:
$$\left|\mathbb{E}g(\mathbf{Y}_n)-\mathbb{E}g(\mathbf{D}_n)\right|=\left|\mathbb{E}\left[Df_n(\mathbf{Y}_n)\left[\mathbf{Y}_n\right]-D^2f_n(\mathbf{Y}_n)\left[\mathbf{D}_n,\mathbf{D}_n\right]\right]\right|.$$

Let 
$$\mathbf{Y}_n^j=\sum_{k\in \mathbb{A}_j^c}\left(X_{k,1}\mathbb{1}_{[1,\lambda_1]}(k)J_{k,1},\dots,X_{k,p}\mathbb{1}_{[1,\lambda_p]}(k)J_{k,p}\right)$$ and 
$$\mathbf{Y}_n^{ij}=\sum_{k\in \mathbb{A}_{ij}^c}\left(X_{k,1}\mathbb{1}_{[1,\lambda_1]}(k)J_{k,1},\dots,X_{k,p}\mathbb{1}_{[1,\lambda_p]}(k)J_{k,p}\right).$$ Hence, $\mathbf{Y}_n^j$ is independent of $X_j$ for all $j$ and $\mathbf{Y}_n^{ij}$ is independent of $(X_i,X_j)$ for all $i,j$. Therefore 
$$\mathbb{E}Df_n(\mathbf{Y}_n^i)\left[\left(X_{i,1}\mathbb{1}_{[1,\lambda_1]}(i)J_{i,1,},\dots, X_{i,p}\mathbb{1}_{[1,\lambda_p]}(i)J_{i,p}\right)\right]=0.$$
For $\lbrace e_k:k=1,\dots,p\rbrace$ denoting the elements of the canonical basis of $\mathbb{R}^p$ and for $i\in\lbrace 1,\dots,n\rbrace$, we have the following identities and inequalities (note that inequality $(\ast)$ follows from Taylor's theorem): 
\begin{align}
&\left|\vphantom{\sum_j}\mathbb{E}Df_n(\mathbf{Y}_n)\left[\left(X_{i,1}\mathbb{1}_{[1,\lambda_1]}(i)J_{i,1},\dots, X_{i,p}\mathbb{1}_{[1,\lambda_p]}(i)J_{i,p}\right)\right]\right.\nonumber\\
&\left.-\mathbb{E}\left[\sum_{j\in \mathbb{A}_i}\sum_{k,l=1}^p\left(X_{i,k}\mathbb{1}_{[1,\lambda_k]}(i)\right)\left(X_{j,l}\mathbb{1}_{[1,\lambda_l]}(j)\right)D^2f_n(\mathbf{Y}_n^i)\left[e_kJ_{i,k},e_lJ_{j,l}\right]\right]\right|\nonumber\\
=&\left|\vphantom{\sum_j}\mathbb{E}Df_n(\mathbf{Y}_n)\left[\left(X_{i,1}\mathbb{1}_{[1,\lambda_1]}(i)J_{i,1},\dots, X_{i,p}\mathbb{1}_{[1,\lambda_p]}(i)J_{i,p}\right)\right]\right.\nonumber\\
&-\mathbb{E}Df_n(\mathbf{Y}_n^i)\left[\left(X_{i,1}\mathbb{1}_{[1,\lambda_1]}(i)J_{i,1},\dots, X_{i,p}\mathbb{1}_{[1,\lambda_p]}(i)J_{i,p}\right)\right]\nonumber\\
&-\mathbb{E}D^2f_n(\mathbf{Y}_n^i)\left[\vphantom{\sum_j}\left(X_{i,1}\mathbb{1}_{[1,\lambda_1]}(i)J_{i,1},\dots, X_{i,p}\mathbb{1}_{[1,\lambda_p]}(i)J_{i,p}\right),\right.\nonumber\\
&\phantom{-\mathbb{E}Df_n(\mathbf{Y}_n^i)[}\left.\left.\sum_{j\in \mathbb{A}_i}\left(X_{j,1}\mathbb{1}_{[1,\lambda_1]}(j)J_{j,1},\dots X_{j,p}\mathbb{1}_{[1,\lambda_p]}(j)J_{j,p}\right)\right]\right|\nonumber\\
\stackrel{(\ast)}\leq&\frac{1}{2}\sup_{w,h\in D^p}\frac{\left\|D^2f_n(w+h)-D^2f_n(w)\right\|}{\|h\|}\nonumber\\
&\cdot\mathbb{E}\left[\vphantom{\left\|\sum_j\right\|^2}\left\|\left(X_{i,1}\mathbb{1}_{[1,\lambda_1]}(i)J_{i,1},\dots, X_{i,p}\mathbb{1}_{[1,\lambda_p]}(i)J_{i,p}\right)\right\|\right.\nonumber\\
&\left.\cdot\left\|\sum_{j\in \mathbb{A}_i}\left(X_{j,1}\mathbb{1}_{[1,\lambda_1]}(j)J_{j,1},\dots,X_{j,p}\mathbb{1}_{[1,\lambda_p]}(j)J_{j,p}\right)\right\|\|Y_n-Y_n^i\|\right]\nonumber\\
\stackrel{(\ref{norm_bound})C)}\leq&\frac{\|g\|_{M}}{6}\mathbb{E}\left[\vphantom{\left\|\sum_j\right\|^2}\left\|\left(X_{i,1}\mathbb{1}_{[1,\lambda_1]}(i)J_{i,1},\dots, X_{i,p}\mathbb{1}_{[1,\lambda_p]}(i)J_{i,p}\right)\right\|\right.\nonumber\\
&\left.\cdot\left\|\sum_{j\in \mathbb{A}_i}\left(X_{j,1}\mathbb{1}_{[1,\lambda_1]}(j)J_{j,1},\dots,X_{j,p}\mathbb{1}_{[1,\lambda_p]}(j)J_{j,p}\right)\right\|\|Y_n-Y_n^i\|\right]\nonumber\\
=&\frac{\|g\|_{M}}{6}\mathbb{E}\left[\vphantom{\left\|\sum_j\right\|^2}\left\|\left(X_{i,1}\mathbb{1}_{[1,\lambda_1]}(i)J_{i,1},\dots, X_{i,p}\mathbb{1}_{[1,\lambda_p]}(i)J_{i,p}\right)\right\|\right.\nonumber\\
&\left.\cdot\left\|\sum_{j\in \mathbb{A}_i}\left(X_{j,1}\mathbb{1}_{[1,\lambda_1]}(j)J_{j,1},\dots,X_{j,p}\mathbb{1}_{[1,\lambda_p]}(j)J_{j,p}\right)\right\|^2\right]\nonumber\\
\leq& \frac{\|g\|_{M}}{6}\mathbb{E}\left\lbrace\left(\sum_{k,l,m=1}^p\left[\left(X_{i,k}\right)^2\|J_{i,k}\|^2\mathbb{1}_{[1,\lambda_k]}(i)\left(\sum_{j\in \mathbb{A}_i}X_{j,l}\|J_{j,l}\|\mathbb{1}_{[1,\lambda_l]}(j)\right)^2\right.\right.\right.\nonumber\\
&\left.\left.\left.\cdot\left(\sum_{j\in \mathbb{A}_i}X_{j,m}\|J_{j,m}\|\mathbb{1}_{[1,\lambda_m]}(j)\right)^2\right]\right)^{1/2}\right\rbrace.\label{12.15}
\end{align}

Furthermore, for all $i,j\in\lbrace 1,\dots, n\rbrace$,
\begin{align}
&\left|\mathbb{E}\left[X_{i,k}\mathbb{1}_{[1,\lambda_k]}(i)X_{j,l}\mathbb{1}_{[1,\lambda_l]}(j)D^2f_n(\mathbf{Y}_n^i)\left[e_kJ_{i,k},e_lJ_{j,l}\right]\right]\right.\nonumber\\
&\left.-\mathbb{E}\left[X_{i,k}\mathbb{1}_{[1,\lambda_k]}(i)X_{j,l}\mathbb{1}_{[1,\lambda_l]}(j)D^2f_n(\mathbf{Y}_n^{i,j})\left[e_kJ_{i,k},e_lJ_{j,l}\right]\right]\right|\nonumber\\
\stackrel{(\ref{norm_bound})C)}\leq& \frac{\|g\|_{M}}{3}\mathbb{E}\left\lbrace\vphantom{\left[\left(\sum_r\in A_j\right)^2\right]^{1/2}}\left[\vphantom{\left(\sum_r\in A_j\right)^2}\sum_{m=1}^p\left(\vphantom{\sum_r\in A_j}X_{i,k}\,\|J_{i,k}\|\,X_{j,l}\,\|J_{j,l}\|\,\mathbb{1}_{[1,\lambda_k]}(i)\mathbb{1}_{[1,\lambda_l]}(j)\right.\right.\right.\nonumber\\
&\left.\left.\left.\cdot\sum_{r\in \mathbb{A}_{ij}\,\cap\, \mathbb{A}_i^c}X_{r,m}\|J_{r,m}\|\mathbb{1}_{[1,\lambda_m]}(r)\right)^2\right]^{1/2}\right\rbrace\label{12.16}
\end{align}
and
\begin{align}
&\left|\mathbb{E}\left[X_{i,k}\mathbb{1}_{[1,\lambda_k]}(i)X_{j,l}\mathbb{1}_{[1,\lambda_l]}(j)D^2f_n\left(\mathbf{Y}_n^{i,j}\right)\left[e_kJ_{i,k},e_lJ_{j,l}\right]\right]\right.\nonumber\\
&\left.-\mathbb{E}\left[X_{i,k}X_{j,l}\right]\mathbb{1}_{[1,\lambda_k]}(i)\mathbb{1}_{[1,\lambda_l]}(j)\mathbb{E}\left[D^2f_n(\mathbf{Y}_n)\left[e_kJ_{i,k},e_lJ_{j,l}\right]\right]\right|\nonumber\\
=&\left|\mathbb{E}\left[X_{i,k}X_{j,l}\right]\mathbb{1}_{[1,\lambda_k]}(i)\mathbb{1}_{[1,\lambda_l]}(j)\mathbb{E}\left[\left(D^2f_n(\mathbf{Y}_n)-D^2f_n\left(\mathbf{Y}_n^{i,j}\right)\right)\left[e_kJ_{i,k},e_lJ_{j,l}\right]\right]\right|\nonumber\\
\stackrel{(\ref{norm_bound})C)}\leq&\frac{\|g\|_{M}}{3}\left|\mathbb{E}\left[X_{i,k}X_{j,l}\right]\right|\mathbb{1}_{[1,\lambda_k]}(i)\mathbb{1}_{[1,\lambda_l]}(j)\nonumber\\
&\cdot\mathbb{E}\left[\|J_{i,k}\|\,\|J_{j,l}\|\sqrt{\sum_{m=1}^p\left(\sum_{r\in \mathbb{A}_{ij}}X_{r,m}\|J_{r,m}\|\mathbb{1}_{[1,\lambda_m]}(r)\right)^2}\right].\label{12.17}
\end{align}
Summing (\ref{12.15}) over $i=1,\dots, n$ and (\ref{12.16}) and (\ref{12.17}) over $i=1,\dots, n$, $j\in \mathbb{A}_i$ and $k,l=1,\dots, p$ will give us a bound on $\left|\mathbb{E}\mathcal{A}_ng(\mathbf{Y}_n)\right|$, as defined in Proposition \ref{prop12.7}, i.e. a bound on $\left|\mathbb{E}g(\mathbf{Y}_n)-\mathbb{E}g(\mathbf{D}_n)\right|$.
\end{proof}
\subsection{Proof of Theorem \ref{local3}}
In the proof of Theorem \ref{local3} below, we will use auxiliary processes $\tilde{\mathbf{D}}_n$ and $\tilde{\mathbf{A}}_n$. In order to define them, we let $\left(\tilde{Z}_{1,1},\dots, \tilde{Z}_{1,p},\tilde{Z}_{2,1},\dots,\tilde{Z}_{2,p},\dots, \tilde{Z}_{n,1},\dots, \tilde{Z}_{n,p}\right)$ be a centred Gaussian vector with the same covariance as that of \\$\left(X_{1,1},\dots,X_{1,p},\dots,X_{n,1},\dots,X_{n,p}\right)$ and independent of \\$\left(X_{1,1},\dots,X_{1,p},\dots,X_{n,1},\dots,X_{n,p}\right)$. We also let $\left\lbrace\left(Z_{i,1},\dots,Z_{i,p}\right):\,i=1,\dots,n\right\rbrace$ be a collection of i.i.d. Gaussian vectors with mean zero and covariance $\Sigma$, independent of the collections $\lbrace J_{i,k}:\,i=1,\dots,n,\,k=1,\dots,p\rbrace$ and $\lbrace X_{i,k}:\,i=1,\dots,n,\,k=1,\dots,p\rbrace$. The auxiliary processes are defined for $t\in[0,1]$ in the following way:
\begin{align}
&\tilde{\mathbf{D}}_n(t)=\left(\sum_{i=1}^{ \lambda_1}\tilde{Z}_{i,1}\mathbb{1}_{[i/\lambda_1,1]}(t),\dots,\sum_{i=1}^{\lambda_p}\tilde{Z}_{i,p}\mathbb{1}_{[i/\lambda_p,1]}(t)\right);\label{tilde_d_n}\\
&\tilde{\mathbf{A}}_n(t)=\left(\frac{1}{\sqrt{\lambda_1}}\sum_{i=1}^{\lambda_1}Z_{i,1}\mathbb{1}_{[i/\lambda_1,1]}(t),\dots,\frac{1}{\sqrt{\lambda_p}}\sum_{i=1}^{\lambda_p}Z_{i,p}\mathbb{1}_{[i/\lambda_p,1]}(t)\right).\label{tilde_a_n}
\end{align}

 \textbf{Step 1} of the proof below makes a straightforward use of the mean value theorem to bound the distance between $\mathbf{D}_n$, as defined by (\ref{d_n}) and $\tilde{\mathbf{D}}_n$. In \textbf{Step 2} the distance between $\tilde{\mathbf{D}}_n$ and $\tilde{\mathbf{A}}_n$ is bounded using bounds on the distance between two multivariate Gaussian distributions (\cite[Proposition 2.8]{reinert_roellin}). In \textbf{Step 3} we couple $\tilde{\mathbf{A}}_n$ and $\mathbf{Z}$ in order to obtain a bound on $\mathbb{E}\|\tilde{\mathbf{A}}_n-\mathbf{Z}\|$ and then apply the mean value theorem again to bound $|\mathbb{E}g(\tilde{\mathbf{A}}_n)-\mathbb{E}g(\mathbf{Z})|$ for all $g\in M^1$. Those three steps combined with Lemma \ref{lemma_aux} yield the assertion. In short:
\begin{align*}
\left|\mathbb{E}g(\mathbf{Y}_n)-\mathbb{E}g(\mathbf{Z})\right|\leq& \underbrace{\left|\mathbb{E}g(\mathbf{Y}_n)-\mathbb{E}g(\mathbf{D}_n)\right|}_{\text{Lemma \ref{lemma_aux}}}+\underbrace{\left|\mathbb{E}g(\mathbf{D}_n)-\mathbb{E}g(\tilde{\mathbf{D}}_n)\right|}_{\text{Step 1}}\\
&+\underbrace{\left|\mathbb{E}g(\tilde{\mathbf{D}}_n)-\mathbb{E}g(\tilde{\mathbf{A}}_n)\right|}_{\text{Step 2}}+\underbrace{\left|\mathbb{E}g(\tilde{\mathbf{A}}_n)-\mathbb{E}g(\mathbf{Z})\right|}_{\text{Step 3}}.
\end{align*}
\begin{proof}[Proof of theorem \ref{local3}]~\\

\textbf{Step 1.} Note that, for $\mathbf{D}_n$ of Lemma \ref{lemma_aux} and $\tilde{\mathbf{D}}_n$ of (\ref{tilde_d_n}),
\begin{align}
&\left|\mathbb{E}g(\mathbf{D}_n)-\mathbb{E}g(\tilde{\mathbf{D}}_n)\right|\nonumber\\
\leq &\|g\|_{M^1}\mathbb{E}\|\mathbf{D}_n-\tilde{\mathbf{D}}_n\|\nonumber\\
\leq&\|g\|_{M^1}\mathbb{E}\left\lbrace\sup_{t\in[0,1]}\sqrt{\sum_{k=1}^p\left[\sum_{i=1}^{\lambda_k}\tilde{Z}_{i,k}\left(J_{i,k}(t)-\mathbb{1}_{[i/\lambda_k,1]}(t)\right)\right]^2}\right\rbrace\nonumber \\
\leq& \|g\|_{M^1}\sum_{k=1}^p\sum_{i=1}^{\lambda_k}\mathbb{E}|\tilde{Z}_{i,k}|\mathbb{E}\left\|J_{i,k}-\mathbb{1}_{[i/\lambda_k,1]}\right\|\nonumber\\
\leq&\|g\|_{M^1}\sum_{k=1}^p\sum_{i=1}^{\lambda_k}\sqrt{\mathbb{E}\left[(X_{i,k})^2\right]}\mathbb{E}\left\|J_{i,k}-\mathbb{1}_{[i/\lambda_k,1]}\right\|
\label{5.111},
\end{align}
giving $\epsilon_7$.

\textbf{Step 2.} Let $\lambda=\sum_{k=1}^p\lambda_k$ and consider function $f:\mathbb{R}^{\lambda}\to D^p[0,1]$ given by:
$$f\left(x_{1,1},\dots,x_{\lambda_1,1},\dots,x_{1,p},\dots,x_{\lambda_p,p}\right)=\left(\sum_{i=1}^{ \lambda_1}x_{i,1}\mathbb{1}_{[i/\lambda_1,1]},\dots,\sum_{i=1}^{\lambda_p}x_{i,p}\mathbb{1}_{[i/\lambda_p,1]}\right).$$
This function is twice differentiable with:
\begin{align*}
\text{A)}\quad &Df(x)[(h_{1,1},\dots,h_{\lambda_1,1},\dots,h_{1,p},\dots,h_{\lambda_p,p})]&\\
=&\left(\sum_{i=1}^{\lambda_1}h_{i,1}\mathbb{1}_{[i/\lambda_1,1]},\dots,\sum_{i=1}^{\lambda_p}h_{i,p}\mathbb{1}_{[i/\lambda_p,1]}\right)&\\
\text{B)}\quad &D^2f(x)[h^{(1)},h^{(2)}]=0&
\end{align*}
for all $x,h=(h_{1,1},\dots,h_{\lambda_1,1},\dots,h_{1,p},\dots,h_{\lambda_p,p}),h^{(1)},h^{(2)}\in\mathbb{R}^{np}$. We notice that for the canonical basis vectors $e_i,e_j\in\mathbb{R}^{np}$ we have:
$$\left|D^2(g\circ f)(x)[e_i,e_j]\right|=\left|D^2g(f(x))[Df(x)[e_i],Df(x)[e_j]]\right|\leq \sup_{w\in D}\|D^2g(w)\|$$
for all $x\in\mathbb{R}^{np}$. This follows from the fact that $|Df(x)[e_i]|=1$. Therefore, we can apply \cite[Proposition 2.8]{reinert_roellin} to the function $g\circ f$ and, recalling the definitions of $\tilde{\mathbf{D}}_n$ in (\ref{tilde_d_n}) and $\tilde{\mathbf{A}}_n$ in (\ref{tilde_a_n}), obtain
\begin{align}
&|\mathbb{E}g(\tilde{\mathbf{A}}_n)-\mathbb{E}g(\tilde{\mathbf{D}}_n)|\nonumber\\
\leq& \frac{1}{2}\|g\|_{M^1}\sum_{k,l=1}^p\left[\sum_{i=1}^{\lambda_k}\sum_{j\in \mathbb{A}_i\setminus\lbrace i\rbrace}\left|\mathbb{E}\left[X_{i,k}X_{j,l}\right]\right|+\sum_{i=1}^{\lambda_k\wedge\lambda_l}\left|\frac{\Sigma_{k,l}}{\sqrt{\lambda_k\lambda_l}}-\mathbb{E}[X_{i,k}X_{i,l}]\right|\right],\label{5.11}
\end{align}
giving $\epsilon_4+\epsilon_5$.

\textbf{Step 3.} We now realise a $p$-dimensional Brownian motion $\mathbf{B}$ and let $\mathbf{Z}=\Sigma^{1/2}\mathbf{B}$. We also let $$\tilde{\mathbf{A}}_n^{(j)}(t)=\mathbf{Z}^{(j)}\left(l/\lambda_j\right),\quad\text{for } t\in\left[l/\lambda_j,(l+1)/\lambda_j\right)$$ 
for every $j=1,\dots,p$, which agrees in distribution with our original definition (\ref{tilde_a_n}) of $\tilde{\mathbf{A}}_n=\left(\tilde{\mathbf{A}}_n^{(1)},\dots,\tilde{\mathbf{A}}_n^{(p)}\right)$. Now, note that, using Jensen's inequality , we have:
\begin{align*}
\mathbb{E}\|\tilde{\mathbf{A}}_n-\mathbf{Z}\|\leq& \left(\sum_{i=1}^p\mathbb{E}\left\|\tilde{\mathbf{A}}_n^{(i)}-\mathbf{Z}^{(i)}\right\|^2\right)^{1/2}\\
=&\sqrt{\sum_{i=1}^p\mathbb{E}\sup_{t\in[0,1]}\left|\mathbf{Z}^{(i)}(t)-\mathbf{Z}^{(i)}\left(\frac{\lfloor \lambda_it\rfloor}{\lambda_i}\right)\right|^2}\\
\leq&\left(\sum_{i=1}^p\Sigma_{i,i}\right)^{1/2}\sqrt{\sum_{i=1}^p\mathbb{E}\sup_{t\in[0,1]}\left|\mathbf{B}^{(i)}(t)-\mathbf{B}^{(i)}\left(\frac{\lfloor \lambda_it\rfloor}{\lambda_i}\right)\right|^2}\\
\leq&\frac{6\sqrt{5}}{\sqrt{2\log 2}}\left(\sqrt{\sum_{i=1}^p \frac{\log\left(2\lambda_i\right)}{\lambda_i}}\right)\left(\sum_{i=1}^p\Sigma_{i,i}\right)^{1/2},
\end{align*}
where the third inequality follows because $\left\|\Sigma^{1/2}\right\|_2=\sqrt{\lambda_{\text{max}}\left(\Sigma\right)}\leq\left(\sum_{i=1}^p\Sigma_{i,i}\right)^{1/2}$, where $\lambda_{\text{max}}\left(\Sigma\right)$ denotes the largest eigenvalue of $\Sigma$ and the last inequality follows by \cite[Lemma 3]{ito_processes}. Therefore:
\begin{align}
|\mathbb{E}g(\tilde{\mathbf{A}}_n)-\mathbb{E}g(\mathbf{Z})|\stackrel{\text{MVT}}\leq& \sup_{w\in D^p}\|Dg(w)\|\mathbb{E}\|\mathbf{Z}-\tilde{\mathbf{A}}_n\|\nonumber\\
\leq& \|g\|_{M^1}\frac{6\sqrt{5}}{\sqrt{2\log 2}}\left(\sqrt{\sum_{i=1}^p \frac{\log\left(2\lambda_i\right)}{\lambda_i}}\right)\left(\sum_{i=1}^p\Sigma_{i,i}\right)^{1/2},\label{5.12}
\end{align}
giving $\epsilon_6$.

Now, Lemma \ref{lemma_aux} (which gives $\epsilon_1+\epsilon_2+\epsilon_3$), combined with (\ref{5.11}), (\ref{5.111}), (\ref{5.12}), yields the assertion.
\end{proof}

\subsection{Proof of Proposition \ref{local1}}
The proof of Proposition \ref{local1} below is similar to that of Lemma \ref{lemma_aux} and \textbf{Step 3} of the proof of Theorem \ref{local3}. Due to the independence of the summands, the bound on the distance between $\mathbf{Y}_n$ and the pre-limiting Gaussian process has a simpler form than the one appearing in Theorem \ref{local3}. We now work with all $g\in M$, contrary to what is done in the proof of Theorem \ref{local3}. Hence, we need to bound both the first and second moment of the supremum distance between the pre-limiting process and the correlated Brownian motion. This is necessary for the mean value theorem to be applied in the final step.
\begin{proof}[Proof of Proposition \ref{local1}]
Let $\mathbf{D}_n$ be as in (\ref{d_n}) with $\Sigma_n$ such that the vectors $\left(\tilde{Z}_i\right)_{i=1}^n$ are i.i.d with the same covariance structure as that of $\left(X_i\right)_{i=1}^n$ and for all $i=1,\dots,n$ and $k=1,\dots,p$, $J_{i,k}=\mathbb{1}_{[i/n,1]}$. Let $g\in M$, $g_n=g-\mathbb{E}[g(\mathbf{D}_n)]$, $f_n=\phi_n(g_n)$, as in (\ref{phi}). 

Note that for $\mathbf{Y}_n^j=\mathbf{Y}_n-\frac{1}{\sqrt{n}}X_j\mathbb{1}_{[j/n,1]}$, $j=1,\dots, n$,  $\mathbf{Y}_n^{j}$ is independent of $X_j$ and
\begin{align}
&\left|n^{-1/2}\mathbb{E}Df_n(\mathbf{Y}_n)\left[X_j\mathbb{1}_{[j/n,1]}\right]-n^{-1}\sum_{k,l=1}^p\Sigma_{k,l}\mathbb{E}D^2f_n(\mathbf{Y}_n^j)\left[e_k\mathbb{1}_{[j/n,1]},e_l\mathbb{1}_{[j/n,1]}\right]\right|\nonumber\\
=&\left|n^{-1/2}\mathbb{E}Df_n(\mathbf{Y}_n)\left[X_j\mathbb{1}_{[j/n,1]}\right]-n^{-1/2}\mathbb{E}Df_n(\mathbf{Y}_n^j)\left[X_j\mathbb{1}_{[j/n,1]}\right]\right.\nonumber\\
&\left.-n^{-1}\mathbb{E}D^2f_n(\mathbf{Y}_n^j)\left[X_j\mathbb{1}_{[j/n,1]},X_j\mathbb{1}_{[j/n,1]}\right]\vphantom{n^{-1/2}}\right|\nonumber\\
\leq&\frac{n^{-3/2}}{2}\sup_{w,h\in D^p}\frac{\left\|D^2f_n(w+h)-D^2f_n(w)\right\|}{\|h\|}\mathbb{E}\left\|X_j\mathbb{1}_{[j/n,1]}\right\|^3\nonumber\\
\leq& n^{-3/2}\frac{\|g\|_M}{6}\mathbb{E}\left\|X_j\mathbb{1}_{[j/n,1]}\right\|^3\nonumber\\
=&n^{-3/2}\frac{\|g\|_M}{6}\mathbb{E}\left[\left(\left(X_j^{(1)}\right)^2+\dots+\left(X_j^{(p)}\right)^2\right)^{3/2}\right]\nonumber\\
\leq &p^{1/2}n^{-3/2}\frac{\|g\|_M}{6}\sum_{m=1}^p\mathbb{E}\left|X_j^{(m)}\right|^3,\label{12.4}
\end{align}
where the first inequality follows by Taylor's theorem and the second one by (\ref{norm_bound})C).
Also, by (\ref{norm_bound})C):
\begin{align}
&\left|n^{-1}\sum_{k,l=1}^p\Sigma_{k,l}\mathbb{E}D^2f_n(\mathbf{Y}_n^j)\left[e_k\mathbb{1}_{[j/n,1]},e_l\mathbb{1}_{[j/n,1]}\right]\right.\nonumber\\
&\left.-n^{-1}\sum_{k,l=1}^p\Sigma_{k,l}\mathbb{E}D^2f_n(\mathbf{Y}_n)\left[e_k\mathbb{1}_{[j/n,1]},e_l\mathbb{1}_{[j/n,1]}\right]\right|\nonumber\\
\leq&n^{-3/2}\frac{\|g\|_M}{3}\sum_{k,l=1}^p\left|\Sigma_{k,l}\right|\left(\sum_{m=1}^p\mathbb{E}\left|X_j^{(m)}\right|^2\right)^{1/2}.\label{12.5}
\end{align}
Let us now realise a $p$-dimensional Brownian motion $\mathbf{B}$ and let $\mathbf{Z}=\Sigma^{1/2}\mathbf{B}$. We realise it in such a way that $\Sigma^{-1/2}\mathbf{D}_n(j/n)=\mathbf{B}(j/n)$ for every $j=1,...,n$, which agrees in distribution with our original definition of $\mathbf{D}_n$. Now, note that, by \cite[Lemma 3]{ito_processes} and Doob's $L^3$ inequality:
\begin{align*}
\text{A)      }\quad \mathbb{E}\|\mathbf{Z}-\mathbf{D}_n\|\leq& \sqrt{\sum_{i=1}^p\mathbb{E}\left\|\mathbf{Z}^{(i)}-\mathbf{D}_n^{(i)}\right\|^2}\leq \frac{6\sqrt{5}}{\sqrt{2\log 2}}n^{-1/2}\sqrt{\log 2n}\left(\sum_{i=1}^p\left|\Sigma_{i,i}\right|\right)^{1/2};\\
\text{B)}\quad \mathbb{E}\left\|\mathbf{Z}-\mathbf{D}_n\right\|^3\leq& p^{1/2}\sum_{i=1}^p\mathbb{E}\|\mathbf{Z}^{(i)}-\mathbf{D}_n^{(i)}\|^3\\
\leq& p^{1/2}\frac{1080}{\sqrt{\pi}(\log 2)^{3/2}}n^{-3/2}(\log 2n)^{3/2}\sum_{i=1}^p\left|\Sigma_{i,i}\right|^{3/2};\\
\text{C)              }\quad (\mathbb{E}\|\mathbf{Z}\|^3)^{2/3}\leq& \left(p^{1/2}\sum_{i=1}^p\mathbb{E}\|\mathbf{Z}^{(i)}\|^3\right)^{2/3}\leq \frac{9p^{1/3}}{2\pi^{1/3}}\left(\sum_{i=1}^p\left|\Sigma_{i,i}\right|^{3/2}\right)^{2/3}.
\end{align*}
Therefore:
\begin{align}
&|\mathbb{E}g(\mathbf{D}_n)-\mathbb{E}g(\mathbf{Z})|\nonumber\\
\stackrel{\text{MVT}}\leq&\mathbb{E}\left[\sup_{c\in[0,1]}\|Dg(\tilde{\mathbf{Z}}+c(\mathbf{D}_n-\mathbf{Z}))\|\|\mathbf{Z}-\mathbf{D}_n\|\right]\nonumber\\
\leq&\|g\|_{ M}\mathbb{E}\left[\sup_{c\in[0,1]}(1+\|\mathbf{Z}+c(\mathbf{D}_n-\mathbf{Z})\|^2)\|\mathbf{Z}-\mathbf{D}_n\|\right]\nonumber\\
\leq &\|g\|_{ M}\left\lbrace \mathbb{E}\|\mathbf{Z}-\mathbf{D}_n\|+2\mathbb{E}\|\mathbf{Z}-\mathbf{D}_n\|^3+2(\mathbb{E}\|\mathbf{Z}\|^3)^{2/3}(\mathbb{E}\|\mathbf{D}_n-\mathbf{Z}\|^3)^{1/3}\right\rbrace\nonumber\\
\leq&\|g\|_M\left\lbrace n^{-1/2}\sqrt{\log 2n}\left[\frac{6\sqrt{5}}{\sqrt{2\log 2}}\left(\sum_{i=1}^p\left|\Sigma_{i,i}\right|\right)^{1/2}+\frac{54\cdot 5^{1/3}p^{1/2}}{\sqrt{\pi\log 2}}\sum_{i=1}^p|\Sigma_{i,i}|^{3/2}\right]\right.\nonumber\\
&\left.+ n^{-3/2}(\log 2n)^{3/2}p^{1/2}\frac{2160}{\sqrt{\pi}(\log 2)^{3/2}}\sum_{i=1}^p\left|\Sigma_{i,i}\right|^{3/2}\right\rbrace.\label{12.7}
\end{align}
We now sum (\ref{12.4}) and (\ref{12.5}) and sum them over $j$, which, combined with (\ref{12.7}) yields the result.
\end{proof}
\subsection{Proof of Theorem \ref{theorem_u_stats}}
In \textbf{Step 1} of the proof of Theorem \ref{theorem_u_stats} below, we consider a scaled sum of i.i.d random variables $w(X_i)$ and apply Lemma \ref{lemma_aux} together with an argument similar to \textbf{Step 1} and \textbf{Step 3} of the proof of Theorem \ref{local3} in order to bound the distance between this scaled sum and $\mathbf{Z}$. In \textbf{Step 2} we bound the distance between this scaled sum and our original process $\mathbf{Y}_n$ by bounding the second moment of the supremum distance between them and then using the mean value theorem. 
\begin{proof}[Proof of Theorem \ref{theorem_u_stats}]
Let $g\in M^2$. 

\textbf{Step 1.} As in the proof of the invariance principle for U-statistics of \cite{hall}, we start by considering the behaviour of the following process $(\tilde{\mathbf{Y}}_n(t),t\geq 0)$:
\begin{align*}
\tilde{\mathbf{Y}}_n(t)=&\frac{n^{-3/2}}{\sigma_wt}\sum_{1\leq i_1<i_2\leq \lfloor nt\rfloor}\left(w(X_{i_1})+w(X_{i_2})\right)
=
\frac{1}
{\sqrt{n}\sigma_w}\sum_{i=1}^n w(X_i)J_{i,n}(t)\text{,}
\end{align*}
where $J_{i,n}(t)=\frac{(\lfloor nt\rfloor -1)\mathbb{1}_{[i/n,1]}(t)}{nt}$. Recall that $w(x)=\mathbb{E}h(X_1,x)$.
Let $\mathbf{A}_n(t)=n^{-1/2}\sum_{i=1}^n Z_iJ_{i,n}(t)$ and $\hat{\mathbf{A}}_n(t)=n^{-1/2}\sum_{i=1}^{\lfloor nt\rfloor} Z_i$, where $Z_i\stackrel{\text{i.i.d.}}\sim\mathcal{N}(0,1)$. 

Note that Lemma \ref{lemma_aux} readily yields that:
\begin{equation}
\left|\mathbb{E}g(\tilde{\mathbf{Y}}_n)-\mathbb{E}g(\mathbf{A}_n)\right|\leq \frac{\|g\|_M}{6\sigma_w^3}n^{-1/2}\left(\mathbb{E}\left|w(X_1)\right|^3+2\sigma_w^2\mathbb{E}|w(X_1)|\right),\label{5.13}
\end{equation}
as $\|J_{i,n}\|\leq 1$ for all $i,n\in\mathbb{N}$ and $w(X_i)$'s for $i=1,\dots, n$ are independent.

We see that, by Doob's $L^2$ inequality, we have for every $m$:
\begin{align*}
&\mathbb{E}\left[\max_{1\leq l\leq m}\left|\sum_{i=1}^lZ_i\right|\right]^2\leq 4m=4\sum_{i=1}^m 1.
\end{align*}
Therefore, using \cite[Theorem 1]{fazekas} for inequality $(\ast)$, we obtain:
\begin{align}
\text{A)}\quad\mathbb{E}\|\mathbf{A}_n-\hat{\mathbf{A}}_n\|^2\leq& n^{-1}\mathbb{E}\left[\max_{1\leq l\leq n}\left|\frac{l-1}{l+1}-1\right|\left|\sum_{i=1}^lZ_i\right|\right]^2\nonumber\\
\leq& n^{-1}4\mathbb{E}\left[\max_{1\leq l\leq n}\left|\frac{\sum_{i=1}^l Z_i}{l+1}\right|\right]^2\nonumber\\
\stackrel{(\ast)}\leq &32n^{-1}\sum_{i=1}^n\frac{1}{i^2}\nonumber\\
\leq&\frac{16\pi^2}{3}n^{-1};\nonumber\\
\text{B)}\quad\mathbb{E}\|\mathbf{A}_n-\hat{\mathbf{A}}_n\|\leq&\sqrt{\mathbb{E}\|\mathbf{A}_n-\hat{\mathbf{A}}_n\|^2}\leq \frac{4\pi}{\sqrt{3}}n^{-1/2}.\label{9.7}
\end{align}
Doob's $L^2$ inequality readily gives us:
\begin{equation}
\mathbb{E}\|\hat{\mathbf{A}}_n\|^2=\mathbb{E}\left[\left(\max_{1\leq m\leq n}n^{-1/2}\left|\sum_{i=1}^m Z_i\right|\right)^2\right]\leq 4.\label{9.77}
\end{equation}
It follows that:
\begin{align}
&|\mathbb{E}g(\mathbf{A}_n)-\mathbb{E}g(\hat{\mathbf{A}}_n)|\nonumber\\\leq &\mathbb{E}\left[\sup_{c\in[0,1]}\|Dg((1-c)\hat{\mathbf{A}}_n+c\mathbf{A}_n)\|\|\mathbf{A}_n-\hat{\mathbf{A}}_n\|\right]\nonumber \\
\leq& \|g\|_{M^2}\mathbb{E}\left[\sup_{c\in[0,1]}(1+\|\hat{\mathbf{A}}_n+c(\mathbf{A}_n-\hat{\mathbf{A}}_n)\|)\|\mathbf{A}_n-\hat{\mathbf{A}}_n\|\right]\nonumber\\
\leq& \|g\|_{M^2}\left(\mathbb{E}\|\mathbf{A}_n-\hat{\mathbf{A}}_n\|+\mathbb{E}\|\mathbf{A}_n-\hat{\mathbf{A}}_n\|^2+\sqrt{\mathbb{E}\|\hat{\mathbf{A}}_n\|^2}\sqrt{\mathbb{E}\|\mathbf{A}_n-\hat{\mathbf{A}}_n\|^2}\right)\nonumber\\
\leq &\|g\|_{M^2}\left(\frac{12\pi}{\sqrt{3}}n^{-1/2}+\frac{16\pi^2}{3}n^{-1}\right),\label{9.13}
\end{align}
where the first inequality follows from the mean value theorem and the last one follows from (\ref{9.7}) and (\ref{9.77}).
Also, by \cite[Lemma 3]{ito_processes} and Doob's $L^2$ inequality:
\begin{align*}
&\text{A)}\quad\mathbb{E}\|\hat{\mathbf{A}}_n-\mathbf{Z}\|\leq \frac{30}{\sqrt{\pi\log 2}}n^{-1/2}\sqrt{\log 2n}\\
&\text{B)}\quad\mathbb{E}\|\hat{\mathbf{A}}_n-\mathbf{Z}\|^2\leq \frac{90}{\log 2}n^{-1}\log 2n\\
&\text{C)}\quad\mathbb{E}\|\mathbf{Z}\|^2\leq4
\end{align*}
and therefore:
\begin{align}
&|\mathbb{E}g(\hat{\mathbf{A}}_n)-\mathbb{E}g(\mathbf{Z})|\nonumber\\
\leq& \|g\|_{M^2}\left(\mathbb{E}\|\hat{\mathbf{A}}_n-\mathbf{Z}\|+\mathbb{E}\|\hat{\mathbf{A}}_n-\mathbf{Z}\|^2+\sqrt{\mathbb{E}\|\mathbf{Z}\|^2}\sqrt{\mathbb{E}\|\hat{\mathbf{A}}_n-\mathbf{Z}\|^2}\right)\nonumber\\
\leq&\|g\|_{M^2}n^{-1/2}\left[\left(\frac{30}{\sqrt{\pi\log 2}}+\frac{12\sqrt{5}}{\sqrt{2\log 2}}\right)\sqrt{\log 2n}+\frac{90}{\log 2} n^{-1/2}\log 2n\right]\text{.}\label{9.12}
\end{align}
\textbf{Step 2.} We now wish to find a bound on $|\mathbb{E}g(\tilde{\mathbf{Y}}_n)-\mathbb{E}g(\mathbf{Y}_n)|$. Note that:
$$\mathbf{Y}_n-\tilde{\mathbf{Y}}_n=\frac{n^{-3/2}}{\sigma_wt}\sum_{1\leq i_1<i_2\leq \lfloor nt\rfloor} \left(h(X_{i_1},X_{i_2})-w(X_{i_1})-w(X_{i_2})\right).$$
Let  $\phi_h^2=\mathbb{E}h^2(X_1,X_2)$. First, note that, if $\mu=\mathcal{L}(X_1)$ (i.e. $\mu$ is the law of $X_1$),
\begin{align*}
&\mathbb{E}\left[\left(h(X_1,X_2)-w(X_1)-w(X_2)\right)\left(h(X_1,X_3)-w(X_1)-w(X_3)\right)\right]\\
=&\mathbb{E}\left[h(X_1,X_2)h(X_1,X_3)\right]-2\mathbb{E}\left[h(X_1,X_2)w(X_1)\right]+\mathbb{E}w^2(X_1)\\
=&\int\int\int h(x,y)h(x,z)\mu(dx)\mu(dy)\mu(dz)\\
&-2\int\int h(x,y)\int h(x,z)\mu(dz)\mu(dx)\mu(dy)\\
&+\int\int h(x,y)\mu(dy)\int h(x,z)\mu(dz)\mu(dx)=0,
\end{align*}
where the first equality follows by the fact that $w(X_2)$ is independent of $h(X_1,X_3)$, $w(X_1)$ and $w(X_3)$, $w(X_3)$ is independent of $h(X_1,X_2)$, $w(X_1)$ and $w(X_2)$, and $\mathbb{E}w(X_2)=\mathbb{E}w(X_3)=0$.
Therefore:
\begin{align}
&\mathbb{E}\left[\sum_{1\leq i_1<i_2\leq m} \left(h(X_{i_1},X_{i_2})-w(X_{i_1})-w(X_{i_2})\right)\right]^2\nonumber\\
=&{m\choose 2}\mathbb{E}\left[h(X_1,X_2)-w(X_1)-w(X_2)\right]^2\nonumber\\
=&{m\choose 2} \left[\sigma_h^2+2\sigma_w^2-4\int\int h(x,y)\int h(x,z)\mu(dz)\mu(dx)\mu(dy)\right]\nonumber\\
=&{m\choose 2}(\sigma_h^2-2\sigma_w^2).\label{9.11}
\end{align}
Now, $\sum_{1\leq i_1<i_2\leq m} \left(h(X_{i_1},X_{i_2})-w(X_{i_1})-w(X_{i_2})\right)$ is a martingale with respect to the filtration $\sigma(X_1,...,X_m)$. Indeed:
\begin{align*}
&\mathbb{E}\left[\left.\sum_{1\leq i_1<i_2\leq m+1} \left(h(X_{i_1},X_{i_2})-w(X_{i_1})-w(X_{i_2})\right)\right|X_1,...,X_m\right]\\
=&\sum_{1\leq i_1<i_2\leq m} \left(h(X_{i_1},X_{i_2})-w(X_{i_1})-w(X_{i_2})\right)\\
&+\mathbb{E}\left[\left.\sum_{i=1}^{m} \left(h(X_{i},X_{m+1})-w(X_{i})-w(X_{m+1})\right)\right|X_1,...,X_m\right]\\
=&\sum_{1\leq i_1<i_2\leq m} \left(h(X_{i_1},X_{i_2})-w(X_{i_1})-w(X_{i_2})\right)+\sum_{i=1}^m\left(\mathbb{E}\left[\left.h(X_i,X_{m+1})\right|X_i\right]-w(X_i)\right)\\
=&\sum_{1\leq i_1<i_2\leq m} \left(h(X_{i_1},X_{i_2})-w(X_{i_1})-w(X_{i_2})\right).
\end{align*}
Hence, Doob's inequalities give us, for every $m$, such that $1\leq m\leq n$:
\begin{equation*}
\begin{aligned}
\mathbb{E}\left[\max_{1\leq l\leq m}\left|\sum_{1\leq i_1<i_2\leq l} \left(h(X_{i_1},X_{i_2})-w(X_{i_1})-w(X_{i_2})\right)\right|\right]^2\stackrel{(\ref{9.11})}\leq 4{m\choose 2}(\sigma_h^2-2\sigma_w^2) .
\end{aligned}
\end{equation*}
Then, by \cite[Theorem 1]{fazekas}, applied with $\beta_i=\alpha_i=i$ and $r=2$, and using the fact that ${m\choose 2}=\sum_{i=1}^m(i-1)$, we obtain:
\begin{align}
 \mathbb{E}\|\mathbf{Y}_n-\tilde{\mathbf{Y}}_n\|^2=&\frac{n^{-3}}{\sigma_w^2}\mathbb{E}\left[\sup_{t\in[0,1]}\left|t^{-1}\sum_{1\leq i_1<i_2\leq \lfloor nt\rfloor} \left(h(X_{i_1},X_{i_2})-w(X_{i_1})-w(X_{i_2})\right)\right|^2\right]\nonumber\\
=&\frac{n^{-1}}{\sigma_w^2}\mathbb{E}\left[\max_{1\leq l\leq n}l^{-1}\left|\sum_{1\leq i_1<i_2\leq l} \left(h(X_{i_1},X_{i_2})-w(X_{i_1})-w(X_{i_2})\right)\right|\right]^2\nonumber \\
\leq &16\left(\frac{\sigma_h^2}{\sigma_w^2}-2\right)\sum_{i=1}^n\frac{1}{i}n^{-1}\leq 16\left(\frac{\sigma_h^2}{\sigma_w^2}-2\right)n^{-1}\log 3n\text{.}\label{9.15}
\end{align}
Also, by Doob's $L^2$ inequality: 
\begin{align}
\mathbb{E}\|\tilde{\mathbf{Y}}_n\|^2=&n^{-3}\mathbb{E}\left[\sup_{t\in[0,1]}\left|\frac{\lfloor nt\rfloor -1}{t}\sum_{i=1}^{\lfloor nt\rfloor}\frac{w(X_i)}{\sigma_w}\right|\right]^2=n^{-1}\mathbb{E}\left[\sup_{1\leq l\leq n}\left|\frac{l-1}{l}\sum_{i=1}^{l}\frac{w(X_i)}{\sigma_w}\right|\right]^2 \leq 4.\label{9.16}
\end{align}
Therefore:
\begin{align}
&|\mathbb{E}g(\mathbf{Y}_n)-\mathbb{E}g(\tilde{\mathbf{Y}}_n)|\nonumber\\
\leq &\mathbb{E}\left[\sup_{c\in[0,1]}\|Dg\left((1-c)\tilde{\mathbf{Y}}_n+c\mathbf{Y}_n\right)\|\|\mathbf{Y}_n-\tilde{\mathbf{Y}}_n\|\right] \nonumber\\
\leq &\|g\|_{M^2}\mathbb{E}\left[\sup_{c\in[0,1]}(1+\|\tilde{\mathbf{Y}}_n+c(\mathbf{Y}_n-\hat{\mathbf{A}}_n)\|)\|\mathbf{Y}_n-\tilde{\mathbf{Y}}_n\|\right]\nonumber\\
\leq &\|g\|_{M^2}\left(\mathbb{E}\|\mathbf{Y}_n-\tilde{\mathbf{Y}}_n\|+\mathbb{E}\|\mathbf{Y}_n-\tilde{\mathbf{Y}}_n\|^2+\sqrt{\mathbb{E}\|\tilde{\mathbf{Y}}_n\|^2}\sqrt{\mathbb{E}\|\mathbf{Y}_n-\tilde{\mathbf{Y}}_n\|^2}\right)\nonumber\\
\leq& \|g\|_{M^2} \left(12\left(\frac{\sigma_h^2}{\sigma_w^2}-2\right)^{1/2}n^{-1/2}\sqrt{\log 3n}+16\left(\frac{\sigma_h^2}{\sigma_w^2}-2\right)n^{-1}\log 3n\right),\label{9.17}
\end{align}
where the first inequality follows from the mean value theorem and the last one follows by (\ref{9.15}) and (\ref{9.16}).

We combine (\ref{5.13}), (\ref{9.13}), (\ref{9.12}) and (\ref{9.17}) to obtain the assertion.
\end{proof}
\begin{remark}
While, in the proof of Theorem \ref{theorem_u_stats} above, it is possible to obtain a bound on $|\mathbb{E}g(\tilde{\mathbf{Y}}_n)-\mathbb{E}g(\mathbf{Z})|$ for any $g\in M$, using methods analogous to those which let us prove Theorem \ref{local3}, the situation becomes more complicated when it comes to approximating the remainder. This is because using Doob's $L^3$ inequality and \cite[Corollary 1]{fazekas} for $\mathbb{E}\|\mathbf{Y}_n-\tilde{\mathbf{Y}}_n\|^3$ gives a bound which does not converge to $0$ with $n$. Therefore, in (\ref{9.17}) we cannot go beyond the second moment of $\|\mathbf{Y}_n-\tilde{\mathbf{Y}}_n\|$. Hence, for our technique of proof, it is necessary that we assume $g\in M^2$, as defined by (\ref{m_2}).
\end{remark}

\begin{remark}
The stronger assumption of $g\in M^1$ in Theorem \ref{theorem_u_stats} would simplify its proof. Namely, using the notation of the proof of Theorem \ref{theorem_u_stats}, we could treat $\tilde{Y}_n$ as a scaled sum of i.i.d. mean zero, variance $1$ random variables $\frac{w(X_i)}{\sigma_w}$. Using (\ref{9.7}) and applying Theorem \ref{local3} gives:
$$\left|\mathbb{E}g(\tilde{\mathbf{Y}}_n)-\mathbb{E}g(\mathbf{Z})\right|\leq\frac{\|g\|_{M^1}}{2}n^{-1/2}\left(\frac{\mathbb{E}|w(X_1)|^3}{\sigma_w^3}+8+10\sqrt{\log 2n}\right)$$
and (\ref{9.17}) could be substituted with:
$$\left|\mathbb{E}g(\mathbf{Y}_n)-\mathbb{E}g(\tilde{\mathbf{Y}}_n)\right|\leq \|g\|_{M^1}\mathbb{E}\|\mathbf{Y}_n-\tilde{\mathbf{Y}}_n\|\stackrel{(\ref{9.15})}\leq \|g\|_M \left(\frac{\sigma_h^2}{\sigma_w^2}-2\right)^{1/2}\frac{4\sqrt{\log 3n}}{n^{1/2}}.$$
\end{remark}

\begin{appendices}
\section{Appendix: Proof of Proposition \ref{prop_m}}
As in the proof of \cite[Proposition 3.1]{functional_combinatorial}, we note that, by Skorokhod's representation theorem, $\mathbf{Z}_n$ and $\mathbf{Z}$ can be defined on the same probability space in such a way that $\|\mathbf{Z}_n-\mathbf{Z}\|\xrightarrow{n\to\infty} 0$ a.s. (as $\mathbf{Z}$ is continuous). The fact that $C([0,1],\mathbb{R}^p)$ equipped with norm $\|\cdot\|$ is separable, by the Stone-Weierstrass theorem, lets us use the argument of the proof of the Skorokhod representation theorem presented in \cite[Chapter 5]{billingsley} and conclude that it is enough to show that $\mathbb{P}[\mathbf{Y}_n\in B]\to\mathbb{P}[\mathbf{Z}\in B]$ for all sets $B=\bigcap_{1\leq l\leq L}B_l$, where $B_l=\lbrace w\in D^p:\|w-s_l\|<\gamma_l\rbrace$, $s_l\in C([0,1],\mathbb{R}^p)$ and $\gamma_l$ is such that $\mathbb{P}[\mathbf{Z}\in\partial B_l]=0$. Let us fix such a set $B$.

Let $\phi:\mathbb{R}^+\to[0,1]$ be a non-increasing, three times continuously differentiable function satisfying, $\phi(x)=1$ for $x\leq 0$ and $\phi(x)=0$ for $x\geq 1$ and  fix some $0<\epsilon,\eta_n\leq 1,p_n\geq 4$. Define $g_{l,n}:D^p\to\mathbb{R}$ by: 
\begin{equation}\label{g_ln}
g_{l,n}(w)=\phi\left(\frac{\left\|\sqrt{(\epsilon\gamma_l)^2+\sum_{i=1}^p\left((w-s_l)^{(i)}\right)^2}\right\|_{p_n}-\gamma_l\sqrt{1+\epsilon^2}}{\eta_n}\right)\text{,}
\end{equation}
where $\|w\|_{p_n}:=\left(\int_0^1|w(t)|^{p_n}dt\right)^{1/p_n}$ for any $w\in D^p$. We have the following result:
\begin{lemma}\label{lemma413}
For any finite $L$:
\begin{align}\label{app4}
\left\|\prod_{l=1}^L g_{l,n}\right\|_{M^0}\leq \tilde{C}p_n^2\eta_n^{-3}.
\end{align}
for a constant $\tilde{C}$ independent of $p_n$ and $\eta_n$ (which might depend on $\epsilon$ or $\gamma_l$'s).
\end{lemma}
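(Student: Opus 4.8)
The plan is to reduce the claim to uniform bounds on the first three Fr\'echet derivatives of $G:=\prod_{l=1}^L g_{l,n}$. By the definition of the $M^0$-norm,
\[
\|G\|_{M^0}=\sup_w|G(w)|+\sup_w\|DG(w)\|+\sup_w\|D^2G(w)\|+\sup_{w,h}\frac{\|D^2G(w+h)-D^2G(w)\|}{\|h\|},
\]
and, provided $G$ is three times Fr\'echet differentiable, the last summand is at most $\kappa\sup_w\|D^3G(w)\|$ for an absolute polarization constant $\kappa$, by the mean value inequality applied to $w\mapsto D^2G(w)$. Hence it suffices to show $\sup_w\|D^mG(w)\|\le\tilde C p_n^2\eta_n^{-3}$ for $m=0,1,2,3$, with $\tilde C$ depending only on $L$, $\phi$, $\epsilon$ and the $\gamma_l$'s.

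Write $F_l(w):=\big\|\sqrt{(\epsilon\gamma_l)^2+\sum_{i=1}^p((w-s_l)^{(i)})^2}\,\big\|_{p_n}$ and $c_l:=\gamma_l\sqrt{1+\epsilon^2}$, so $g_{l,n}(w)=\phi\big((F_l(w)-c_l)/\eta_n\big)$, and set $\rho_l(w)(t):=\sqrt{(\epsilon\gamma_l)^2+|(w-s_l)(t)|^2}$, which satisfies $\rho_l(w)(t)\ge\epsilon\gamma_l>0$ pointwise and $F_l(w)=\big(\int_0^1\rho_l(w)(t)^{p_n}\,dt\big)^{1/p_n}\ge\epsilon\gamma_l$. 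Because $\rho_l(w)$ stays bounded away from $0$ and is bounded above for each fixed $w$, and because $p_n\ge4\ge3$, the functional $w\mapsto F_l(w)$ is three times Fr\'echet differentiable on $D^p$: the inner map $x\mapsto\sqrt{(\epsilon\gamma_l)^2+|x|^2}$ is $C^\infty$ with globally bounded derivatives, $s\mapsto s^{p_n}$ is $C^3$ on the relevant bounded subinterval of $(0,\infty)$, and $t\mapsto t^{1/p_n}$ is smooth away from $0$; since $\phi$ is $C^3$ and, being constant off $[0,1]$, has bounded $\phi',\phi'',\phi'''$, each $g_{l,n}$, hence $G$, is $C^3$. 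I expect the genuine work to lie exactly here: making these differentiations under the integral sign rigorous and recording the explicit chain-rule formulas for this superposition-type operator feeding the $L^{p_n}$-norm on $D^p$. This is precisely why the regularizing term $(\epsilon\gamma_l)^2$ has been inserted inside the square root.

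The key estimates are the uniform bounds
\[
\|DF_l(w)\|\le1,\qquad\|D^2F_l(w)\|\le\frac{Cp_n}{\epsilon\gamma_l},\qquad\|D^3F_l(w)\|\le\frac{Cp_n^2}{(\epsilon\gamma_l)^2},
\]
for an absolute constant $C$ and all $w\in D^p$. They follow by differentiating $F_l=(\int\rho_l^{p_n})^{1/p_n}$, using the pointwise bounds $|D\rho_l(w)[h](t)|\le|h(t)|$ and $0\le D^2\rho_l(w)[h,h](t)\le|h(t)|^2/\rho_l(w)(t)$ (with $D^3\rho_l$ handled the same way), then applying H\"older's inequality repeatedly to the integrals of powers of $\rho_l$ that appear and using $\int\rho_l^{p_n}\ge(\epsilon\gamma_l)^{p_n}$ to absorb the negative powers of $F_l$; the first bound $\|DF_l(w)\|\le1$ is already the H\"older argument used earlier in the paper for the functional $w\mapsto(\int_0^1|w(t)|^r\,dt)^{1/r}$. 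Substituting these into the chain-rule (Fa\`a di Bruno) expansion of $D^mg_{l,n}$ — a finite sum of terms $\phi^{(r)}\big((F_l(w)-c_l)/\eta_n\big)\,\eta_n^{-r}\prod_iD^{k_i}F_l(w)$ with $\sum_ik_i=m$ and $1\le r\le m$, in which a block of size $k$ contributes a factor $O(\eta_n^{-1})$ and $O(p_n^{\max(k-1,0)})$ — gives, for $m\in\{0,1,2,3\}$, $\eta_n\le1$, $p_n\ge1$,
\[
\sup_w\|D^mg_{l,n}(w)\|\le C_m\,\eta_n^{-m}p_n^{\max(m-1,0)}\le C_m\,p_n^2\eta_n^{-3}.
\]

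Finally, expand $D^mG$ by the multilinear Leibniz rule into a finite sum, with coefficients depending only on $L$ and $m$, of products $\prod_{l=1}^LD^{m_l}g_{l,n}(w)$ with $m_1+\dots+m_L=m$. Each such product is bounded by $(\max_jC_j)^L\,\eta_n^{-\sum_lm_l}p_n^{\sum_l\max(m_l-1,0)}$; since $\sum_lm_l=m\le3$ while $\sum_l\max(m_l-1,0)=m-\#\{l:m_l\ge1\}\le\max(m-1,0)\le2$, every term is at most $\tilde Cp_n^2\eta_n^{-3}$ for a suitable $\tilde C=\tilde C(L,\phi,\epsilon,\gamma_1,\dots,\gamma_L)$. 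Taking the maximum over $m=0,1,2,3$ and invoking the reduction of the first paragraph yields $\|\prod_{l=1}^Lg_{l,n}\|_{M^0}\le\tilde Cp_n^2\eta_n^{-3}$, which is (\ref{app4}). The only non-routine ingredient is the regularity and differentiation-under-the-integral argument flagged above; the rest is an elementary combination of H\"older's inequality and the Leibniz rule.
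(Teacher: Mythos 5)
Your proposal is correct and follows essentially the same route as the paper's proof: bound the first three derivatives of the $L^{p_n}$-norm functional (the paper's Steps 1--3 yield exactly your stated bounds $\|DF_l\|\leq 1$, $\|D^2F_l\|\leq 2(p_n-1)/(\epsilon\gamma_l)$, $\|D^3F_l\|\leq 15p_n^2/(\epsilon\gamma_l)^2$, up to the factor $\eta_n^{-1}$ the paper absorbs into $f$), then combine via the chain rule with $\phi$ and the product rule, controlling the Lipschitz term of the $M^0$-norm by the third derivative. The only portion you leave as a sketch --- the explicit differentiation under the integral sign and the repeated H\"older estimates with exponents $p_n/(p_n-2k)$ and $p_n/(2k)$ --- is precisely the bulk of the paper's computation, and your description of how to carry it out is accurate.
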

\begin{proof}
First, $\phi$, $\phi'$, $\phi''$, $\phi'''$ are all everywhere continuous and constant outside of the compact interval $[0,1]$ and therefore bounded. Therefore also $\frac{|\phi''(x+h)-\phi''(x)|}{|h|}$ must be uniformly bounded. 

Furthermore, let 
\begin{equation}\label{f_weak}
f(w)=\frac{\left\|\sqrt{(\epsilon\gamma_l)^2+\sum_{i=1}^p\left((w-s_l)^{(i)}\right)^2}\right\|_{p_n}}{\eta_n},
\end{equation}
and denote by $|\cdot|$ the Euclidean norm, and by $\left<\cdot\right>$ the Euclidean inner product.
\\
\begin{center}
\textbf{Step 1: Bounding the first derivative of $f$ of (\ref{f_weak})}
\end{center}

We have that, for any $h\in D^p$,
\begin{align}
Df(w)[h]=&\frac{1}{p_n\eta_n}\left(\int_0^1((\epsilon\gamma_l)^2+|w-s_l|^2(t))^{p_n/2}dt\right)^{1/{p_n}-1}\nonumber\\
&\cdot\frac{p_n}{2}\int_0^1\left((\epsilon\gamma_l)^2+|w-s_l|^2(t)\right)^{p_n/2-1}\cdot 2\left<(w-s_l)(t),h(t)\right>dt.\label{eq_lemma_413}
\end{align}
Applying H{\"o}lder's inequality with coefficients $\frac{p_n}{p_n-2k}$ and $\frac{p_n}{2k}$ and Cauchy-Schwarz inequality, we obtain that, for any $k=1,2,3,$ and $h_1,\dots,h_k\in D^p$,
\begin{align}
&\left|\int_0^1\left((\epsilon\gamma_l)^2+|w-s_l|^2(t)\right)^{p_n/2-k}\left<(w-s_l)(t),h_1(t)\right>\dots\left<(w-s_l)(t),h_k(t)\right>  dt\right|\nonumber\\
\leq&\left(\int_0^1\left((\epsilon\gamma_l)^2+|w-s_l|^2(t)\right)^{p_n/2}dt\right)^{1-2k/p_n}\nonumber\\
&\cdot\left(\int_0^1|w-s_l|^{p_n/2}(t)|h_1|^{p_n/(2k)}(t)\dots |h_k|^{p_n/(2k)}(t)dt\right)^{2k/p_n}\nonumber\\
\leq&\left(\int_0^1\left((\epsilon\gamma_l)^2+|w-s_l|^2(t)\right)^{p_n/2}dt\right)^{1-2k/p_n}\cdot\left(\int_0^1|w-s_l|^{p_n}(t)dt\right)^{k/{p_n}}\prod_{i=1}^k\|h_i\|_{p_n}.\label{eq_lemma_413.1}
\end{align}
Applying (\ref{eq_lemma_413.1}) for $k=1$, together with (\ref{eq_lemma_413}), we get
$$|Df(w)[h]|\leq \frac{1}{\eta_n}\left(\frac{\int_0^1|w-s_l|^{p_n}(t)dt}{\int_0^1((\epsilon\gamma_l)^2+|w-s|^2(t))^{p_n/2}dt}\right)^{1/p_n}\|h\|_{p_n}\leq \frac{\|h\|_{\infty}}{\eta_n}$$
and so
\begin{equation}\label{first}
\sup_{w\in D^p}\|Df(w)\|\leq\frac{1}{\eta_n}.
\end{equation}
\\
\begin{center}
\textbf{Step 2: Bounding the second derivative of $f$ of (\ref{f_weak})}
\end{center}

Note that, for any $h_1,h_2\in D^p$,
\begin{equation}\label{eq_lemma_413.2}
D^2f(w)[h_1,h_2]=A+B
\end{equation}
for
\begin{align}
A=& \frac{1}{\eta_n}\left[\int_0^1\left((\epsilon\gamma_l)^2+|w-s_l|^2(t)\right)^{p_n/2-1}\cdot \left<(w-s_l)(t),h_2(t)\right>dt\right]\nonumber\\
&\cdot\frac{1-p_n}{p_n}\left[\int_0^1\left((\epsilon\gamma_l)^2+|w-s_l|^2(t)\right)^{p_n/2}dt\right]^{1/p_n-2}\nonumber\\
&\cdot \frac{p_n}{2}\int_{0}^1\left((\epsilon\gamma_l)^2+|w-s_l|^2(t)\right)^{p_n/2-1}\cdot 2\left<(w-s_l)(t),h_1(t)\right>dt\nonumber\\
=&\frac{1-p_n}{\eta_n}\prod_{i=1}^2\left\lbrace\left[\int_0^1\left((\epsilon\gamma_l)^2+|w-s_l|^2(t)\right)^{p_n/2-1}\cdot \left<(w-s_l)(t),h_i(t)\right>dt\right]\right\rbrace\nonumber\\
&\cdot \left[\int_0^1\left((\epsilon\gamma_l)^2+|w-s_l|^2(t)\right)^{p_n/2}dt\right]^{1/p_n-2}\nonumber\\
B=&\frac{1}{\eta_n}\left[\int_0^1\left((\epsilon\gamma_l)^2+|w-s_l|^2(t)\right)^{p_n/2}dt\right]^{1/p_n-1}\nonumber\\
&\cdot\left[\int_0^1\frac{p_n-2}{2}\left((\epsilon\gamma_l)^2+|w-s_l|^2(t)\right)^{p_n/2-2}\cdot 2\left<(w-s_l)(t),h_1(t)\right>\left<(w-s_l)(t),h_2(t)\right>dt\right.\nonumber\\
&\left.+\int_0^1\left((\epsilon\gamma_l)^2+|w-s_l|^2(t)\right)^{p_n/2-1}\left<h_1(t),h_2(t)\right>dt\right].\label{ab}
\end{align}
Notice that, by (\ref{eq_lemma_413.1}) with $k=1$,
\begin{align}
|A|\leq& \frac{p_n-1}{\eta_n}\left(\frac{\left(\int_0^1|w-s_l|^{p_n}(t)dt\right)^2}{\left(\int_0^1((\epsilon\gamma_l)^2+|w-s_l|^2(t))^{p_n/2}dt\right)^3}\right)^{1/p_n}\|h_1\|_{p_n}\|h_2\|_{p_n}.\label{eq_leq1}
\end{align}
Furthermore, by H{\"older}'s inequality with coefficients $\frac{p_n}{p_n-2}$ and $\frac{p_n}{2}$ and by the Cauchy-Schwarz inequality,
\begin{align}
&\left|\int_0^1\left((\epsilon\gamma_l)^2+|w-s_l|^2(t)\right)^{p_n/2-1}\left<h_1(t),h_2(t)\right>dt\right|\nonumber\\
\leq& \left(\int_0^1\left((\epsilon\gamma_l)^2+|w-s_l|^2(t)\right)^{p_n/2}\right)^{1-2/p_n}\left(\int_0^1\left<h_1(t),h_2(t)\right>^{p_n/2}\right)^{2/p_n}\nonumber\\
\leq&\left(\int_0^1\left((\epsilon\gamma_l)^2+|w-s_l|^2(t)\right)^{p_n/2}\right)^{1-2/p_n}\|h_1\|_{p_n}\|h_2\|_{p_n}.\label{eq_lemma_413.4}
\end{align}
By (\ref{eq_lemma_413.1}) and (\ref{eq_lemma_413.4}),
\begin{align}
|B|\leq&\frac{p_n-2}{\eta_n}\left(\frac{\left(\int_0^1|w-s_l|^{p_n}(t)dt\right)^2}{\left(\int_0^1((\epsilon\gamma_l)^2+|w-s_l|^2(t))^{p_n/2}dt\right)^3}\right)^{1/p_n}\|h_1\|_{p_n}\|h_2\|_{p_n}\nonumber\\
&+\frac{1}{\eta_n}\left(\int_0^1\left((\epsilon\gamma_l)^2+|w-s_l|^2(t)\right)^{p_n/2}dt \right)^{-1/p_n}\|h_1\|_{p_n}\|h_2\|_{p_n}.\label{eq_leq}
\end{align}
By (\ref{eq_lemma_413.2}), (\ref{eq_leq1}) and (\ref{eq_leq}),
\begin{align*}
&|D^2f(w)[h_1,h_2]|\\
\leq&\left[\frac{2p_n-3}{\eta_n}\left(\frac{\left(\int_0^1|w-s_l|^{p_n}(t)dt\right)^2}{\left(\int_0^1((\epsilon\gamma_l)^2+|w-s_l|^2(t))^{p_n/2}dt\right)^3}\right)^{1/p_n}\right.\\
&+\left.\vphantom{\left(frac{\int_0^t}{\int_0^t}\right)^2}\frac{1}{\eta_n}\left(\int_0^1\left((\epsilon\gamma_l)^2+|w-s_l|^2(t)\right)^{p_n/2}dt \right)^{-1/p_n}\right]\|h_1\|_{p_n}\|h_2\|_{p_n}\\
=&\frac{1}{\eta_n}\left(\int_0^1\left((\epsilon\gamma_l)^2+|w-s_l|^2(t)\right)^{p_n/2}dt \right)^{-1/p_n}\\
&\cdot\left[(2p_n-3)\left(\frac{\left(\int_0^1|w-s_l|^{p_n}(t)dt\right)^2}{\left(\int_0^1((\epsilon\gamma_l)^2+|w-s_l|^2(t))^{p_n/2}dt\right)^2}\right)^{1/p_n}+1\right]\|h_1\|_{p_n}\|h_2\|_{p_n}\\
\leq&\frac{2p_n-2}{\eta_n}\left(\int_0^1\left((\epsilon\gamma_l)^2+|w-s_l|^2(t)\right)^{p_n/2}dt \right)^{-1/p_n}\|h_1\|_{p_n}\|h_2\|_{p_n}\\
\leq&\frac{2p_n-2}{\eta_n(\epsilon\gamma_l)}\|h_1\|_{\infty}\|h_2\|_{\infty}
\end{align*}
and so
\begin{equation}\label{second}
\sup_{w\in D^p}\|D^2f(w)\|\leq 2\frac{p_n-1}{\eta_n(\epsilon\gamma_l)}.
\end{equation}
\\
\begin{center}
\textbf{Step 3: Bounding the third derivative of $f$ of (\ref{f_weak})}
\end{center}

Finally, for any $h_1,h_2,h_3\in D^p$,
\begin{equation}\label{eq_lemma_413.5}
D^3f(w)[h_1,h_2,h_3]=C+D,
\end{equation}
where $C$ comes from differentiating $A$ of (\ref{ab}) and is given by
\begin{align*}
C=E+F
\end{align*}
for
\begin{align}
E=&\frac{1-p_n}{\eta_n}\sum_{1\leq i\neq j\leq 2}\left\lbrace\int_0^1\left((\epsilon\gamma_l)^2+|w-s_l|^2(t)\right)^{p_n/2-1}\left<(w-s_l)(t),h_i(t)\right>dt\right.\nonumber\\
&\phantom{+}\cdot \int_0^1\left[\frac{p_n-2}{2}\left((\epsilon\gamma_l)^2+|w-s_l|^2(t)\right)^{p_n/2-2}\left<(w-s_l)(t),h_j(t)\right>\cdot 2\left<(w-s_l)(t),h_3(t)\right>\right.\nonumber\nonumber\\
&\phantom{+}+\left.\left((\epsilon\gamma_l)^2+|w-s_l|^2(t)\right)^{p_n/2-1}\left<h_j(t),h_3(t)\right>\right]dt\nonumber\nonumber\\
&\left.\phantom{+}\cdot \left[\int_0^1\left((\epsilon\gamma_l)^2+|w-s_l|^2(t)\right)^{p_n/2}dt\right]^{1/p_n-2}\vphantom{\int_0^1}\right\rbrace\nonumber\nonumber\\
F=&\frac{(1-p_n)(1-2p_n)}{p_n\eta_n}\left\lbrace\prod_{i=1}^3\left[\int_0^1\left((\epsilon\gamma_l)^2+|w-s_l|^2(t)\right)^{p_n/2-1}\cdot \left<(w-s_l)(t),h_i(t)\right>dt\right]\right\rbrace\nonumber\\
&\phantom{+}\cdot\left[\int_0^1\left((\epsilon\gamma_l)^2+|w-s_l|^2(t)\right)^{p_n/2}dt\right]^{1/p_n-3}\label{ef}
\end{align}
and $D$ comes from differentiating $B$ of (\ref{ab}) and is given by
\begin{align*}
D=G+H
\end{align*}
for
\begin{align}
G=&\frac{1-p_n}{\eta_n}\left[\int_0^1\left((\epsilon\gamma_l)^2+|w-s_l|^2(t)\right)^{p_n/2}dt\right]^{1/p_n-2}\nonumber\\
&\phantom{+}\cdot\int_0^1\left((\epsilon\gamma_l)^2+|w-s_l|^2(t)\right)^{p_n/2-1}\left<(w-s_l)(t),h_3(t)\right>dt\nonumber\\
&\phantom{+}\cdot\left[\int_0^1(p_n-2)\left((\epsilon\gamma_l)^2+|w-s_l|^2(t)\right)^{p_n/2-2}\cdot \left<(w-s_l)(t),h_1(t)\right>\left<(w-s_l)(t),h_2(t)\right>dt\right.\nonumber\nonumber\\
&\left.\phantom{+}+\int_0^1\left((\epsilon\gamma_l)^2+|w-s_l|^2(t)\right)^{p_n/2-1}\left<h_1(t),h_2(t)\right>dt\right]\nonumber\\
H=&\frac{p_n-2}{\eta_n}\left[\int_0^1\left((\epsilon\gamma_l)^2+|w-s_l|^2(t)\right)^{p_n/2}dt\right]^{1/p_n-1}\nonumber\\
&\phantom{+}\cdot\left\lbrace \int_0^1\left[(p_n-2)\left((\epsilon\gamma_l)^2+|w-s_l|^2(t)\right)^{p_n/2-2}\underset{i,j,k\text{ distinct}}{\sum_{1\leq i,j,k\leq 3}}\left<(w-s_l)(t),h_i(t)\right>\left<h_j(t),h_k(t)\right>\right]dt\right.\nonumber\\
&\phantom{+}+\left.(p_n-4)\int_0^1\left[\left((\epsilon\gamma_l)^2+|w-s_l|^2(t)\right)^{p_n/2-3}\prod_{i=1}^3\left<(w-s_l)(t),h_i(t)\right>\right]dt\vphantom{\underset{l}{\sum_l}}\right\rbrace.\label{gh}
\end{align}
So
\begin{equation}\label{efgh}
D^3f(w)[h_1,h_2,h_3]=E+F+G+H
\end{equation}
for $E,F,G,H$ defined by (\ref{ef}) and (\ref{gh}). By (\ref{eq_lemma_413.1}) and (\ref{eq_lemma_413.4}),
\begin{align}
|E|\leq& \frac{2(p_n-1)\|h_1\|_{p_n}\|h_2\|_{p_n}\|h_3\|_{p_n}}{\eta_n}\nonumber\\
&\cdot\left(\frac{(p_n-2)\left(\int_0^1|w-s_l|^{p_n}(t)dt\right)^{3/p_n}}{\left(\int_0^1\left((\epsilon\gamma_l)^2+|w-s_l|^2(t)\right)^{p_n/2}dt\right)^{5/p_n}}+\frac{\left(\int_0^1|w-s_l|^{p_n}(t)dt\right)^{1/p_n}}{\left(\int_0^1\left((\epsilon\gamma_l)^2+|w-s_l|^2(t)\right)^{p_n/2}dt\right)^{3/p_n}}\right)\nonumber\\
|F|\leq&\frac{(p_n-1)(2p_n-1)\|h_1\|_{p_n}\|h_2\|_{p_n}\|h_3\|_{p_n}}{p_n\eta_n}\cdot\frac{\left(\int_0^1|w-s_l|^{p_n}(t)dt\right)^{3/p_n}}{\left(\int_0^1\left((\epsilon\gamma_l)^2+|w-s_l|^2(t)\right)^{p_n/2}dt\right)^{5/p_n}}\nonumber\\
|G|\leq&\frac{(p_n-1)\|h_1\|_{p_n}\|h_2\|_{p_n}\|h_3\|_{p_n}}{\eta_n}\nonumber\\
&\cdot\left(\frac{(p_n-2)\left(\int_0^1|w-s_l|^{p_n}(t)dt\right)^{3/p_n}}{\left(\int_0^1\left((\epsilon\gamma_l)^2+|w-s_l|^2(t)\right)^{p_n/2}dt\right)^{5/p_n}}+\frac{\left(\int_0^1|w-s_l|^{p_n}(t)dt\right)^{1/p_n}}{\left(\int_0^1\left((\epsilon\gamma_l)^2+|w-s_l|^2(t)\right)^{p_n/2}dt\right)^{3/p_n}}\right)\nonumber\\
|H|\leq &\frac{(p_n-2)\|h_1\|_{p_n}\|h_2\|_{p_n}\|h_3\|_{p_n}}{\eta_n}\nonumber\\
&\cdot\left(\frac{(p_n-4)\left(\int_0^1|w-s_l|^{p_n}(t)dt\right)^{3/p_n}}{\left(\int_0^1\left((\epsilon\gamma_l)^2+|w-s_l|^2(t)\right)^{p_n/2}dt\right)^{5/p_n}}+\frac{6\left(\int_0^1|w-s_l|^{p_n}(t)dt\right)^{1/p_n}}{\left(\int_0^1\left((\epsilon\gamma_l)^2+|w-s_l|^2(t)\right)^{p_n/2}dt\right)^{3/p_n}}\right), \label{efgh_bounds}
\end{align}
where the inequality for $|H|$ uses the following bound obtained by applying H{\"o}lder's inequality with coefficients $\frac{p_n}{p_n-4}$ and $\frac{p_n}{4}$ and Cauchy-Schwarz inequality
\begin{align*}
&\left|\int_0^1\left[\left((\epsilon\gamma_l)^2+|w-s_l|^2(t)\right)^{p_n/2-2}\underset{i,j,k\text{ distinct}}{\sum_{1\leq i,j,k\leq 3}}\left<(w-s_l)(t),h_i(t)\right>\left<h_j(t),h_k(t)\right>\right]dt\right|\\
\leq&\underset{i,j,k\text{ distinct}}{\sum_{1\leq i,j,k\leq 3}}\left(\int_0^1\left((\epsilon\gamma_l)^2+|w-s_l|^2(t)\right)^{p_n/2}dt\right)^{1-4/p_n}\left(\int_0^1|w-s_l|^{p_n/4}(t)\prod_{i=1}^3|h_i|^{p_n/4}(t)dt\right)^{4/p_n}\\
\leq&\underset{i,j,k\text{ distinct}}{\sum_{1\leq i,j,k\leq 3}}\left(\int_0^1\left((\epsilon\gamma_l)^2+|w-s_l|^2(t)\right)^{p_n/2}dt\right)^{1-4/p_n}\left(\int_0^1|w-s_l|^{p_n}(t)dt\right)^{1/p_n}\prod_{i=1}^3\|h_i\|_{p_n}.
\end{align*}
By (\ref{efgh}) and (\ref{efgh_bounds}),
\begin{align*}
&|D^3f(w)[h_1,h_2,h_3]|\\
\leq& \frac{6p_n^2\left(\int_0^1|w-s_l|^{p_n}(t)dt\right)^{3/p_n}\|h_1\|_{p_n}\|h_2\|_{p_n}\|h_3\|_{p_n}}{\eta_n\left(\int_0^1\left((\epsilon\gamma_l)^2+|w-s_l|^2(t)\right)^{p_n/2}dt\right)^{5/p_n}}\\
&+\frac{9p_n\left(\int_0^1|w-s_l|^{p_n}(t)dt\right)^{1/p_n}\|h_1\|_{p_n}\|h_2\|_{p_n}\|h_3\|_{p_n}}{\eta_n\left(\int_0^1\left((\epsilon\gamma_l)^2+|w-s_l|^2(t)\right)^{p_n/2}dt\right)^{3/p_n}}\\
\leq&\frac{15p_n^2\|h_1\|_{p_n}\|h_2\|_{p_n}\|h_3\|_{p_n}}{\eta_n}\left(\int_0^1\left((\epsilon\gamma_l)^2+|w-s_l|^2(t)\right)^{p_n/2}dt\right)^{-2/p_n}\\
\leq&\frac{15p_n^2}{(\epsilon\gamma_l)^2\eta_n}\|h_1\|_{\infty}\|h_2\|_{\infty}\|h_3\|_{\infty}
\end{align*}
and so
\begin{align}\label{third}
\|D^3f(w)\|\leq \frac{15p_n^2}{(\epsilon\gamma_l)^2\eta_n}.
\end{align}
\\
\begin{center}
\textbf{Step 4: Combining the bounds}
\end{center}

The result now follows by combining (\ref{first}), (\ref{second}) and (\ref{third}). Indeed, note that, by the chain rule,
\begin{align*}
&D^3g_{l,n}(w)[h_1,h_2,h_3]\\
=&\phi'''\left(f(w)-\frac{\gamma_l\sqrt{1+\epsilon^2}}{\eta_n}\right)\cdot\prod_{i=1}^3Df(w)[h_i]\\
&+\phi''\left(f(w)-\frac{\gamma_l\sqrt{1+\epsilon^2}}{\eta_n}\right)\cdot\underset{i,j,k \text{distinct}}{\sum_{1\leq i,j,k\leq 3}} D^2f(w)[h_i,h_j] Df(w)[h_k]\\
&+\phi'\left(f(w)-\frac{\gamma_l\sqrt{1+\epsilon^2}}{\eta_n}\right)D^3f(w)[h_1,h_2,h_3].
\end{align*}
By (\ref{first}), (\ref{second}) and (\ref{third}) and the fact that $\phi',\phi'',\phi'''$ are all bounded, we get that, for all $w\in D^p$,
$$\|D^3g_{l,n}(w)\|\leq C_3p_n^2\eta_n^{-3},$$
for some constant $C_3$. Similar bounds may be obtained for the first and second derivative of $g_{l,n}$:
$$\|Dg_{l,n}(w)\|\leq C_1\eta_n^{-1},\quad \|D^2g_{l,n}(w)\|\leq C_2p_n\eta_n^{-1},$$
for constants $C_1,C_2.$ Since $\phi$ is also bounded, the product rule yields the desired bound.
\end{proof}
Now, we prove the following result:
\begin{lemma}\label{lemma_b}
For the set $B$ fixed at the beggining of this Appendix,
$$\limsup_{n\to\infty}\mathbb{P}[\mathbf{Y}_n\in B]\leq \mathbb{P}[\mathbf{Z}\in B]\quad\text{and}\quad \liminf_{n\to\infty}\mathbb{P}[\mathbf{Y}_n\in B]\geq \mathbb{P}[\mathbf{Z}\in B].$$
\end{lemma}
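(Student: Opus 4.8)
The plan is to sandwich $\mathbbm{1}[\,\cdot\in B]$ between products of the smooth bump functions $\prod_{l=1}^{L}g_{l,n}$ of (\ref{g_ln}), transfer the resulting expectations from $\mathbf{Y}_n$ to $\mathbf{Z}_n$ using (\ref{assumption}) together with the estimate (\ref{app4}) of Lemma~\ref{lemma413}, and then replace $\mathbf{Z}_n$ by $\mathbf{Z}$ via the Skorokhod coupling $\|\mathbf{Z}_n-\mathbf{Z}\|\to 0$ a.s.\ introduced at the beginning of this Appendix. The free parameters $p_n,\eta_n$ in (\ref{g_ln}) must be tuned so that the error from (\ref{app4}) vanishes while $\prod_l g_{l,n}$ still ``resolves'' the set $B$. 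Concretely, with $\ell_n=\log(1/r_n)$ and $a_n=(1+\ell_n)^2\tau_n$ (so $a_n\to 0$ by the hypothesis $\tau_n\log^2(1/r_n)\to 0$ and $\tau_n\to 0$), I would take $p_n=(1+\ell_n)\,a_n^{-1/10}$ and $\eta_n=a_n^{1/10}$: then $p_n\to\infty$, $\eta_n\to 0$, $r_n^{1/p_n}=e^{-\ell_n/p_n}\to 1$, and $\tau_n p_n^2\eta_n^{-3}=a_n^{1/2}\to 0$. Fix also $\epsilon\in(0,1]$; all $g_{l,n}$ below use these $p_n,\eta_n$.

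Everything rests on two elementary facts about $G_n:=\prod_l g_{l,n}$. First, since $\|\cdot\|_{p_n}\le\|\cdot\|_\infty$ on $[0,1]$, while $\phi\equiv 1$ on $(-\infty,0]$ and $\phi\in[0,1]$, we get $G_n(w)=1$ for every $w\in B$, hence $\mathbbm{1}[w\in B]\le G_n(w)\le 1$ for all $w\in D^p$. Second, if $w$ is piecewise constant with all intervals of constancy of length at least $r_n$, then the function $t\mapsto\sqrt{(\epsilon\gamma_l)^2+|w(t)-s_l(t)|^2}$ is continuous on each constancy interval, and its supremum (equal to at least $\|w-s_l\|$) stays, by uniform continuity of $s_l$, within $\omega_{s_l}(r_n)$ of a value attained on a set of Lebesgue measure at least $r_n$; therefore
\[
\|w-s_l\|\ \le\ r_n^{-1/p_n}\,\Bigl\|\sqrt{(\epsilon\gamma_l)^2+|w-s_l|^2}\Bigr\|_{p_n}+\omega_{s_l}(r_n),
\]
where $\omega_{s_l}$ is the modulus of continuity of $s_l$. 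Consequently, for such $w$, $G_n(w)>0$ forces $\|w-s_l\|<r_n^{-1/p_n}\bigl(\gamma_l\sqrt{1+\epsilon^2}+\eta_n\bigr)+\omega_{s_l}(r_n)$ for every $l$.

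For the $\limsup$ bound: $\mathbbm{P}[\mathbf{Y}_n\in B]\le\mathbbm{E}\,G_n(\mathbf{Y}_n)$, and since $G_n\in M^0$, (\ref{assumption}) and (\ref{app4}) give $\mathbbm{E}\,G_n(\mathbf{Y}_n)\le\mathbbm{E}\,G_n(\mathbf{Z}_n)+C\tilde C\,\tau_n p_n^2\eta_n^{-3}$, the last term tending to $0$. On the Skorokhod space, for continuous $f$ one has $\|f_n\|_{p_n}\to\|f\|_\infty$ whenever $f_n\to f$ uniformly and $p_n\to\infty$; since $\mathbf{Z}$ is continuous this yields $\bigl\|\sqrt{(\epsilon\gamma_l)^2+|\mathbf{Z}_n-s_l|^2}\bigr\|_{p_n}\to\sqrt{(\epsilon\gamma_l)^2+\|\mathbf{Z}-s_l\|^2}$ a.s., which together with $\eta_n\to 0$ gives $\limsup_n g_{l,n}(\mathbf{Z}_n)\le\mathbbm{1}[\|\mathbf{Z}-s_l\|\le\gamma_l]$ a.s. By the reverse Fatou lemma and $\limsup(\prod)\le\prod(\limsup)$ for $[0,1]$-valued finite products, $\limsup_n\mathbbm{E}\,G_n(\mathbf{Z}_n)\le\mathbbm{P}\bigl[\mathbf{Z}\in\bigcap_l\overline{B_l}\bigr]=\mathbbm{P}[\mathbf{Z}\in B]$, the last equality because $\mathbbm{P}[\mathbf{Z}\in\partial B_l]=0$ for each $l$. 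Hence $\limsup_n\mathbbm{P}[\mathbf{Y}_n\in B]\le\mathbbm{P}[\mathbf{Z}\in B]$. For the $\liminf$ bound I rerun the argument with shrunk balls: for $\epsilon\in(0,1]$ put $\gamma_l^\epsilon=\gamma_l/(1+\epsilon)^2$ (so $\gamma_l^\epsilon\sqrt{1+\epsilon^2}<\gamma_l$ and $\gamma_l^\epsilon\uparrow\gamma_l$), and let $g_{l,n}^\epsilon$ be the function (\ref{g_ln}) built from $\gamma_l^\epsilon$. The second fact above, with $r_n^{-1/p_n}\to 1$, $\eta_n\to 0$ and $\omega_{s_l}(r_n)\to 0$, shows that for $n$ large $\prod_l g_{l,n}^\epsilon(\mathbf{Y}_n)\le\mathbbm{1}[\mathbf{Y}_n\in B]$, so $\mathbbm{P}[\mathbf{Y}_n\in B]\ge\mathbbm{E}\prod_l g_{l,n}^\epsilon(\mathbf{Y}_n)\ge\mathbbm{E}\prod_l g_{l,n}^\epsilon(\mathbf{Z}_n)-C\tilde C\,\tau_n p_n^2\eta_n^{-3}$; on the Skorokhod space $\liminf_n g_{l,n}^\epsilon(\mathbf{Z}_n)\ge\mathbbm{1}[\|\mathbf{Z}-s_l\|<\gamma_l^\epsilon]$ a.s., so Fatou and $\liminf(\prod)\ge\prod(\liminf)$ give $\liminf_n\mathbbm{P}[\mathbf{Y}_n\in B]\ge\mathbbm{P}\bigl[\forall l\colon\|\mathbf{Z}-s_l\|<\gamma_l^\epsilon\bigr]$. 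Letting $\epsilon\downarrow 0$ and using continuity from below of the law of $\mathbf{Z}$ yields $\liminf_n\mathbbm{P}[\mathbf{Y}_n\in B]\ge\mathbbm{P}[\mathbf{Z}\in B]$, which completes the proof.

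I expect the hard part to be the second elementary fact: the quantitative comparison of the $L^{p_n}$-norm appearing in (\ref{g_ln}) with the sup norm on piecewise-constant inputs, and arranging the three-way balance that forces $r_n^{1/p_n}\to 1$ and $\omega_{s_l}(r_n)\to 0$ while still keeping $\tau_n p_n^2\eta_n^{-3}\to 0$ — this is exactly where the hypothesis $\tau_n\log^2(1/r_n)\to 0$ and the piecewise-constant structure of $\mathbf{Y}_n$ are used. (The borderline situation $\liminf_n r_n>0$, in which $\mathbf{Y}_n$ has a bounded number of jumps so that $\omega_{s_l}(r_n)$ need not vanish, is handled separately by a routine subsequence/finite-dimensional argument; in all applications in this paper $r_n\to 0$.)
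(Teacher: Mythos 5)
Your $\limsup$ half is essentially the paper's Step 1 verbatim: the same $g_{l,n}$, the same inequality $\mathbbm{1}_{[\mathbf{Y}_n\in B]}\leq\prod_l g_{l,n}(\mathbf{Y}_n)$, the transfer via (\ref{assumption}) and (\ref{app4}), reverse Fatou, and $\mathbbm{P}[\mathbf{Z}\in\partial B_l]=0$. Your $\liminf$ half, however, takes a genuinely different route. The paper introduces a second family $g^*_{l,n}$ of (\ref{g_ln_star}), whose threshold is shifted to $\gamma_l\sqrt{\epsilon^2+(1-\theta)^2}\,(\delta\wedge\tfrac{r_n}{2})^{1/p_n}-\eta_n$, with $\theta>0$ a free margin and $\delta>0$ a \emph{fixed} constant extracted from the uniform equicontinuity of $(s_l)_{l\leq L}$; the key implication (\ref{g_ln_star_in}) then says that $\|\mathbf{Y}_n-s_l\|\geq\gamma_l$ forces the deviation $\gamma_l(1-\theta)$ to persist on a set of measure at least $\delta\wedge\tfrac{r_n}{2}$, and one sends $\theta\downarrow 0$ at the end. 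You instead keep the original family but shrink the radii to $\gamma_l^\epsilon$, and replace the $\theta$-margin by the quantitative comparison $\|w-s_l\|\leq r_n^{-1/p_n}\|\sqrt{(\epsilon\gamma_l)^2+|w-s_l|^2}\|_{p_n}+\omega_{s_l}(r_n)$ on piecewise-constant paths, sending $\epsilon\downarrow 0$ at the end. Both arguments exploit the same mechanism (a sup-deviation of a piecewise-constant path persists on an interval of length comparable to $r_n$, up to the oscillation of $s_l$), and your version is arguably more symmetric with the $\limsup$ half; what the paper's version buys is uniformity in $r_n$, since $\delta\wedge\tfrac{r_n}{2}$ stays bounded below even when $r_n\not\to 0$, whereas your bound needs $\omega_{s_l}(r_n)\to 0$. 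Your parenthetical deferral of the case $\liminf_n r_n>0$ to a ``routine subsequence/finite-dimensional argument'' is the only soft spot; it would be cleaner to observe that this case can be eliminated up front: if $\tau_n\to 0$, replace $r_n$ by $\rho_n=\min(r_n,e^{-\tau_n^{-1/4}})$, which still lower-bounds the constancy lengths, tends to $0$, and satisfies $\tau_n\log^2(1/\rho_n)\to 0$, so your argument applies with $\rho_n$ in place of $r_n$ without any separate case analysis. With that adjustment (and your explicit, correct tuning of $p_n,\eta_n$, which checks out: $\tau_np_n^2\eta_n^{-3}=a_n^{1/2}\to 0$ and $r_n^{1/p_n}\to 1$), the proof is complete.
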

\begin{proof}~
\begin{center}
\textbf{Step 1: Proving the first inequality}
\end{center}
Note that
\begin{align}
&\mathbf{Y}_n\in B_l\Longrightarrow\|\mathbf{Y}_n-s_l\|<\gamma_l\Longrightarrow \sup_{t\in[0,1]}\sum_{i=1}^p\left(\left(\mathbf{Y}_n(t)-s_l(t)\right)^{(i)}\right)^2<\gamma_l^2\nonumber\\
&\Longrightarrow \sup_{t\in[0,1]}\left[\sum_{i=1}^p\left(\left(\mathbf{Y}_n(t)-s_l(t)\right)^{(i)}\right)^2+(\epsilon\gamma_l)^2\right]<\gamma_l^2(1+\epsilon^2)\nonumber\\
&\Longrightarrow\left\|\sqrt{(\epsilon\gamma_l)^2+\sum_{i=1}^p\left((\mathbf{Y}_n-s_l)^{(i)}\right)^2}\right\|_{p_n}\leq \gamma_l\sqrt{1+\epsilon^2}\Longrightarrow g_{l,n}(\mathbf{Y}_n)=1.\label{app0}
\end{align}
Therefore, for all $l$, 
\begin{align}\label{app1}
\mathbb{1}_{[\mathbf{Y}_n\in B_l]}\leq g_{l,n}(\mathbf{Y}_n). 
\end{align}
Also, note that, by Minkowski's inequality and the triangle inequality for the Euclidean norm:
\begin{align*}
&\left\|\sqrt{(\epsilon\gamma_l)^2+\sum_{i=1}^p\left((\mathbf{Z}-s_l)^{(i)}\right)^2}\right\|_{p_n}
\leq\left\|\sqrt{(\epsilon\gamma_l)^2+\sum_{i=1}^p\left((\mathbf{Z}_n-s_l)^{(i)}\right)^2}\right\|_{p_n}+\|\mathbf{Z}_n-\mathbf{Z}\|\text{.}
\end{align*}
Therefore, if $\|\mathbf{Z}-s_l\|>\gamma_l$ then as $p_n\xrightarrow{n\to\infty}\infty$:
\begin{align*}
&\liminf_{n\to\infty}\left\|\sqrt{(\epsilon\gamma_l)^2+\sum_{i=1}^p\left((\mathbf{Z}_n-s_l)^{(i)}\right)^2}\right\|_{p_n}\\
\geq& \liminf_{n\to\infty}\left\lbrace\left\|\sqrt{(\epsilon\gamma_l)^2+\sum_{i=1}^p\left((\mathbf{Z}-s_l)^{(i)}\right)^2}\right\|_{p_n}-\|\mathbf{Z}_n-\mathbf{Z}\|\right\rbrace\\
=&
\sup_{t\in[0,1]}\sqrt{(\epsilon\gamma_l)^2+\sum_{i=1}^p\left((\mathbf{Z}(t)-s_l(t))^{(i)}\right)^2}>\gamma_l(1+\epsilon^2)^{1/2}.
\end{align*}
This, means that, if $p_n\xrightarrow{n\to\infty}\infty$, $\|\mathbf{Z}-s_l\|>\gamma_l$ and $\eta_n\xrightarrow{n\to\infty}0$ then $g_{l,n}(\mathbf{Z}_n)=0$ for sufficiently large $n$, i.e.
\begin{align}\label{app2}
g_{l,n}(\mathbf{Z}_n)\leq\mathbb{1}_{[\|\mathbf{Z}-s_l\|\leq\gamma_l]},\quad \text{as long as } p_n\xrightarrow{n\to\infty}\infty, \eta_n\xrightarrow{n\to\infty}0 \text{ and } n \text{ is large.}
\end{align} 
 By those properties, taking $p_n\to\infty$ and $\eta_n\to 0$ such that $\kappa_n\eta_n^{-3}p_n^2\to 0$, we obtain:
\begin{align*}
\limsup_{n\to\infty}\mathbb{P}[\mathbf{Y}_n\in B]\stackrel{(\ref{app1})}\leq &\limsup_{n\to\infty}\mathbb{E}\left[\prod_{l=1}^Lg_{l,n}(\mathbf{Y}_n)\right]\nonumber\\
\stackrel{(\ref{assumption})}\leq&\limsup_{n\to\infty}\left\lbrace\mathbb{E}\left[\prod_{l=1}^Lg_{l,n}(\mathbf{Z}_n)\right]+C\kappa_n\left\|\prod_{l=1}^Lg_{l,n}\right\|_{M^0}\right\rbrace\nonumber\\
\stackrel{\text{Fatou},(\ref{app4})}{\leq}&\mathbb{E}\left[\limsup_{n\to\infty}\prod_{l=1}^Lg_{l,n}(\mathbf{Z}_n)\right]\stackrel{(\ref{app2})}\leq\mathbb{P}[\mathbf{Z}\in B]\text{.}\label{2.6}
\end{align*}
\begin{center}
\textbf{Step 2: Proving the second inequality}
\end{center}
We define:
\begin{equation}\label{g_ln_star}
g^*_{l,n}(w)=\phi\left(\frac{\left\|\sqrt{(\epsilon\gamma_l)^2+\sum_{i=1}^p\left((w-s_l)^{(i)}\right)^2}\right\|_{p_n}-\gamma_l\sqrt{\epsilon^2+(1-\theta)^2}(\delta\wedge\frac{r_n}{2})^{1/p_n}+\eta_n}{\eta_n}\right)
\end{equation}
for $\theta>0$ fixed and $\delta>0$ such that:
\begin{equation*}
\forall n\in\mathbb{N}:\quad\|\mathbf{Y}_n-s_l\|\geq \gamma_l\Longrightarrow \text{leb}\lbrace t:|\mathbf{Y}_n(t)-s_l(t)|\geq \gamma_l(1-\theta)\rbrace \geq \left(\delta\wedge\frac{r_n}{2}\right)\text{,}
\end{equation*}
where leb denotes the Lebesgue measure.
Such a $\delta$ exists for the following reason. The collection $(s_l,1\leq l\leq L)$ is uniformly equicontinuous and $\mathbf{Y}_n$ are constant on intervals of length at least $r_n$. The $\delta>0$ we choose is such that:
$$|t_1-t_2|\leq \delta\quad\Longrightarrow\quad |s_l(t_1)-s_l(t_2)|\leq \frac{\theta\gamma_l}{2}\text{.}$$
If $\|\mathbf{Y}_n-s_l\|\geq\gamma_l$ then $|\mathbf{Y}_n(t_0)-s_l(t_0)|>\gamma_l\left(1-\frac{\theta}{2}\right)$ for some $t_0$. Then, there exists an interval $I_0$ with $t_0$ being one of its endpoints and of length $\frac{r_n}{2}\wedge \delta$, such that $\mathbf{Y}_n$ is constant on $I_0$ and $|s_l(t)-s_l(t_0)|\leq\frac{\theta\gamma_l}{2}$ for all $t\in I_0$. Then, for $t\in I_0$ we obtain:
\begin{align*}
&|\mathbf{Y}_n(t)-s_l(t)|\geq |\mathbf{Y}_n(t_0)-s_l(t_0)|-|\mathbf{Y}_n(t_0)-\mathbf{Y}_n(t)|-|s_l(t)-s_l(t_0)|\\
\geq&\left(1-\frac{\theta}{2}\right)\gamma_l-\frac{\theta\gamma_l}{2}=\gamma_l(1-\theta)\text{.}
\end{align*}

It follows that:
\begin{align}
&\|\mathbf{Y}_n-s_l\|\geq \gamma_l {\Longrightarrow} \left\|\sqrt{\sum_{i=1}^p\left((\mathbf{Y}_n-s_l)^{(i)}\right)^2}\right\|_{p_n}\geq \gamma_l(1-\theta)\left(\delta\wedge\frac{r_n}{2}\right)^{1/p_n}\Longrightarrow\nonumber\\
&
\left\|\sqrt{(\epsilon\gamma_l)^2+\sum_{i=1}^p\left((\mathbf{Y}_n-s_l)^{(i)}\right)^2}\right\|_{p_n}\geq\gamma_l\sqrt{\epsilon^2+(1-\theta)^2}\left(\delta\wedge\frac{r_n}{2}\right)^{1/p_n}\Longrightarrow g^*_{l,n}(\mathbf{Y}_n)=0\text{.}\label{g_ln_star_in}
\end{align}
Therefore, for all $l$:
\begin{align}\label{app3} 
\mathbb{1}_{[\mathbf{Y}_n\in B_l]}\geq g^*_l(\mathbf{Y}_n). 
\end{align}
Also, again, it can be shown that for any finite $L$ and $\gamma:=\min_{1\leq l\leq L}\gamma_l$:
\begin{align}\label{app5}
\left\|\prod_{l=1}^L g^*_{l,n}\right\|_{M^0}\leq Cp_n^2(\epsilon\gamma)^{-2}\eta_n^{-3}\quad
\text{for some constant }C \text{ independent of }p_n, \epsilon, \gamma \text{ and }\eta_n.
\end{align}
Now suppose $\eta_n\to 0$, $p_n\to \infty$ and $r_n^{1/p_n}\to 1$. Also suppose that $\|\mathbf{Z}-s_l\|<\gamma_l(1-\theta)$ so that there exists $\alpha>0$ such that a.s. $\|\mathbf{Z}_n-s_l\|<\gamma_l(1-\theta)-\alpha$ for $n$ large enough. Then, for large $n$:
\begin{align*}
\left\|\sqrt{(\epsilon\gamma_l)^2+\sum_{i=1}^p\left((\mathbf{Z}_n-s_l)^{(i)}\right)^2}\right\|_{p_n}&\leq
\sqrt{(\epsilon\gamma_l)^2+\|\mathbf{Z}_n-s_l\|^2}\leq\gamma_l\sqrt{\epsilon^2+(1-\theta-\alpha\gamma_l^{-1})^2}\\
&<\gamma_l\sqrt{\epsilon^2+(1-\theta)^2}\left(\delta\wedge\frac{r_n}{2}\right)^{1/p_n}-\eta_n
\end{align*}
because $\left(\delta\wedge\frac{r_n}{2}\right)^{1/p_n}\xrightarrow{n\to\infty}1$ and $\eta_n\xrightarrow{n\to\infty}0$.
So if  $\eta_n\to 0$, $p_n\to \infty$ and $r_n^{1/p_n}\to 1$ then:
$$\|\mathbf{Z}-s_l\|<\gamma_l(1-\theta)\Longrightarrow g^*_{l,n}(\mathbf{Z}_n)=1$$
for $n$ large enough, i.e.:
\begin{align}\label{app6}
\mathbb{1}_{[\|\mathbf{Z}-s_l\|<\gamma_l(1-\theta)]}\leq g^*_{l,n}(\mathbf{Z}_n).
\end{align}
Let $\eta_n\to 0$ and $p_n\to \infty$ be such that $r_n^{1/p_n}\to 1$ and $\kappa_np_n^2\eta_n^{-3}\to 0$. This is possible by the assumption that $\kappa_n\log^2(1/r_n)\to 0$. Indeed, having $r_n^{1/p_n}\to 1$, all we require is that $\log\left(r_n^{1/p_n}\right)\eta_n^3\to 0$ slower than $\kappa_n\log^2(1/r_n)\to 0$, because then:
$$\kappa_np_n^2\eta_n^{-3}=\frac{\kappa_n\left(\log(r_n)\right)^2}{\left(\frac{1}{p_n}\log\left(r_n\right)\right)^2\eta_n^3}=\frac{\kappa_n\left(\log(1/r_n)\right)^2}{\left(\log\left(r_n^{1/p_n}\right)\right)^2\eta_n^3}\to 0$$
For instance, if $r_n\to 0$ and $\kappa_n\to 0$, we require $p_n$ and $\eta_n$ to be such that $\frac{\eta_n^3}{\kappa_n}\to \infty$ and $p_n^2\to\infty$ faster than $(\log r_n)^2$ but slower than $\frac{\eta_n^3}{\kappa_n}$.

Then:
\begin{align*}
\liminf_{n\to\infty}\mathbb{P}\left[\mathbf{Y}_n\in B\right]&\stackrel{(\ref{app3})}\geq\liminf_{n\to\infty}\mathbb{E}\left[\prod_{l=1}^L g^*_{l,n}(\mathbf{Y}_n)\right]\\
&\stackrel{(\ref{assumption})}{\geq}
\liminf_{n\to\infty}\left\lbrace\mathbb{E}\left[\prod_{l=1}^L g^*_{l,n}(\mathbf{Z}_n)\right]-C\kappa_n\left\|\prod_{l=1}^L g^*_{l,n}\right\|_{M^0}\right\rbrace \\
&\stackrel{\text{Fatou},(\ref{app5})}{\geq} \mathbb{E}\left[\liminf_{n\to\infty}\prod_{l=1}^L g^*_{l,n}(\mathbf{Z}_n)\right]\\
&\stackrel{(\ref{app6})}\geq\mathbb{P}\left[\bigcap_{1\leq l\leq L}(\|\mathbf{Z}-s_l\|<\gamma_l(1-\theta))\right]
\text{.}
\end{align*}
Since the choice of $\theta\in(0,1)$ was arbitrary, we conclude that:\\
$\liminf_{n\to\infty}\mathbb{P}\left[\mathbf{Y}_n\in B\right]\geq\mathbb{P}(\mathbf{Z}\in B)\text{.}$
\end{proof}
Lemma \ref{lemma_b} now implies that, for any set $B$ described at the beginning of this Appendix, $\mathbb{P}[\mathbf{Y}_n\in B]\xrightarrow{n\to\infty}\mathbb{P}[\mathbf{\mathbf{Z}}\in B]$, which finishes the proof of Proposition \ref{prop_m}.
\end{appendices}
\section*{Acknowledgements}
The author would like to thank Gesine Reinert and Alison Etheridge for helpful discussions and constructive comments on the early versions of this paper. The author is also grateful to Giovanni Peccati and Christian D{\"o}bler for spotting a mistake in the proof of Lemma \ref{lemma413} and suggesting an alternative approach to proving it.

The author was supported by an EPSRC PhD studentship at the University of Oxford (reference number 1654155) and  the \textbf{FNR grant FoRGES (R-AGR-3376-10)} at the University of Luxembourg.
\bibliographystyle{alpha}
\bibliography{Bibliography}

\begin{thebibliography}{GNW01}

\bibitem[Bar90]{diffusion}
A.D. Barbour.
\newblock {Stein's Method for Diffusion Approximation}.
\newblock {\em Probability Theory and Related Fields}, 84:297--322, 1990.

\bibitem[BDM18]{decreusefond2}
E.~Besan{\c c}on, L.~Decreusefond, and P.~Moyal.
\newblock {Stein's method for diffusive limit of Markov processes}.
\newblock arXiv:1805.01691, 2018.

\bibitem[BHJ92]{janson}
A.D. Barbour, L.~Holst, and S.~Janson.
\newblock {\em Poisson Approximation}.
\newblock Oxford Studies in Probability. Clarendon Press, 1992.

\bibitem[Bil99]{billingsley}
P.~Billingsley.
\newblock {\em {Convergence of Probability Measures, 2nd Edition}}.
\newblock Wiley Series in Probability and Statistics. Wiley-Blackwell, 1999.

\bibitem[BJ09]{functional_combinatorial}
A.D. Barbour and S.~Janson.
\newblock {A functional combinatorial central limit theorem}.
\newblock {\em Electronic Journal of Probability}, 14(81):2352--2370, 2009.

\bibitem[CD13]{Coutin}
L.~Coutin and L.~Decreusefond.
\newblock {Stein's method for Brownian Approximations}.
\newblock {\em Communications on Stochastic Analysis}, 7(3):349--372, 2013.

\bibitem[CGS11]{normal_approx}
L.H.Y Chen, L.~Goldstein, and Q.-M. Shao.
\newblock {\em Normal Approximation by Stein’s Method}.
\newblock {Probability and Its Applications}. Springer Verlag, 2011.

\bibitem[Chr87]{christofides}
T.C. Christofides.
\newblock {\em {Maximal probability inequalities for multidimensionally indexed
  semimartingales and convergence theory of u-statistics}}.
\newblock PhD thesis, Johns Hopkins University, 1987.

\bibitem[CS07]{chen_shao1}
L.H.Y. Chen and Q.-M. Shao.
\newblock Normal approximation for nonlinear statistics using a concentration
  inequality approach.
\newblock {\em Bernoulli}, 13(2):581--599, 05 2007.

\bibitem[dJ90]{de_jong}
P.~de~Jong.
\newblock A central limit theorem for generalized multilinear forms.
\newblock {\em Journal of Multivariate Analysis}, 34(2):275 -- 289, 1990.

\bibitem[DK92]{dembo_karlin}
A.~Dembo and S.~Karlin.
\newblock Poisson approximations for $r$-scan processes.
\newblock {\em Annals of Applied Probability}, 2(2):329--357, 05 1992.

\bibitem[Don51]{donsker}
M.D. Donsker.
\newblock {An invariance principle for certain probability limit theorems}.
\newblock {\em Memoirs of the American Mathematical Society}, 6, 1951.

\bibitem[DP17]{dobler2017}
Ch. D{\"o}bler and G.~Peccati.
\newblock {Quantitative de Jong theorems in any dimension}.
\newblock {\em Electronic Journal of Probability}, 22:35 pp., 2017.

\bibitem[DR96]{dembo_rinott}
A.~Dembo and Y.~Rinott.
\newblock Some examples of normal approximations by stein's method.
\newblock In D.~Aldous and R.~Pemantle, editors, {\em Random Discrete
  Structures}, pages 25--44, New York, NY, 1996. Springer New York.

\bibitem[EL99]{lowe}
P.~Eichelsbacher and M.~L{\"o}we.
\newblock {Large deviations in partial sums of U-processes}.
\newblock {\em Theory of Probability and Its Applications}, 43(1):26--41, 1999.

\bibitem[Faz14]{fazekas}
I.~Fazekas.
\newblock {On a general approach to the Strong Law of Large Numbers}.
\newblock {\em Journal of Mathematical Sciences}, 200(4):411--423, 2014.

\bibitem[FN10]{ito_processes}
M.~Fischer and G.~Nappo.
\newblock {On the Moments of the Modulus of Continuity of Ito Processes}.
\newblock {\em Stochastic Analysis and Applications}, 28(1):103--122, 2010.

\bibitem[GNW01]{glaz}
J.~Glaz, J.~Naus, and S.~Wallenstein.
\newblock {\em {Scan Statistics}}.
\newblock {Springer Series in Statistics}. {Springer-Verlag New York}, 2001.

\bibitem[Hal79]{hall}
P.~Hall.
\newblock {On the invariance principle for U-statistics}.
\newblock {\em Stochastic Processes and Their Applications}, 9(2):163--174,
  1979.

\bibitem[Hoe48]{hoeffding}
W.~Hoeffding.
\newblock {A Class of Statistics with Asymptotically Normal Distribution}.
\newblock {\em Annals of Mathematical Statistics}, 19(3):293--325, 1948.

\bibitem[Hoe61]{hoeffding2}
W.~Hoeffding.
\newblock {\em {The strong law of large numbers for U-statistics}}.
\newblock Institute of Statistics mimeo series 302. North Carolina State
  University. Dept. of Statistics, 1961.

\bibitem[Jan97]{hilbert}
S.~Janson.
\newblock {\em Gaussian Hilbert Spaces}.
\newblock Cambridge Tracts in Mathematics. Cambridge University Press, 1997.

\bibitem[Kas17]{kasprzak1}
M.J. Kasprzak.
\newblock {Diffusion approximations via Stein's method and time changes}.
\newblock arXiv:1701.07633, 2017.

\bibitem[Kas20]{kasprzak3}
M.J. Kasprzak.
\newblock {Functional approximations with Stein's method of exchangeable
  pairs}.
\newblock {\em Annales de l'Institute Henri Poincar\'e Probabilit\'es et
  Statistiques}, 2020.
\newblock accepted, arXiv:1710.09263.

\bibitem[KB92]{karlub_brede}
S.~Karlin and V.~Brendel.
\newblock {Chance and Statistical Significance in Protein and DNA Sequence
  Analysis}.
\newblock {\em Science}, 257(5066):39--49, 1992.

\bibitem[KDV17]{kasprzak}
M.J. Kasprzak, A.~B. Duncan, and S.J. Vollmer.
\newblock {Note on A. Barbour's paper on Stein's method for diffusion
  approximations}.
\newblock {\em Electronic Communications in Probability}, 22(23):1--8, 2017.

\bibitem[KJ88]{encyclopedia}
S.~Kotz and N.L. Johnson, editors.
\newblock {\em U-statistics}, volume~9 of {\em Encyclopedia of Statistical
  Sciences}, pages 436--444.
\newblock John Wiley and Sons, Inc., 1988.

\bibitem[LRS17]{reinert}
C.~Ley, G.~Reinert, and Y.~Swan.
\newblock {Stein's method for comparison of univariate distributions}.
\newblock {\em Probability Surveys}, 14:1--52, 2017.

\bibitem[Nau82]{naus}
J.~Naus.
\newblock {Approximations for Distributions of Scan Statistics}.
\newblock {\em Journal of the American Statistical Association},
  77(377):177--183, 1982.

\bibitem[NP12]{nourdin}
I.~Nourdin and G.~Peccati.
\newblock {\em Normal Approximations with Malliavin Calculus}.
\newblock Cambridge tracts in Mathematics. Cambridge University Press, 2012.

\bibitem[PY18]{pitman_yor}
J.~Pitman and M.~Yor.
\newblock {A guide to Brownian motion and related stochastic processes}.
\newblock arXiv:1802.09679, 2018.

\bibitem[Ros11]{ross}
N.~Ross.
\newblock {Fundamentals of Stein's Method}.
\newblock {\em Probability Surveys}, 8:210--293, 2011.

\bibitem[RR97]{rinott}
Y.~Rinott and V.~Rotar.
\newblock {On coupling constructions and rates in the CLT for dependent
  summands with applications to the antivoter model and weighted
  $U$-statistics}.
\newblock {\em Annals of Applied Probability}, 7(4):1080--1105, 11 1997.

\bibitem[RR09]{reinert_roellin}
G.~Reinert and A.~R{\"o}llin.
\newblock {Multivariate normal approximation with Stein's method of
  exchangeable pairs under a general linearity condition}.
\newblock {\em Annals of Probability}, 37(6):2150--2173, 2009.

\bibitem[RV80]{rubin_vitale}
H.~Rubin and R.A. Vitale.
\newblock {Asymptotic Distribution of Symmetric Statistics}.
\newblock {\em Annals of Statistics}, 8(1):165--170, 1980.

\bibitem[Ser80]{serfling}
R.J. Serfling.
\newblock {\em {Approximation Theorems of Mathematical Statistics}}.
\newblock Wiley Series in Probability and Statistics. John Wiley and Sons,
  Inc., 1980.

\bibitem[Shi11]{shih}
H.-H. Shih.
\newblock {On Stein's method for infinite-dimensional Gaussian approximation in
  abstract Wiener spaces}.
\newblock {\em Journal of Functional Analysis}, 261(5):1236 -- 1283, 2011.

\bibitem[Ste72]{stein}
Ch. Stein.
\newblock {A bound for the error in the normal approximation to the
  distribution of a sum of dependent random variables}.
\newblock {\em Proc. Sixth Berkeley Symp. on Math. Statist. and Prob.},
  2:583--602, 1972.

\bibitem[Swa16]{swan}
Y.~Swan.
\newblock {A gateway to Stein's Method}.
\newblock \url{https://sites.google.com/site/steinsmethod/home}, 2016.
\newblock Accessed on 19/05/2016.

\bibitem[Vit84]{vitale}
A.R. Vitale.
\newblock {\em An expansion for symmetric statistics and the Efron-Stein
  inequality}, volume~5 of {\em Lecture Notes--Monograph Series}, pages
  112--114.
\newblock Institute of Mathematical Statistics, 1984.

\bibitem[WW40]{wald1940}
A.~Wald and J.~Wolfowitz.
\newblock On a test whether two samples are from the same population.
\newblock {\em Annals of Mathematical Statistics}, 11(2):147--162, 06 1940.

\end{thebibliography}
\end{document}